\newtheorem{theorem}{Theorem}
\newtheorem{lemma}[theorem]{Lemma}
\newtheorem{corollary}[theorem]{Corollary}
\newtheorem{proposition}[theorem]{Proposition}
\newcommand{\ext}{\operatorname{ext}}
\newcommand{\Ext}{\operatorname{Ext}}
\newcommand{\soc}{\operatorname{soc}}
\newcommand{\inv}{^{-1}}
\newcommand{\cO}{\mathcal O}
\newcommand{\co}{\Delta_e/\Delta}
\begin{document}
\title[On first extensions in $\mathcal{S}$-subcategories of $\mathcal{O}$]
{On first extensions in $\mathcal{S}$-subcategories of $\mathcal{O}$}

\author[H.~Ko and V.~Mazorchuk]
{Hankyung Ko and Volodymyr Mazorchuk}

\begin{abstract}
We compute the first  extension group from a simple object to a proper 
standard object and, in some cases, the first  extension group from a  
simple object to a standard object in the principal block of 
an $\mathcal{S}$-subcategory of the BGG category $\mathcal{O}$
associated to a triangular decomposition of a semi-simple 
finite dimensional complex Lie algebra. 
\end{abstract}

\maketitle

\section{Introduction and description of the results}\label{s1}

Bernstein-Gelfand-Gelfand category $\mathcal{O}$ associated to a 
triangular decomposition of a semi-simple finite dimensional complex 
Lie algebra $\mathfrak{g}$ is about half a century old, it originates 
from the classical papers \cite{BGG1,BGG2}. However, it remains
an important and intensively studied object in modern representation 
theory, see \cite{Hu,CM1,CM2,KMM1,KMM2,KMM3,KMM4} for details. 
Category $\mathcal{O}$ has numerous analogues and generalizations, 
which include:
\begin{itemize}
\item parabolic category $\mathcal{O}$, see \cite{RC},
\item $\mathcal{S}$-subcategories in $\mathcal{O}$, see \cite{FKM,MS}.
\end{itemize}

Homological invariants of the above categories carry essential information
about both the structure and the properties of these categories. For category 
$\mathcal{O}$, many homological invariants are explicitly known, see
\cite{Ma1,Ma2,CM1,CM2,KMM1,KMM2,KMM3,KMM4} and references therein.
Using these results, many homological invariants for 
$\mathcal{S}$-subcategories in $\mathcal{O}$, for example, various projective
dimensions, can be computed using the approach of \cite[Section~4]{MPW}, 
especially using \cite[Theorem~15]{MPW}. In the present paper, inspired by the recent
results from \cite{KMM3,KMM4}, we take a closer look at the first extension
space between certain classes of structural object in $\mathcal{S}$-subcategories 
of $\mathcal{O}$.

We completely determine, in type $A$, 
the first extension space from a simple object to a proper
standard object in the regular block of an $\mathcal{S}$-subcategory of $\mathcal{O}$
in Theorems~\ref{fromw0inS} and \ref{thm4-21}. In many special cases (notably both for 
the dominant and the antidominant standard objects), we completely determine the 
first extension space from a simple object to a standard object in the regular 
block of an $\mathcal{S}$-subcategory of $\mathcal{O}$
in Proposition~\ref{prop-singular}. We also obtain some general results which reduce
the problem of computation of the first extension space from a simple object 
to a standard object in an $\mathcal{S}$-subcategory of $\mathcal{O}$
to a similar problem for certain objects in category $\mathcal{O}$, see
Proposition~\ref{prop65}.

The paper is organized as follows: Section~\ref{s2} contains preliminaries
on category $\mathcal{O}$ and its combinatorics. In Section~\ref{s3}
we survey some of the recent results of \cite{KMM3,KMM4} which describe extensions
from a simple highest weight module to a Verma module in category $\mathcal{O}$.
In Section~\ref{s4} we recall the definition and basic properties of 
$\mathcal{S}$-subcategories in $\mathcal{O}$. Section~\ref{s9}
is devoted to explicit description of the first extensions space from
a simple to a proper standard object in $\mathcal{S}$-subcategories in $\mathcal{O}$
in type $A$. We also formulate a number of general results which hold in all types.
In Section~\ref{s6} we similarly look at the first extensions space from
a simple to a standard object. 
We complete the paper with some examples
in Section~\ref{s5}. This includes a detailed $\mathfrak{sl}_3$-example
(for a rank one parabolic) as well as various examples of non-trivial 
extension from a simple to a proper standard object for the algebra $\mathfrak{sl}_4$.

\vspace{2mm}

\subsection*{Acknowledgments}
For the second author, this research is supported by the Swedish
Research Council.

\section{Preliminaries on category $\mathcal{O}$}\label{s2}

\subsection{Category $\mathcal{O}$}\label{s2.1}

Let $\mathfrak{g}$ be a semi-simple finite dimensional complex Lie algebra
with a fixed triangular decomposition 
$\mathfrak{g}=\mathfrak{n}_-\oplus \mathfrak{h}\oplus \mathfrak{n}_+$,
see \cite{Hu,MP} for details. Associated to this datum, we have
the Bernstein-Gelfand-Gelfand category $\mathcal{O}$ defined as the
full subcategory of the category of all finitely generated
$\mathfrak{g}$-modules, consisting of all $\mathfrak{h}$-\-di\-a\-go\-na\-liz\-a\-ble
and locally $U(\mathfrak{n}_+)$-finite modules, cf. \cite{BGG1,BGG2,MP,Hu}.

Simple modules in $\mathcal{O}$ are exactly the simple highest weight modules
$L(\lambda)$, where $\lambda\in \mathfrak{h}^*$,
see  \cite[Chapter~7]{Di} for details. For each such $\lambda$,
we also have in $\mathcal{O}$ the corresponding
\begin{itemize}
\item Verma module $\Delta(\lambda)$,
\item dual Verma module $\nabla(\lambda)$,
\item indecomposable projective module $P(\lambda)$,
\item indecomposable injective module $I(\lambda)$,
\item indecomposable tilting module $T(\lambda)$.
\end{itemize}

Consider the principal block $\mathcal{O}_0$ of $\mathcal{O}$, which is 
defined as the indecomposable direct summand containing the trivial
$\mathfrak{g}$-module $L(0)$. Simple modules in $\mathcal{O}_0$ are
indexed by the elements of the Weyl group $W$ of $\mathfrak{g}$. 
For $w\in W$, we have the corresponding simple module $L_w:=L(w\cdot 0)$,
where $w\cdot{}_-$ denotes the usual dot-action of the Weyl group on
$\mathfrak{h}^*$. We will similarly denote by $\Delta_w$, $\nabla_w$, 
$P_w$, $I_w$ and $T_w$ the other structural modules corresponding to $L_w$.

We will use $\mathrm{Ext}$ and $\mathrm{Hom}$ to denote extensions and
homomorphisms in $\mathcal{O}$, respectively. The simple preserving duality
on $\mathcal{O}$ is denoted by $\star$.

\subsection{Graded category $\mathcal{O}$}\label{s2.2}
The category $\mathcal{O}_0$ admits a $\mathbb{Z}$-graded lift 
$\mathcal{O}_0^{\mathbb{Z}}$, see \cite{So2,St}. All structural modules
in $\mathcal{O}_0$ admit graded lifts (unique up to isomorphism and 
shift of grading). We will use the same notation as for ungraded modules
to denote the following graded lifts of the
structural modules in $\mathcal{O}_0^{\mathbb{Z}}$:
\begin{itemize}
\item $L_w$ denotes the graded simple object concentrated in degree $0$,
\item ${\Delta}_w$ denotes the graded Verma module  with top in degree $0$,
\item $\nabla_w$ denotes the graded dual Verma module with socle
in degree $0$,
\item $P_w$ is the graded indecomposable projective module with top
in degree $0$,
\item $I_w$ is the graded indecomposable injective modules with socle
in degree $0$,
\item $T_w$ is the graded indecomposable tilting module having the unique
$L_w$ subquotient in degree $0$.
\end{itemize}

For $k\in\mathbb{Z}$, we denote by $\langle k\rangle$ the functor which shifts 
the grading by $k$,
with the convention that $\langle 1\rangle$ maps degree $0$ to degree $-1$.
We will use $\mathrm{ext}$ and $\mathrm{hom}$ to denote extensions and
homomorphisms in $\mathcal{O}_0^{\mathbb{Z}}$, respectively. 
Note that, for any $k\geq 0$ and any two structural modules $M$ and 
$N$ with fixed graded lifts $\mathtt{M}$ and $\mathtt{N}$, we have
\begin{displaymath}
\mathrm{Ext}^{k}(M,N)\cong
\bigoplus_{i\in\mathbb{Z}}
\mathrm{ext}^{k}(\mathtt{M},\mathtt{N}\langle i\rangle).
\end{displaymath}

\subsection{Combinatorics of category $\mathcal{O}_0^{\mathbb{Z}}$}\label{s2.3}

Let $\mathbf{H}$ denote the Hecke algebra of $W$ over $\mathbb{Z}[v,v^{-1}]$
in the normalization of \cite{So2}. It has the standard basis
$\{H_w\,:\,w\in W\}$ and the Kazhdan-Lusztig basis $\{\underline{H}_w\,:\,w\in W\}$.
The Kazhdan-Lusztig polynomials $\{p_{x,y}\,:\,x,y\in W\}$ are the entries of 
the transformation matrix between these two bases, that is 
\begin{displaymath}
\underline{H}_y=\sum_{x\in W}p_{x,y}H_x, \text{ for all } y\in W.
\end{displaymath}

Taking the Grothendieck group gives rise to an isomorphism of 
$\mathbb{Z}[v,v^{-1}]$-modules as follows:
\begin{displaymath}
\mathrm{Gr}(\mathcal{O}_0^{\mathbb{Z}}) \cong \mathbf{H},\qquad
[{\Delta}_w]\mapsto H_w, \text{ for } w\in W.
\end{displaymath}
Here the $\mathbb{Z}[v,v^{-1}]$-module structure on 
$\mathrm{Gr}(\mathcal{O}_0^{\mathbb{Z}})$ is given be letting
the element $v$ act as $\langle -1\rangle$.
This isomorphism maps $\mathtt{P}_w$ to $\underline{H}_w$, for $w\in W$.

\subsection{Kazhdan-Lusztig orders and cells}\label{s2.4}

Following \cite{KL}, for $x,y\in W$, we write $x\geq_L y$ provided that
there is $w\in W$ such that $\underline{H}_x$ appears with a non-zero
coefficient in $\underline{H}_w\underline{H}_y$. This defines
the {\em left pre-order} on $W$. The equivalence classes with respect to
this pre-order are called {\em left cells} and the corresponding equivalence
relation is denoted $\sim_L$.

Similarly, for $x,y\in W$, we write $x\geq_R y$ provided that
there is $w\in W$ such that $\underline{H}_x$ appears with a non-zero
coefficient in $\underline{H}_y\underline{H}_w$. This defines
the {\em right pre-order} on $W$. The equivalence classes with respect to
this pre-order are called {\em right cells} and the corresponding equivalence
relation is denoted $\sim_R$.

Finally, for $x,y\in W$, we write $x\geq_J y$ provided that
there are $w,w'\in W$ such that $\underline{H}_x$ appears with a non-zero
coefficient in $\underline{H}_w\underline{H}_y\underline{H}_{w'}$. This defines
the {\em two-sided pre-order} on $W$. The equivalence classes with respect to
this pre-order are called {\em two-sided cells} and the corresponding equivalence
relation is denoted $\sim_J$. 

The two-sided pre-order induces a partial order on the set of the two-sided cells.
The maps $w\mapsto w_0w$ and $w\mapsto ww_0$
induce anti-involution on the poset of two-sided cells, see \cite[Chapter~6]{BB}. 
In particular, the poset of two-sided cells has the minimum element $\{e\}$ and 
the maximum element $\{w_0\}$. In type $A_1$, there is nothing else.
Outside type $A_1$, removing these two extreme cells, we again get a 
poset with the minimum and the maximum element. The new minimum element 
is the cell containing all simple reflections, called the 
{\em small cell} (see \cite{KMMZ}), while the new maximum element is the image of 
the small cell under the $w\mapsto w_0w$ anti-involution (note that the
two new extreme cells coincide in rank $2$). We call
this new maximum cell the {\em penultimate cell} and denote it
by $\mathcal{J}$.

%
%

\section{First extension from a simple to a Verma module
in category $\mathcal{O}$}\label{s3}

In this section, we briefly summarize the results from \cite{Ma1,KMM3,KMM4}
which describe the first extension from a simple module to a Verma module.

\subsection{First extension to a Verma from the anti-dominant simple}\label{s3.1}

We have the usual length function $\ell$ on the Weyl group $W$ considered as a
Coxeter group with respect to the simple reflections determined by our fixed 
triangular decomposition of $\mathfrak{g}$. For $w\in W$, the value
$\ell(w)$ is the length of a reduced expression of $w$. We also have the
content function $\mathbf{c}:W\to\mathbb{Z}_{\geq 0}$. For $w\in W$, 
the value  $\mathbf{c}(w)$ is the number of {\em different} simple 
reflections which appear in a reduced expression of $w$ (from the Coxeter
relations it follows that this number does not depend on the choice of a
reduced expression). 

As usual, we denote by $w_0$ the longest element of $W$.
The following result is \cite[Theorem~32]{Ma1}.

\begin{theorem}\label{thm1}
For $w\in W$ and $i\in\mathbb{Z}$, we have 
\begin{displaymath}
\dim\mathrm{ext}^1(\Delta_{w_0},\Delta_w\langle i\rangle)=
\begin{cases}
\mathbf{c}(w_0w),& i=\ell(w_0)-\ell(w)-2;\\
0,& \text{otherwise}.
\end{cases}
\end{displaymath}
\end{theorem}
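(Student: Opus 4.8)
The plan is to compute $\mathrm{ext}^1(\Delta_{w_0},\Delta_w)$ via a projective resolution of the antidominant Verma module $\Delta_{w_0}$ and then extract the relevant graded piece. Since $\Delta_{w_0}$ is itself projective (it is the antidominant Verma, which coincides with $P_{w_0}$), this naive approach collapses; instead I would work with the \emph{simple} module $L_{w_0}=\Delta_{w_0}$ only if $\mathfrak g$ has rank one, so the honest statement must be that here $\Delta_{w_0}$ is genuinely the Verma module, and one wants $\mathrm{ext}^1$ out of a Verma into a Verma. The cleanest route is duality: $\mathrm{ext}^1(\Delta_{w_0},\Delta_w)\cong\mathrm{ext}^1(\nabla_w,\nabla_{w_0})$ where $\nabla_{w_0}=I_{w_0}$ is injective, so again this vanishes — meaning the content of the theorem is really about the \emph{first} nonvanishing extension and its precise grading. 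Reading \cite[Theorem~32]{Ma1} correctly, the object $\Delta_{w_0}$ in that normalization is the simple module $L(w_0\cdot 0)$ in the rank-one pieces only; in general one uses that $\Delta_{w_0}$ has a two-step structure and computes via the short exact sequence
\begin{displaymath}
0\to K\to P_{w_0}\to \Delta_{w_0}\to 0
\end{displaymath}
identifying $\mathrm{ext}^1(\Delta_{w_0},\Delta_w)$ with a cokernel of $\mathrm{hom}(P_{w_0},\Delta_w)\to\mathrm{hom}(K,\Delta_w)$.

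More concretely, I would proceed as follows. First, recall the BGG resolution (or its Soergel-graded lift): $\Delta_{w_0}$ has a finite projective resolution whose $j$-th term is $\bigoplus_{\ell(x)=\ell(w_0)-j}P_x\langle\,?\,\rangle$ with the correct grading shifts dictated by the isomorphism $\mathrm{Gr}(\mathcal O_0^{\mathbb Z})\cong\mathbf H$, $[\mathtt P_w]\mapsto\underline H_w$. Then $\mathrm{hom}(P_x,\Delta_w\langle i\rangle)$ is computed by the graded multiplicity $[\Delta_w:L_x]_v$, i.e.\ by the Kazhdan–Lusztig polynomial $p_{x,w}$ evaluated appropriately. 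So the whole computation reduces to a statement about the complex of Kazhdan–Lusztig polynomials arising from the BGG resolution differential, and $\mathrm{ext}^1$ is the first cohomology of this complex in each internal degree. The grading constraint $i=\ell(w_0)-\ell(w)-2$ is then forced by the fact that $P_x$ appears in homological degree $\ell(w_0)-\ell(x)$ and the lowest possible nonzero contribution to a first extension sits two steps below the top of $\Delta_{w_0}$; the remaining combinatorial identity to verify is that the Euler-characteristic-style alternating sum of the relevant $p_{x,w}$, restricted to $x$ with $\ell(x)=\ell(w)+1$, collapses to the single monomial with coefficient $\mathbf c(w_0w)$.

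The last point is where the real work lies. The coefficient $\mathbf c(w_0w)$ — the number of distinct simple reflections in a reduced word for $w_0w$ — is not an obvious output of a KL-polynomial computation, so the key lemma will be a combinatorial identity expressing $\mathbf c(w_0w)$ as $\sum_{x\colon \ell(x)=\ell(w)+1}(\text{multiplicity of }L_x\text{ in }\mathrm{rad}\,\Delta_w)$ after the differential is accounted for; equivalently, it counts the covers $w\lessdot x$ in the Bruhat order that survive in the first cohomology. I expect this to follow from the standard fact that $\Delta_w$ has simple socle $L_{w_0}$ and that the second radical layer of $\Delta_w$ is multiplicity-free with composition factors exactly the $L_{sw}$ (or $L_{ws}$) for suitable simple reflections $s$, together with a BGG-reciprocity / parity argument in the graded setting controlling which of these lift to genuine first extensions from $\Delta_{w_0}$.

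The main obstacle will be the bookkeeping of grading shifts in the BGG resolution and verifying that no higher KL-polynomial corrections contribute — i.e.\ that the first-extension computation is entirely governed by the \emph{linear} part of the KL combinatorics (the $\mu$-function / Bruhat covers) and that the deeper coefficients $p_{x,w}$ with $\ell(x)-\ell(w)>1$ play no role in $\mathrm{ext}^1$. Once that purity is established, the identification of the surviving count with $\mathbf c(w_0w)$ is a clean statement about reduced words and simple reflections, provable by induction on $\ell(w_0w)$. Alternatively, and perhaps more efficiently, one can cite the parabolic-induction / translation-functor machinery: $\mathrm{ext}^1(\Delta_{w_0},\Delta_w)$ is computed by applying a sequence of translations-onto-the-wall to the $\mathfrak{sl}_2$-calculation, and each such translation contributes a $+1$ to the dimension precisely when a \emph{new} simple reflection enters $w_0w$ — which is exactly the recursive characterization of $\mathbf c$.
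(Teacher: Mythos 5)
The paper does not prove this statement at all: it is quoted verbatim from \cite[Theorem~32]{Ma1}, so your proposal has to stand on its own, and as written it does not. The opening paragraph contains outright false claims that signal a confusion about which Verma module is which. The \emph{antidominant} Verma $\Delta_{w_0}$ is the \emph{simple} module: $\Delta_{w_0}=L_{w_0}=\nabla_{w_0}$ in every rank, not only in rank one. It is not projective and does not coincide with the big projective $P_{w_0}$ (that is the \emph{dominant} Verma $\Delta_e=P_e$ which is projective); likewise $\nabla_{w_0}\neq I_{w_0}$. So neither of your two ``collapses'' occurs, and the problem is genuinely that of computing $\mathrm{ext}^1$ from the simple module $L_{w_0}$ into $\Delta_w$ --- which is exactly why the statement is nontrivial.

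The resolution strategy in your second paragraph is in principle sound: the minimal projective resolution of $\Delta_{w_0}=L_{w_0}$ does have $j$-th term $\bigoplus_{\ell(x)=\ell(w_0)-j}P_x\langle -j\rangle$ (because $p_{y,w_0}=v^{\ell(w_0)-\ell(y)}$), and $\mathrm{hom}(P_x\langle -j\rangle,\Delta_w\langle i\rangle)$ is a Kazhdan--Lusztig coefficient. But everything you label ``the real work'' is precisely what is missing, and several of the claims you lean on are false. The second radical layer of $\Delta_w$ is \emph{not} multiplicity-free with factors only the $L_{x}$ for Bruhat covers $x\gtrdot w$; it is governed by the $\mu$-function and contains factors $L_x$ with $\ell(x)-\ell(w)$ large whenever $\mu(w,x)\neq 0$. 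The degree constraint $i=\ell(w_0)-\ell(w)-2$ is not ``forced'' by the resolution: already the term $\bigoplus_{\ell(x)=\ell(w_0)-1}\mathrm{hom}(P_x\langle -1\rangle,\Delta_w\langle i\rangle)$ is nonzero in every degree $i=\ell(w_0)-\ell(w)-2-2k$ for which the coefficient of $q^k$ in $P_{w,x}(q)$ is nonzero, and nontrivial $P_{w,x}$ with $\ell(x)\geq\ell(w_0)-2$ occur already in type $A_3$ (e.g.\ $P_{s_2,\,s_2s_1s_3s_2}=1+q$). So the ``purity'' you hope for --- that only the linear part of the KL combinatorics contributes --- is exactly the hard cancellation you would have to prove, and you give no argument for it. Finally, the identification of the surviving count with $\mathbf{c}(w_0w)$ is asserted, not proved; the number of \emph{distinct} simple reflections in $w_0w$ is not the number of Bruhat covers of $w$, and the translation-functor alternative in your last paragraph is a one-sentence hope rather than an argument. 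In short: fix the structural misstatements, and supply the two missing lemmas (vanishing of the cohomology in degrees below $\ell(w_0)-\ell(w)-2$, and the count $\mathbf{c}(w_0w)$ in the top degree); without them the proposal is not a proof.
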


\subsection{First extension to a Verma module from other simple modules and inclusion of Verma modules}\label{ss:socandext}

Recall the following properties of (graded) Verma modules:
\begin{itemize}
\item every non-zero map between two Verma modules is injective;
\item for $x,y\in W$, we have $\mathrm{hom}_{}(\Delta_x,\Delta_y\langle d \rangle)\neq 0$
if and only if $x\geq y$ in the Bruhat order and $d=\ell(x)-\ell(y)$;
\item $\dim \mathrm{Hom}_{}(\Delta_x,\Delta_y)\leq 1$, for all $x,y\in W$.
\end{itemize}
The ungraded versions of these properties can be found in \cite[Chapter~7]{Di}.
The graded version of the second property follows by matching the degrees 
using standard arguments, see e.g. \cite{St}.

In particular, each $\Delta_x\langle -\ell(x)\rangle$ injects to $\Delta_e$, 
and the cokernel $\Delta_e/(\Delta_x\langle -\ell(x)\rangle)$ belongs to 
$\cO_0^\mathbb{Z}$. To ease the notation, we denote the latter by $\co_x$.
These cokernels control the first extension from non-anti-dominant simple 
modules to Verma modules in the following way, as observed in \cite{KMM3}. 

\begin{proposition}\label{extandsoc}
For each $x,w\in W$, with $x\neq w_0$, we have
\begin{displaymath}
\dim\ext^1(L_x\langle d\rangle,\Delta_w\langle -\ell(w) \rangle) 
= [\soc\co_w:L_x\langle d \rangle]. 
\end{displaymath}
\end{proposition}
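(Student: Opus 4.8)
The plan is to relate $\ext^1(L_x\langle d\rangle, \Delta_w\langle -\ell(w)\rangle)$ to the structure of the short exact sequence $\Delta_w\langle -\ell(w)\rangle \hookrightarrow \Delta_e \twoheadrightarrow \co_w$ via the long exact sequence in $\ext^\bullet(L_x\langle d\rangle, -)$. Applying $\hom(L_x\langle d\rangle, -)$ to this sequence yields
\begin{displaymath}
\hom(L_x\langle d\rangle, \Delta_e) \to \hom(L_x\langle d\rangle, \co_w) \to \ext^1(L_x\langle d\rangle, \Delta_w\langle -\ell(w)\rangle) \to \ext^1(L_x\langle d\rangle, \Delta_e).
\end{displaymath}
Since $x\neq w_0$, the simple $L_x$ is not the antidominant simple $L_{w_0}$ (note $\Delta_{w_0}=L_{w_0}$), so it cannot occur in the socle of the Verma module $\Delta_e$, whose socle is $L_{w_0}$; hence $\hom(L_x\langle d\rangle, \Delta_e)=0$. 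It therefore remains to argue that $\ext^1(L_x\langle d\rangle, \Delta_e)=0$, which would force the connecting map $\hom(L_x\langle d\rangle, \co_w) \xrightarrow{\sim} \ext^1(L_x\langle d\rangle, \Delta_w\langle -\ell(w)\rangle)$ to be an isomorphism, and then to identify $\dim\hom(L_x\langle d\rangle, \co_w)$ with the socle multiplicity $[\soc\co_w : L_x\langle d\rangle]$, which is automatic since $\hom(L_x\langle d\rangle, \co_w)$ counts copies of $L_x\langle d\rangle$ in the socle of $\co_w$.

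For the vanishing $\ext^1(L_x\langle d\rangle, \Delta_e)=0$: since $\Delta_e$ is the dominant (``big'') Verma module, it is projective in $\cO_0$ (it equals $P_e$), but projectivity only gives vanishing of $\ext^1$ out of $\Delta_e$, not into it. What we actually want is that $\Delta_e$ has no self-extensions by simples from below, i.e. $\ext^1(L_x\langle d\rangle, \Delta_e)=0$ for all $x$. This follows because $\Delta_e$ is also the injective hull of $L_{w_0}$ restricted appropriately — more precisely, one uses that $\Delta_e = \nabla_e^\star$ has simple socle and, dually, $\ext^1(L_x, \Delta_e)$ would detect a nonsplit extension $0\to\Delta_e\to E\to L_x\to 0$; pushing through the duality $\star$ this becomes $\ext^1(L_x, \nabla_e)$, and dual Verma modules are known to have no first extensions from simple modules (equivalently, the costandard object $\nabla_e$ has a simple socle and $\ext^1(L_x,\nabla_e)$ vanishes because $\nabla_e$ is the injective hull of $L_{w_0}$ in... ). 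Actually the cleanest route: $\Delta_e$ is projective, hence $\Delta_e = P_e$, and $P_e$ is also injective (it is the big projective-injective in the regular block); therefore $\ext^1(-, \Delta_e) = \ext^1(-, I_{w_0}) = 0$ as $\Delta_e$ is injective. This is the key structural input and I would cite it from \cite{So2} or \cite{Hu}.

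The main obstacle, then, is not the homological algebra — the long exact sequence argument is routine once the two vanishing statements are in hand — but rather pinning down precisely \emph{why} $\Delta_e$ is injective in $\cO_0^{\mathbb{Z}}$ with the correct grading shift so that the isomorphism respects the grading, and confirming that no grading shift discrepancy enters the identification $\hom(L_x\langle d\rangle, \co_w) = [\soc\co_w : L_x\langle d\rangle]$. One must check that $\Delta_e$, with top in degree $0$, coincides with $I_{w_0}$ up to a grading shift by $\ell(w_0)$ (since $I_{w_0}$ has socle $L_{w_0}$ in degree $0$ while $\Delta_e$ has its $L_{w_0}$-subquotient in degree $\ell(w_0)$), and that $\ext^1$ into a shifted injective still vanishes — which it does, grading shifts being harmless. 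With these points settled, the proof is complete: the connecting homomorphism is a graded isomorphism, and its source computes exactly the graded socle multiplicity of $L_x$ in $\co_w$, giving the claimed identity.
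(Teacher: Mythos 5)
Your long-exact-sequence framework is sound, and it is essentially the approach the paper takes (via the proof of Proposition~\ref{extandsocinS}): both reduce to showing $\hom(L_x\langle d\rangle,\Delta_e)=0$ and $\ext^1(L_x\langle d\rangle,\Delta_e)=0$ for $x\neq w_0$, after which $\hom(L_x\langle d\rangle,\co_w)$ computes the extension group and, for a simple $L_x\langle d\rangle$, equals the socle multiplicity. The first vanishing is correct as you wrote it. The problem is entirely in your justification of the second vanishing, which is wrong at its core.

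You claim that $\Delta_e$ is injective because "$\Delta_e=P_e$, and $P_e$ is also injective (it is the big projective-injective in the regular block)." This is false: the big projective-injective of $\mathcal{O}_0$ is $P_{w_0}=I_{w_0}$, not $P_e$. The modules $P_e=\Delta_e$ and $P_{w_0}$ have different characters (for instance, $[P_{w_0}]=\sum_{y\in W}[\Delta_y]$ while $[P_e]=[\Delta_e]$), so they cannot coincide even up to a grading shift; $\Delta_e$ is projective but not injective whenever $W\neq\{e\}$. Your fallback via the duality $\star$ is also misapplied: $\star$ is contravariant, so $\ext^1(L_x,\Delta_e)\cong\ext^1(\Delta_e^\star,L_x^\star)=\ext^1(\nabla_e,L_x)$, not $\ext^1(L_x,\nabla_e)$, and the general assertion that costandard objects receive no first extensions from simples is false (e.g. $\ext^1(L_x,\nabla_y)\neq 0$ for suitable $x>y$). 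So neither route you offer actually establishes $\ext^1(L_x\langle d\rangle,\Delta_e)=0$.

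The correct input, and the one the paper uses (phrased for $\mathcal{S}$ in Proposition~\ref{extandsocinS}), is the following. The injective hull of $L_{w_0}\langle-\ell(w_0)\rangle$ is $I_{w_0}\langle-\ell(w_0)\rangle=P_{w_0}\langle-\ell(w_0)\rangle$, a tilting module whose Verma flag has $\Delta_e$ as its bottom submodule. The cokernel $C$ of $\Delta_e\hookrightarrow I_{w_0}\langle-\ell(w_0)\rangle$ therefore again has a Verma flag, so $\soc C$ is a direct sum of shifts of $L_{w_0}$, and $\hom(L_x\langle d\rangle,C)=0$ for $x\neq w_0$. Applying $\hom(L_x\langle d\rangle,-)$ to $0\to\Delta_e\to I_{w_0}\langle-\ell(w_0)\rangle\to C\to 0$ and using $\ext^1(L_x\langle d\rangle,I_{w_0}\langle-\ell(w_0)\rangle)=0$ (this is the genuine injective) yields $\ext^1(L_x\langle d\rangle,\Delta_e)\cong\hom(L_x\langle d\rangle,C)=0$. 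With this fix, the rest of your argument is complete and gives the same proof as the paper's.
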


The proof is similar to the second part of \cite[Proof of Corollary 2]{KMM3}. 
A similar argument will also be given in Proposition~\ref{extandsocinS}.

The rest of this section describes the cokernels $\co_w$, for $w\in W$. 
To do this, we need to dive into poset-theoretic properties of the Bruhat order. 
An element $w\in W$ is called {\em join-irreducible} provided that it is not a 
join (supremum) of other elements, that is, there is no $U\subset W$ 
with $w\not\in U$ such that $w = \bigvee U$. The set of all join-irreducible 
elements, denoted by $\mathbf{B}$, is called the \emph{base} of the poset $W$.

\subsection{Cokernel of inclusion between Verma modules in type $A$}\label{s3.2}

In a few coming subsections we restrict to the case of type $A$.
The join-irreducible elements in $W$ of type $A$ are explicitly
identified in  \cite{LS} as the bigrassmannian elements. 
An element $w\in W$ is called {\em bigrassmannian} provided that there is
a unique simple reflection $s$ such that $\ell(sw)<\ell(w)$ and there is
a unique simple reflection $t$ such that $\ell(w)<\ell(wt)$. 
In type $A$, the base $\mathbf{B}$ agrees 
with the set of bigrassmannian elements in $W$.

The Kazhdan-Lusztig two-sided order is also easier in type $A$. 
The classical Robinson-Schensted correspondence
\begin{displaymath}
\mathrm{RS}:S_n\longrightarrow \coprod_{\lambda\vdash n}
\mathbf{SYT}_\lambda\times \mathbf{SYT}_\lambda
\end{displaymath}
assigns to $w\in W$ a pair $\mathrm{RS}(w)=(p_w,q_w)$ of 
standard Young tableaux of shape $\lambda=:\mathrm{sh}(w)$, where 
$\lambda$ is a partition of $n$, see \cite[Section~3.1]{Sa}. 
By \cite[Theorem~1.4]{KL}, we have
\begin{itemize}
\item $x\sim_L y$ if and only if $q_x=q_y$;
\item $x\sim_R y$ if and only if $p_x=p_y$;
\item $x\sim_J y$ if and only if $\mathrm{sh}(x)=\mathrm{sh}(y)$.
\end{itemize}
The poset of all two-sided cells with respect to the two-sided order 
is isomorphic to the poset of all partitions of $n$ with respect to
the dominance order, see \cite{Ge}. 

Recall that $\mathcal J$ denotes the penultimate cell with respect to the two-sided order.
In type $A$, the elements in $\mathcal{J}$ are naturally indexed by pairs of
simple reflections in $W$: for any pair $(s,t)$ of simple reflections in
$W$, there is a unique element $w_{s,t}\in \mathcal{J}$ such that 
$w_0=sw_{s,t}=w_{s,t}t$.

We now formulate the main result of \cite{KMM3}, that is
\cite[Theorem~1]{KMM3}.
\begin{theorem}\label{thm2}
{\hspace{1mm}}

\begin{enumerate}[$($i$)$]
\item\label{thm2.1} For $w\in S_n$, the module $\Delta_e/\Delta_w$
has simple socle if and only if $w\in \mathbf{B}$.
\item\label{thm2.2} The map $\mathbf{B}\ni w\mapsto \mathrm{soc}(\Delta_e/\Delta_w)$
induces a bijection between $\mathbf{B}$ and simple subquotients of
$\Delta_e$ of the form $L_x$, where $x\in\mathcal{J}$.
\item\label{thm2.3} For $w\in S_n$, the simple subquotients of 
$\Delta_e/\Delta_w$ of the form $L_x$, where $x\in\mathcal{J}$, correspond, under the
bijection from \eqref{thm2.2}, to $y\in \mathbf{B}$ such that $y\leq w$.
\item\label{thm2.4} For $w\in S_n$, the socle of $\Delta_e/\Delta_w$ 
consists of all $L_x$, where $x\in\mathcal{J}$, which correspond, under the
bijection from \eqref{thm2.2}, to the Bruhat maximal elements in
the set $\{y\in \mathbf{B}\,:\, y\leq w\}$.
\end{enumerate}
\end{theorem}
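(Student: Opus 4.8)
\textbf{Proof strategy for Theorem~\ref{thm2}.}
The plan is to understand the module $\Delta_e/\Delta_w$ via the combinatorics of the Bruhat order on $S_n$ and the localization that inclusions of Verma modules provide. First I would record the basic dictionary: by Subsection~\ref{ss:socandext}, the submodules of $\Delta_e$ are exactly the $\Delta_x\langle -\ell(x)\rangle$ for $x\in W$, with $\Delta_x\langle-\ell(x)\rangle\subseteq\Delta_y\langle-\ell(y)\rangle$ if and only if $x\geq y$; in particular $\soc\Delta_e/\Delta_w$ records which $\Delta_x$ are minimal among those not contained in $\Delta_w$, equivalently which $x$ are Bruhat-maximal among $\{x\,:\,x\not\geq w\}$. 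The key observation is that a composition factor $L_x$ of $\Delta_e$ sits in the socle of $\Delta_e/\Delta_w$ precisely when the corresponding submodule generated by (a lift of) $L_x$ is a simple Verma-type submodule, and this happens exactly for $x\in\mathcal J$: the penultimate two-sided cell consists of those $x$ with $\ell(w_0)-\ell(x)$ minimal possible among non-$w_0$ elements, so that $\Delta_x\langle-\ell(x)\rangle$ has no proper Verma submodule meeting the relevant layer, making it ``socle-like'' inside $\Delta_e$.

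The core of the argument is then part~\eqref{thm2.2}, the bijection between the base $\mathbf B$ and the simple subquotients of $\Delta_e$ of the form $L_x$ with $x\in\mathcal J$. Here I would invoke the identification, due to \cite{LS}, of $\mathbf B$ with the bigrassmannian elements of $S_n$, together with the combinatorial description of $\mathcal J$ from Subsection~\ref{s3.2}: the elements $w_{s,t}$ indexed by ordered pairs $(s,t)$ of simple reflections. A bigrassmannian $y$ is determined by its unique left and right descents plus one more numerical parameter (the ``corner'' of its associated monotone triangle / the pair $(i,j)$ with $i$ in the window), and I would set up an explicit assignment $y\mapsto w_{s,t}$ where $s$ and $t$ are read off from the descents of $y$, checking that $L_{w_{s,t}}$ does occur in $\Delta_e$ with the right multiplicity (multiplicity one) by a Kazhdan--Lusztig computation: $[\Delta_e:L_x] = \dim\mathrm{ext}^\bullet$-type data given by $p_{x,w_0}$-adjacent polynomials, which for $x\in\mathcal J$ reduces to a $0/1$ count matching the bigrassmannian count. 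Counting on both sides (the number of bigrassmannian elements equals the number of simple subquotients $L_x$, $x\in\mathcal J$, of $\Delta_e$) upgrades the injection to a bijection.

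With \eqref{thm2.2} in hand, parts \eqref{thm2.1}, \eqref{thm2.3} and \eqref{thm2.4} should follow by a uniform ``Bruhat interval'' argument. For \eqref{thm2.3}: a factor $L_x$ with $x\in\mathcal J$ survives in the quotient $\Delta_e/\Delta_w$ iff it is not already a factor of $\Delta_w\langle-\ell(w)\rangle$; translating through the bijection and the submodule lattice of $\Delta_e$, this is exactly the condition that the corresponding $y\in\mathbf B$ satisfies $y\leq w$, since $y\leq w$ characterizes when the $\Delta_y$-generated piece is ``below'' $\Delta_w$. For \eqref{thm2.4}: among the surviving $L_x$'s, those in the socle are the ones whose generating submodule in $\Delta_e/\Delta_w$ is minimal, which by the order-reversing correspondence between $x\in\mathcal J$ and $y\in\mathbf B$ (the anti-involution $w\mapsto w_0w$ relating $\mathcal J$ to the small cell, and the fact that larger $y$ gives a submodule realized ``deeper'') corresponds to $y$ being Bruhat-maximal in $\{y\in\mathbf B\,:\,y\leq w\}$. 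Finally \eqref{thm2.1} is the special case: $\Delta_e/\Delta_w$ has simple socle iff $\{y\in\mathbf B\,:\,y\leq w\}$ has a unique maximal element; since every $w$ is a join of the join-irreducibles below it, this set has a unique maximal element iff $w$ itself is join-irreducible, i.e. $w\in\mathbf B$.

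The main obstacle I anticipate is establishing the order-reversing (or order-preserving, depending on normalization) compatibility in part~\eqref{thm2.2} with sufficient precision: one must show not merely that $\mathbf B$ and $\{L_x:x\in\mathcal J\}$-factors of $\Delta_e$ have the same cardinality, but that the natural map is compatible with the Bruhat order on $\mathbf B$ on one side and the submodule order (equivalently, the order induced from $\mathcal J$) on the other, so that ``maximal below $w$'' on the $\mathbf B$-side really does match ``socle of the quotient'' on the module side. This requires pinning down exactly which $\Delta_x$-submodule of $\Delta_e$ a given bigrassmannian $y$ produces, which I expect to do by a direct reduced-expression / monotone-triangle bookkeeping together with the Kazhdan--Lusztig combinatorics that forces multiplicity one in the penultimate cell; the representation-theoretic input (Proposition~\ref{extandsoc} and the submodule lattice of $\Delta_e$) is then essentially formal.
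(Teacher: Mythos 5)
First, note that the paper does not prove Theorem~\ref{thm2} at all: it is imported verbatim as \cite[Theorem~1]{KMM3}, so there is no internal argument to compare against. Your sketch is therefore an attempt to reprove a substantial external result, and it contains several concrete errors that sink the strategy. The opening reduction --- that the submodules of $\Delta_e$ are exactly the $\Delta_x\langle-\ell(x)\rangle$, so that $\soc(\Delta_e/\Delta_w)$ is read off from the Bruhat-maximal $x$ with $x\not\geq w$ --- is false for $n\geq 4$ and, more importantly, is structurally incompatible with the statement being proved: if the socle of $\Delta_e/\Delta_w$ were governed by minimal Verma submodules not contained in $\Delta_w$, the image of such a $\Delta_x$ in the quotient would be $\Delta_x/(\Delta_x\cap\Delta_w)$, which is neither simple nor of the form $L_x$ with $x\in\mathcal{J}$; indeed $\Delta_{w_0}\subseteq\Delta_w$ always, so no Verma submodule survives intact. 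The whole point of \cite{KMM3} is that the socle is \emph{not} visible in the lattice of Verma submodules. Likewise, your characterization of $\mathcal{J}$ as the elements of near-maximal length is wrong (already in $S_3$ the penultimate cell contains the simple reflections), so the heuristic that $L_x$, $x\in\mathcal{J}$, generates a ``socle-like'' piece has no content; the actual input here is the nontrivial vanishing result $\Ext^1(L_x,\Delta_w)=0$ for $x\notin\mathcal{J}\cup\{w_0\}$ (\cite[Proposition~3]{KMM3}), which requires graded/Koszul-type arguments.

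The central step (part~(ii)) also fails as proposed. You assert that each $L_{w_{s,t}}$ occurs in $\Delta_e$ with multiplicity one and that a count of both sides yields the bijection. Already for $S_4$ one has $|\mathbf{B}|=10$ while $|\mathcal{J}|=9$, so some $L_x$ with $x\in\mathcal{J}$ must occur with multiplicity $2$ in $\Delta_e$ (this is exactly the famous nontrivial Kazhdan--Lusztig polynomial for $\mathfrak{sl}_4$, used in Subsection~\ref{s5.2}). The bijection in (ii) is between $\mathbf{B}$ and graded composition factors counted with multiplicity, and establishing that $\sum_{x\in\mathcal{J}}[\Delta_e:L_x]=|\mathbf{B}|$, together with the degree bookkeeping that makes the correspondence order-compatible, is precisely the hard Kazhdan--Lusztig-theoretic content of \cite{KMM3}; it cannot be replaced by a $0/1$ count. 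Since parts (i), (iii), (iv) are then derived in your sketch from the flawed submodule-lattice picture, they inherit the gap (your purely order-theoretic deduction of (i) from (iv), via the Lascoux--Sch\"utzenberger fact that every $w$ is the join of the bigrassmannians below it, is the one piece that would survive once (iv) is actually established).
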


Motivated by the last claim, we denote 
$\mathbf{BM}(w):=\max \{y\in \mathbf{B}\,:\, y\leq w\}$.

The socle of the cokernel of an inclusion between two arbitrary Verma modules
can be described using Theorem~\ref{thm2}. The following corollary
is \cite[Corollary~23]{KMM3}.

\begin{corollary}\label{corthm2}
Let $v,w\in S_n$ be such that $v<w$.
\begin{enumerate}[$($i$)$]
\item\label{corthm2.1} The bijection from Theorem~\ref{thm2}\eqref{thm2.2} induces
a bijection between simple subquotients of 
$\Delta_v/\Delta_w$ of the form $L_x$, where $x\in\mathcal{J}$, 
and $y\in \mathbf{B}$ such that $y\leq w$ and $y\not\leq v$.
\item\label{corthm2.2} The socle of $\Delta_v/\Delta_w$ 
consists of all $L_x$, where $x$ corresponds to an element in $\mathbf{BM}(w)\setminus \mathbf{BM}(v)$.
\end{enumerate}
\end{corollary}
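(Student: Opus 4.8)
The plan is to reduce everything to Theorem~\ref{thm2} via the short exact sequence relating the three Verma quotients. Since $v<w$, we have inclusions $\Delta_w\langle -\ell(w)\rangle \hookrightarrow \Delta_v\langle -\ell(v)\rangle \hookrightarrow \Delta_e$, and hence, after the appropriate grading shifts, a short exact sequence
\begin{displaymath}
0\longrightarrow \Delta_v/\Delta_w\longrightarrow \Delta_e/\Delta_w\longrightarrow \Delta_e/\Delta_v\longrightarrow 0.
\end{displaymath}
For part \eqref{corthm2.1}, I would take composition-factor multiplicities in this sequence: a simple $L_x$ with $x\in\mathcal{J}$ appears in $\Delta_v/\Delta_w$ with multiplicity equal to its multiplicity in $\Delta_e/\Delta_w$ minus that in $\Delta_e/\Delta_v$. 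By Theorem~\ref{thm2}\eqref{thm2.3}, the former counts the $y\in\mathbf{B}$ with $y\le w$ that correspond to $L_x$ under the bijection of \eqref{thm2.2}, and the latter counts those with $y\le v$. Since that bijection is injective, the difference is exactly the set of $y\in\mathbf{B}$ with $y\le w$ and $y\not\le v$, which is the claim. (One should note, as in Theorem~\ref{thm2}, that the multiplicity of each such $L_x$ in $\Delta_e$ is at most one, so "multiplicity" and "occurs as a subquotient" coincide here; this is implicit in the phrasing of \eqref{thm2.2}.)

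For part \eqref{corthm2.2}, the key structural input is that $\Delta_e/\Delta_w$ is a submodule-free-of-$L_x$-in-higher-Loewy-layers only at the socle — more precisely, that every $L_x$ with $x\in\mathcal{J}$ occurring in $\Delta_e/\Delta_w$ already occurs in the socle of $\Delta_e$ when one only remembers the $\mathcal{J}$-part. The cleanest route is: the socle of $\Delta_e/\Delta_w$ consists only of modules $L_x$ with $x\in\mathcal{J}$ (this is part of the content of Theorem~\ref{thm2}, since $\soc(\Delta_e/\Delta_w)$ is described entirely in terms of $\mathcal{J}$), and the left exactness of $\soc$ applied to the sequence above gives $\soc(\Delta_v/\Delta_w)\hookrightarrow \soc(\Delta_e/\Delta_w)$. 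Thus an $L_x$ lies in $\soc(\Delta_v/\Delta_w)$ only if $x$ corresponds to some $y\in\mathbf{BM}(w)$; combined with \eqref{corthm2.1}, such a $y$ must additionally satisfy $y\not\le v$. Conversely, if $y\in\mathbf{BM}(w)$ and $y\not\le v$, then $L_x$ occurs in $\Delta_v/\Delta_w$ by \eqref{corthm2.1}; to see it lies in the socle, I would use that the socle of the ambient $\Delta_e/\Delta_w$ is semisimple with the $L_x$, $x$ corresponding to $\mathbf{BM}(w)$, each appearing once, so the copy of $L_x$ inside $\Delta_e/\Delta_w$ is the socle copy, and it survives in the submodule $\Delta_v/\Delta_w$ because $L_x$ does occur there. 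Finally one checks $\mathbf{BM}(w)\setminus\mathbf{BM}(v)$ is exactly $\{y\in\mathbf{BM}(w): y\not\le v\}$: if $y\in\mathbf{BM}(w)$ and $y\le v$ then $y\in\{z\in\mathbf{B}:z\le v\}$ and, being already maximal among $\{z\in\mathbf{B}:z\le w\}\supseteq\{z\in\mathbf{B}:z\le v\}$, it is maximal in the smaller set too, so $y\in\mathbf{BM}(v)$; the reverse inclusion is immediate.

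The main obstacle I anticipate is the converse direction of \eqref{corthm2.2}: showing that an $L_x$ which occurs in $\Delta_v/\Delta_w$ and corresponds to a Bruhat-maximal $y$ actually sits in the socle of the subquotient $\Delta_v/\Delta_w$, rather than merely somewhere in its radical layers. The argument sketched above leans on the fact that the relevant copy of $L_x$ in $\Delta_e/\Delta_w$ is genuinely in the socle and that passing to a submodule cannot move a socle constituent out of the socle; making this rigorous requires knowing that $L_x$ has multiplicity exactly one in $\Delta_e/\Delta_w$ (so there is no ambiguity about "the" copy), which is part of Theorem~\ref{thm2}, together with the observation that a simple submodule of $\Delta_e/\Delta_w$ isomorphic to $L_x$ must be the socle copy. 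Everything else is bookkeeping with the bijection from Theorem~\ref{thm2}\eqref{thm2.2} and the elementary poset identity for $\mathbf{BM}$.
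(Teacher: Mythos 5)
Your overall strategy --- embedding $\Delta_v/\Delta_w$ into $\Delta_e/\Delta_w$ with quotient $\Delta_e/\Delta_v$, reading off multiplicities for part \eqref{corthm2.1}, and using $\soc(\Delta_v/\Delta_w)\subseteq\soc(\Delta_e/\Delta_w)$ together with a ``unique copy'' argument for part \eqref{corthm2.2} --- is the right one; note that the paper does not prove this statement itself but cites \cite[Corollary~23]{KMM3}, whose proof runs along exactly these lines. Your poset identity $\mathbf{BM}(w)\setminus\mathbf{BM}(v)=\{y\in\mathbf{BM}(w):y\not\leq v\}$ is also correct and needed.

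However, the supporting claim you lean on twice --- that each $L_x$ with $x\in\mathcal{J}$ occurs in $\Delta_e$ (hence in $\Delta_e/\Delta_w$) with multiplicity at most one, and that this is ``part of Theorem~\ref{thm2}'' --- is false as stated and is not what Theorem~\ref{thm2}\eqref{thm2.2} says. The bijection in \eqref{thm2.2} is between $\mathbf{B}$ and the \emph{occurrences} of such simples: all elements of a given ${}_i\mathbf{B}_j$ map under $\Phi$ to the same $x=w_{s_i,s_j}$, so for instance the three elements of ${}_4\mathbf{B}_3$ in $S_7$ give $[\Delta_e:L_{w_{s_4,s_3}}]=3$. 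This matters precisely at the point you flag as the main obstacle: to conclude that the copy of $L_x$ appearing in $\Delta_v/\Delta_w$ is the socle copy of $\Delta_e/\Delta_w$, you need to distinguish the several copies. The repair is to work in $\mathcal{O}_0^{\mathbb{Z}}$: by the gradability of Theorem~\ref{thm2} (\cite[Proposition~22]{KMM3}, invoked before Proposition~\ref{cor2graded}), the occurrences of $L_x$ indexed by distinct elements of $\Phi^{-1}(x)\cap\mathbf{B}$ sit in pairwise distinct degrees, so $[\Delta_e/\Delta_w:L_x\langle d\rangle]\leq 1$ for each fixed $d$. Running your multiplicity and socle arguments degree by degree (a simple submodule of $N\subseteq M$ with $[M:L\langle d\rangle]=1$ and $L\langle d\rangle\subseteq\soc M$, $[N:L\langle d\rangle]=1$ forces $L\langle d\rangle\subseteq\soc N$, since otherwise $L\langle d\rangle$ would occur in both $N$ and $M/N$) then closes the gap, and the ungraded statement follows by forgetting the grading.
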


\subsection{First extension to a Verma from other simples in type $A$}\label{s3.3}

Let $w\in \mathbf{B}$ be such that $\ell(sw)<\ell(w)$ and $\ell(wt)<\ell(w)$,
for two simple reflections $s$ and $t$. Denote by $\Phi: \mathbf{B}\to \mathcal{J}$ 
the map which sends such $w$ to $w_{s,t}$. Theorem~\ref{thm2} and Proposition~\ref{extandsoc} has the following 
consequence:

\begin{corollary}\label{cor2}
Let $x,y\in S_n$ with $x\neq w_0$. Then we have
\begin{displaymath}
\dim\mathrm{Ext}_{}^{1}(L_x,\Delta_y)=
\dim\mathrm{Ext}_{}^{1}(\nabla_y,L_x)=
\begin{cases}
1, & x\in \Phi(\mathbf{BM}(y));\\
0, & \text{otherwise}. 
\end{cases}
\end{displaymath}
\end{corollary}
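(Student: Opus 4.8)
The plan is to deduce the statement from Theorem~\ref{thm2} and Proposition~\ref{extandsoc}, taking advantage of the identification of the penultimate cell $\mathcal{J}$ with pairs of simple reflections. First I would apply Proposition~\ref{extandsoc} with $w=y$ (noting $x\neq w_0$ is the hypothesis needed): this gives
\begin{displaymath}
\dim\ext^1(L_x\langle d\rangle,\Delta_y\langle -\ell(y)\rangle)=[\soc\co_y:L_x\langle d\rangle],
\end{displaymath}
so after summing over all grading shifts one obtains $\dim\Ext^1(L_x,\Delta_y)=[\soc\co_y:L_x]$ in the ungraded category. Then Theorem~\ref{thm2}\eqref{thm2.4} identifies $\soc\co_y=\soc(\Delta_e/\Delta_y)$ as the direct sum of the simples $L_x$ with $x\in\mathcal{J}$ corresponding, under the bijection of Theorem~\ref{thm2}\eqref{thm2.2}, to the elements of $\mathbf{BM}(y)=\max\{z\in\mathbf{B}:z\leq y\}$. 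By construction, the composite of the bijection $\mathbf{B}\to\{$socle simples$\}$ from Theorem~\ref{thm2}\eqref{thm2.2} with the labelling $L_x\mapsto x$ is precisely the map $\Phi:\mathbf{B}\to\mathcal{J}$ sending a bigrassmannian $w$ with $\ell(sw),\ell(wt)<\ell(w)$ to $w_{s,t}$ — this should be checked against the explicit description in \cite{KMM3}, and it is the one place where a small compatibility verification is required. Consequently $[\soc\co_y:L_x]=1$ exactly when $x\in\Phi(\mathbf{BM}(y))$ and is $0$ otherwise, giving the first equality (each multiplicity is at most one since $\co_y$ has multiplicity-free socle by Theorem~\ref{thm2}\eqref{thm2.1} when $y\in\mathbf{B}$, and more generally the socle simples are distinct).

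For the second equality, I would apply the simple-preserving duality $\star$ on $\mathcal{O}$. Since $\star$ is an exact contravariant self-equivalence fixing simple modules and sending $\Delta_y$ to $\nabla_y$, it induces isomorphisms $\Ext^1(L_x,\Delta_y)\cong\Ext^1(\nabla_y,L_x)$. Hence the two extension spaces have the same dimension, and the right-hand case distinction is inherited verbatim from the first part.

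The main obstacle, such as it is, is not conceptual but bookkeeping: one must confirm that the combinatorial map arising from Theorem~\ref{thm2}\eqref{thm2.2} (socle of $\Delta_e/\Delta_w$ for $w\in\mathbf{B}$) coincides with the map $\Phi$ defined via $w_0=sw_{s,t}=w_{s,t}t$. Everything else is a direct concatenation of results already available: Proposition~\ref{extandsoc} converts $\Ext^1$ into a socle multiplicity, Theorem~\ref{thm2}\eqref{thm2.4} computes that socle in terms of $\mathbf{BM}(y)$, and duality transfers the computation to $\Ext^1(\nabla_y,L_x)$.
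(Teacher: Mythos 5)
Your proposal is correct and follows essentially the same route the paper has in mind: the paper states Corollary~\ref{cor2} as an immediate consequence of Proposition~\ref{extandsoc} and Theorem~\ref{thm2}, and your proof is exactly the concatenation of those two results (convert $\Ext^1$ to a socle multiplicity, compute the socle via Theorem~\ref{thm2}\eqref{thm2.4}, and transfer to $\Ext^1(\nabla_y,L_x)$ by the duality $\star$). The compatibility you flag — that the labelling of socle simples in Theorem~\ref{thm2}\eqref{thm2.2} agrees with $\Phi$ — is indeed the one implicit ingredient, and it is the way $\Phi$ is set up in \cite{KMM3}, so the argument is complete.
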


\subsection{Extensions in singular blocks in type $A$}\label{s3.35}

Let $\lambda$ be a dominant integral weight and $\mathcal{O}_\lambda$
the indecomposable summand of $\mathcal{O}$ containing $\lambda$.
If $\lambda$ is regular, then $\mathcal{O}_\lambda$ is equivalent to
$\mathcal{O}_0$. In the general case, denote by $W^\lambda$ the 
stabilizer of $\lambda$ with respect to the dot action of $W$.
Simple objects in $\mathcal{O}_\lambda$ are then in a natural bijection
with the cosets in $W/{}_{W^\lambda}$.

For $w\in W$ denote by $\overline{w}$ the unique longest element
in $wW^\lambda$. Also, denote by $\underline{w}$ the unique shortest 
element in $wW^\lambda$. The following claim is \cite[Theorem~16]{KMM3}:

\begin{theorem}\label{thm75}
Let $x,y\in S_n$ and let $\mu$ be an integral, dominant weight. Then we have
\begin{displaymath}
\dim\mathrm{Ext}_{\cO}^{1}(L(x\cdot \lambda),\Delta(y\cdot \lambda))=
\begin{cases}
\mathbf{c}(\overline{x}\underline{y})-\mathrm{rank}(W^\lambda), & \overline{x}=w_0;\\
1, & \overline{x}\in \Phi(\mathbf{BM}(\underline{y}));\\
0, & \text{otherwise}. 
\end{cases}
\end{displaymath}
\end{theorem}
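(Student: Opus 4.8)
The plan is to reduce the statement to the regular case (Theorems~\ref{thm1} and~\ref{cor2}) via the translation functors connecting $\cO_0$ and $\cO_\lambda$. Write $T^{\mathrm{on}}_\lambda\colon\cO_0\to\cO_\lambda$ and $T^{\mathrm{out}}_\lambda\colon\cO_\lambda\to\cO_0$ for translation on and out of the wall. Recall the standard facts: $T^{\mathrm{on}}_\lambda\Delta_y\cong\Delta(y\cdot\lambda)$ for every $y$, and $T^{\mathrm{out}}_\lambda\Delta(y\cdot\lambda)\cong\Delta_{\underline{y}}$ (with a suitable graded shift), since translating out of the wall sends a standard object to the standard object at the shortest representative; dually, $T^{\mathrm{on}}_\lambda L_x\cong L(x\cdot\lambda)$ if $\overline{x}=x$ (i.e.\ $x$ is the longest in its coset) and $T^{\mathrm{on}}_\lambda L_x=0$ otherwise. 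The pair $(T^{\mathrm{on}}_\lambda,T^{\mathrm{out}}_\lambda)$ is biadjoint up to shift, so for any $M\in\cO_0$, $N\in\cO_\lambda$ one has $\mathrm{Ext}^1_{\cO_\lambda}(T^{\mathrm{on}}_\lambda M,N)\cong\mathrm{Ext}^1_{\cO_0}(M,T^{\mathrm{out}}_\lambda N)$.

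The key computation is then
\[
\mathrm{Ext}^1_{\cO_\lambda}\bigl(L(x\cdot\lambda),\Delta(y\cdot\lambda)\bigr).
\]
First I would treat the generic argument: pick $w\in W$ with $w\cdot\lambda=x\cdot\lambda$ and $\overline{w}=w$, so $L(x\cdot\lambda)=T^{\mathrm{on}}_\lambda L_{\overline{x}}$, and apply adjunction to get $\mathrm{Ext}^1_{\cO_\lambda}(L(x\cdot\lambda),\Delta(y\cdot\lambda))\cong\mathrm{Ext}^1_{\cO_0}(L_{\overline{x}},T^{\mathrm{out}}_\lambda\Delta(y\cdot\lambda))\cong\mathrm{Ext}^1_{\cO_0}(L_{\overline{x}},\Delta_{\underline{y}})$, up to grading shift. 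Now Corollary~\ref{cor2} (for $\overline{x}\neq w_0$) gives exactly the dichotomy: the Ext is $1$-dimensional if $\overline{x}\in\Phi(\mathbf{BM}(\underline{y}))$ and $0$ otherwise, matching the last two cases. For the first case $\overline{x}=w_0$, one has $L(x\cdot\lambda)$ with $x$ in the top coset $w_0W^\lambda$; here $T^{\mathrm{out}}_\lambda\Delta(y\cdot\lambda)\cong\Delta_{\underline{y}}$ again, so we need $\mathrm{Ext}^1_{\cO_0}(L_{w_0},\Delta_{\underline{y}})$. By Proposition~\ref{extandsoc} this equals $[\soc(\Delta_e/\Delta_{\underline{y}}):L_{w_0}\langle\text{shift}\rangle]$ — but more to the point we want the total dimension, which is governed by Theorem~\ref{thm1}: $\dim\mathrm{Ext}^1(\Delta_{w_0},\Delta_{\underline{y}})=\mathbf{c}(w_0\underline{y})$ after using $L_{w_0}=\Delta_{w_0}$. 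The claim then is that the correction from passing between $\cO_0$ and $\cO_\lambda$ subtracts precisely $\mathrm{rank}(W^\lambda)$; this should come from the fact that translating $\Delta_{w_0}=L_{w_0}$ onto the wall produces a module whose radical filtration loses the $\mathrm{rank}(W^\lambda)$ "new" simple reflections absorbed into $W^\lambda$, so that $\mathbf{c}(\overline{x}\underline{y})-\mathrm{rank}(W^\lambda)=\mathbf{c}(w_0\underline{y})-\mathrm{rank}(W^\lambda)$ counts the simple reflections outside $W^\lambda$ appearing in $w_0\underline{y}$. One checks $\overline{x}=w_0$ forces $\overline{x}\underline{y}=w_0\underline{y}$, and that $\underline y$ has no simple reflection of $W^\lambda$ occurring on its left descent side, so the content subtraction is clean.

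The main obstacle I expect is the first case: carefully controlling the graded shifts under translation on/off the wall and verifying that the $\mathbf{c}(w_0\underline y)$ from Theorem~\ref{thm1} transfers to $\mathbf{c}(\overline x\underline y)-\mathrm{rank}(W^\lambda)$ with no off-by-one error. Concretely, I would either (a) compute $\mathrm{Ext}^1_{\cO_\lambda}(L(w_0\cdot\lambda),\Delta(y\cdot\lambda))$ directly via a projective presentation of $L(w_0\cdot\lambda)$ in $\cO_\lambda$ (using that $L(w_0\cdot\lambda)=\Delta(w_0\cdot\lambda)$ is still Verma), reducing to $\mathrm{hom}$-spaces between Verma modules in $\cO_\lambda$ and counting them by the Bruhat order on minimal coset representatives; or (b) invoke \cite[Theorem~32]{Ma1} in the singular block directly if its proof there already covers walls, and then only the second/third cases need the translation argument above. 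Approach (a) has the advantage of being self-contained and of making the $\mathrm{rank}(W^\lambda)$ term transparent: the first syzygy of $\Delta(w_0\cdot\lambda)$ in $\cO_\lambda$ is built from $\Delta(s\cdot w_0\cdot\lambda)$ for simple reflections $s$ with $sw_0 < w_0$ but $s\notin W^\lambda$ in the appropriate sense, and there are exactly $\mathbf{c}(w_0)-\mathrm{rank}(W^\lambda)$ fewer such than in the regular block — combined with a Bruhat-order count of which of these map to $\Delta(y\cdot\lambda)$, this yields $\mathbf{c}(\overline{x}\underline{y})-\mathrm{rank}(W^\lambda)$. Either way, the remaining steps are routine once the wall-crossing bookkeeping is pinned down.
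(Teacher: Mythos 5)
First, note that the paper does not prove this statement: Theorem~\ref{thm75} is quoted verbatim as \cite[Theorem~16]{KMM3}, so your proposal can only be measured against the surrounding machinery (Proposition~\ref{extandsoc}, Proposition~\ref{prop-singular}) and against \cite{KMM3} itself. Your overall strategy --- biadjointness of translation onto/out of the wall, reduction to the regular block --- is indeed the right framework and is the one the paper uses in the reverse direction in Proposition~\ref{prop-singular}. However, there is a genuine error at the crucial step: the claim that $T^{\mathrm{out}}_\lambda\Delta(y\cdot\lambda)\cong\Delta_{\underline{y}}$ is false. Translation out of the wall applied to a Verma module produces a module with a Verma flag of length $|W^\lambda|$ whose subquotients are all $\Delta_{z}$ with $z\in \underline{y}\,W^\lambda$ (this is exactly the module called $Q$ in the proof of Proposition~\ref{prop-singular}, where $Q_w\cong\theta_{\tilde w_0}\Delta_w=\theta^{\mathrm{out}}\theta^{\mathrm{on}}\Delta_w$). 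Consequently the adjunction gives $\mathrm{Ext}^1_{\cO_\lambda}(L(x\cdot\lambda),\Delta(y\cdot\lambda))\cong\mathrm{Ext}^1_{\cO_0}(L_{\overline{x}},Q)$ with $Q$ a nontrivial Verma-flagged module, not a single Verma, and you cannot invoke Corollary~\ref{cor2} directly. Computing $\mathrm{Ext}^1(L_{\overline{x}},Q)$ from the flag is not routine: the long exact sequences have connecting maps, and the actual argument (as in \cite{KMM3}) runs through a socle computation --- one identifies $\dim\mathrm{Ext}^1(L(x\cdot\lambda),\Delta(y\cdot\lambda))$ with $[\soc\,\Delta(\lambda)/\Delta(y\cdot\lambda):L(x\cdot\lambda)]$ by a cotilting/injective-hull argument as in Proposition~\ref{extandsoc}, and then matches this socle with $\soc\,\co_{\underline{y}}$ via \cite[Proposition~15]{KMM3}. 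That transfer of socles between the singular and regular blocks is the missing ingredient, and it is precisely the nontrivial content.

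The case $\overline{x}=w_0$ is also left at the level of a plausibility argument (``this should come from the fact that\dots''). What is actually needed there is a singular analogue of Theorem~\ref{thm1}, i.e.\ a computation of $\mathrm{Ext}^1(\Delta(w_0\cdot\lambda),\Delta(y\cdot\lambda))$ inside $\cO_\lambda$; your alternative (a) --- a projective (or rather cotilting) presentation of the antidominant Verma in the singular block --- is the right way to make the term $-\mathrm{rank}(W^\lambda)$ appear, but as written it is a plan, not a proof, and the bookkeeping of which simple reflections survive on the wall is exactly where the content lies. In summary: correct general approach, but the reduction to the regular block rests on a false isomorphism and omits the key socle-comparison lemma, so the argument as it stands does not go through.
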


\subsection{The graded picture in type $A$}\label{s3.4}

Corollary~\ref{cor2} admits a graded lift. Let $s_1,\dots,s_{n-1}$
be the simple reflections in $S_n$ such that the corresponding Dynkin diagram is 
\begin{displaymath}
\xymatrix{s_1\ar@{-}[rr]&&s_2\ar@{-}[rr]&&\dots\ar@{-}[rr]&&s_{n-1}} 
\end{displaymath}
For $i,j\in\{1,2,\dots,n-1\}$, let
\begin{displaymath}
{}_i\mathbf{B}_j:=
\{w\in \mathbf{B}\,:\,\ell(s_iw)<\ell(w)\text{ and }\ell(ws_j)<\ell(w)\}. 
\end{displaymath}
The set ${}_i\mathbf{B}_j$ consists of $\min\{i,j,n-i,n-j\}$ elements
which can be described very explicitly, see \cite[Subsection~4.2]{KMM3}.
For example, here are the three elements of ${}_4\mathbf{B}_3$ in $S_7$
and their graphs:

\resizebox{\textwidth}{!}{
$
\left(\begin{array}{ccccccc}1&2&3&4&5&6&7\\1&2&5&3&4&6&7\end{array}\right),\quad
\left(\begin{array}{ccccccc}1&2&3&4&5&6&7\\1&5&6&2&3&4&7\end{array}\right),\quad
\left(\begin{array}{ccccccc}1&2&3&4&5&6&7\\5&6&7&1&2&3&4\end{array}\right),
$}

\resizebox{\textwidth}{!}{
$
\xymatrix@C=1.5mm@R=8mm{1\ar@{-}[d]&2\ar@{-}[d]&3\ar@{-}[drr]&4\ar@{-}[dl]
&5\ar@{-}[dl]&6\ar@{-}[d]&7\ar@{-}[d]\\1&2&3&4&5&6&7},\qquad
\xymatrix@C=1.5mm@R=8mm{1\ar@{-}[d]&2\ar@{-}[drrr]&3\ar@{-}[drrr]&4\ar@{-}[dll]
&5\ar@{-}[dll]&6\ar@{-}[dll]&7\ar@{-}[d]\\1&2&3&4&5&6&7},\qquad
\xymatrix@C=1.5mm@R=8mm{1\ar@{-}[drrrr]&2\ar@{-}[drrrr]&3\ar@{-}[drrrr]&4\ar@{-}[dlll]
&5\ar@{-}[dlll]&6\ar@{-}[dlll]&7\ar@{-}[dlll]\\1&2&3&4&5&6&7}.
$}

The elements of ${}_i\mathbf{B}_j$ form naturally a chain with respect to the
Bruhat order on $S_n$. This allows us to index the elements of the set 
${}_i\mathbf{B}_j$ via the tuples $(i,j,k)$, where $0\leq k\leq \min\{i,j,n-i,n-j\}-1$, 
increasingly along the Bruhat order. 
Since Theorem~\ref{thm2} is gradable, see \cite[Proposition~22]{KMM3}, we can lift Corollary~\ref{cor2} to the
graded setup by Proposition~\ref{extandsoc}.

\begin{proposition}\label{cor2graded}
Let $y\in S_n$ and $x=\Phi((i,j,k))$, where $(i,j,k)\in \mathbf{BM}(y)$.
Then the unique degree $m\in\mathbb{Z}$ for which
$\dim\mathrm{ext}_{}^{1}(L_x\langle -m \rangle,\Delta_y\langle -\ell(y)\rangle)=1$ is 
\[m = \frac{(n-1)(n-2)}{2}+\vert i-j\vert+2k.
\]
\end{proposition}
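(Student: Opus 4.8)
The statement is a purely degree-bookkeeping refinement of Corollary~\ref{cor2}: we already know from that corollary (or directly from Proposition~\ref{extandsoc} applied with $x\in\mathcal J$) that $\mathrm{Ext}^1(L_x,\Delta_y)$ is one-dimensional precisely when $x=\Phi(z)$ for some $z\in\mathbf{BM}(y)$, and that this extension lives in a single internal degree. The task is to pin down that degree. My plan is to start from the graded identity $\dim\mathrm{ext}^1(L_x\langle d\rangle,\Delta_y\langle -\ell(y)\rangle)=[\soc\,\co_y:L_x\langle d\rangle]$ of Proposition~\ref{extandsoc}, so that everything reduces to locating, in the graded module $\co_y=\Delta_e/(\Delta_y\langle -\ell(y)\rangle)$, the internal degree in which the socle constituent $L_x$ (with $x=\Phi((i,j,k))\in\mathcal J$) sits. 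By the graded lift of Theorem~\ref{thm2} (see \cite[Proposition~22]{KMM3}), this degree is independent of $y$ among those $y$ for which $(i,j,k)\in\mathbf{BM}(y)$; indeed the relevant $L_x$ already appears as a subquotient of $\co_w$ for $w$ the join-irreducible element $(i,j,k)\in\mathbf{B}$ itself, and its internal degree there is the same. So it suffices to compute the degree of the socle copy of $L_x$ in $\co_w$ where $w=(i,j,k)$ is bigrassmannian.

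\textbf{Key steps.} First I would reduce, via Proposition~\ref{extandsoc} and the grading of Theorem~\ref{thm2}, to the single bigrassmannian element $w=(i,j,k)$, so that the target is $m=\deg$ of the unique $L_x\langle -m\rangle$ appearing in $\soc\,\co_w$ with $x=\Phi((i,j,k))=w_{s_i,s_j}\in\mathcal J$. Second, I would make the inclusion $\Delta_w\langle -\ell(w)\rangle\hookrightarrow\Delta_e$ explicit: the nonzero map $\Delta_w\to\Delta_e$ has internal degree $\ell(w)$ (by the $\mathrm{hom}$-degree formula recalled in Subsection~\ref{ss:socandext}), which is exactly the normalization making the cokernel $\co_w$ well-defined, and the socle constituent $L_x$ then sits in the degree which I must identify combinatorially. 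Third — the computational heart — I would extract $m$ by computing, in the graded Grothendieck group $\mathrm{Gr}(\cO_0^{\mathbb Z})\cong\mathbf H$, the graded multiplicity $[\co_w:L_x\langle\text{--}\rangle]$ from the formula $[\co_w]=[\Delta_e]-v^{\ell(w)}[\Delta_w]=\sum_z(p_{z,e}-v^{\ell(w)}p_{z,w})H_z$ re-expanded in the basis $\{[L_z]\}$, using the known inverse Kazhdan-Lusztig data; since $L_x$ with $x\in\mathcal J$ occurs in $\co_w$ with multiplicity one, the unique power of $v$ that shows up is $v^m$. Concretely $x=w_{s_i,s_j}$ has $\ell(x)=\ell(w_0)-1=\binom{n}{2}-1$, and after normalizing by the $\langle-\ell(y)\rangle=\langle-\ell(w)\rangle$ shift one reads off $m=\tfrac{(n-1)(n-2)}{2}+|i-j|+2k$. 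Fourth, I would cross-check this against the boundary/extreme members of the chain ${}_i\mathbf{B}_j$: for $k=0$ (the Bruhat-smallest element, e.g.\ the first tableau-type picture above) one expects $m=\tfrac{(n-1)(n-2)}{2}+|i-j|$, and the explicit permutations listed for ${}_4\mathbf B_3$ in $S_7$ give a concrete numerical test.

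\textbf{Main obstacle.} The genuine difficulty is Step three: turning the qualitative statement "this $L_x$ appears once in the socle of $\co_w$" into the exact internal degree. Theorem~\ref{thm2} and its graded lift tell us \emph{which} simple occurs, but not in which degree; recovering the degree requires either a careful reading of the graded structure of $\Delta_e$ (which KL constituents of $\Delta_e$ that lie in $\mathcal J$ appear in which degrees) or an inductive argument along the Bruhat chain ${}_i\mathbf B_j$ showing that passing from the $k$-th to the $(k{+}1)$-st bigrassmannian element shifts the relevant socle degree by exactly $2$, together with a separate computation anchoring the base case $k=0$ at degree $\tfrac{(n-1)(n-2)}{2}+|i-j|$. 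The $+2k$ term is the one that needs the chain structure; the $|i-j|$ term is essentially the length difference $\ell(w_0)-\ell(x)$ corrected by how deep $x$ sits, and the constant $\tfrac{(n-1)(n-2)}{2}=\ell(w_0)-(n-1)$ reflects the length of the penultimate-cell element relative to $w_0$. I expect the cleanest route is the inductive one, using that the $\co_w$ for consecutive $w\in{}_i\mathbf B_j$ differ by a short exact sequence whose graded Euler characteristic is controlled, rather than a frontal assault on the graded decomposition numbers of $\Delta_e$.
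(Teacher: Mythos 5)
Your overall strategy coincides with the paper's: reduce via Proposition~\ref{extandsoc} to locating the internal degree of the socle constituent $L_x$ of $\co_y$, and observe that this degree depends only on the bigrassmannian element $(i,j,k)$ and not on $y$. The paper closes the remaining step simply by citing the graded lift of Theorem~\ref{thm2} established in \cite[Proposition~22]{KMM3}, which already records in which internal degree of $\Delta_e$ the copy of $L_x$ attached to $(i,j,k)$ sits; the displayed formula for $m$ is read off from there rather than recomputed. So the reduction you perform is exactly what is intended, and the ``computational heart'' you isolate is precisely the content of the cited prior result.

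That said, the specific mechanism you propose in your Step three does not work as stated. You claim that, for $w=(i,j,k)$ bigrassmannian, $L_x$ with $x=\Phi((i,j,k))$ occurs in $\co_w$ with multiplicity one, so that the graded character $[\Delta_e]-v^{\ell(w)}[\Delta_w]$ exhibits a unique power $v^m$ in the coefficient of $[L_x]$. But by Theorem~\ref{thm2}\eqref{thm2.3} the $\mathcal{J}$-subquotients of $\co_w$ correspond to all $y\in\mathbf{B}$ with $y\le w$, and among these the elements $(i,j,0),\dots,(i,j,k)$ all map to the same $x=w_{s_i,s_j}$ under $\Phi$; hence $L_x$ occurs in $\co_{(i,j,k)}$ with multiplicity $k+1$ (in the degrees $\frac{(n-1)(n-2)}{2}+\vert i-j\vert+2k'$ for $0\le k'\le k$), and only the top-degree copy lies in the socle. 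A pure Grothendieck-group computation therefore produces $k+1$ candidate degrees and cannot by itself decide which one is the socle degree; you would need the inductive argument along the chain ${}_i\mathbf{B}_j$ that you mention as a fallback (or, as the paper does, the already-graded form of Theorem~\ref{thm2} from \cite{KMM3}). Since you neither carry out that induction nor anchor its base case, the formula for $m$ remains asserted rather than proved.
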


Similarly, Theorem~\ref{thm75} can also be graded.

\begin{proposition}\label{cor2grade2}
Let $y=\underline{y}\in S_n$ and $x=\overline{x}=\Phi((i,j,k))$, where $(i,j,k)\in \mathbf{BM}(y)$.
Then the unique degree $m\in\mathbb{Z}$ for which
$\dim\mathrm{ext}_{}^{1}(L(x\cdot \lambda)\langle -m \rangle,\Delta(y\cdot \lambda)\langle -\ell(y) \rangle)=1$ is 
\[m = \frac{(n-1)(n-2)}{2}+\vert i-j\vert+2k.\]
\end{proposition}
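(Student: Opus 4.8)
\textbf{Proof proposal for Proposition~\ref{cor2grade2}.}

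The plan is to reduce the singular-block statement to the regular-block statement already established in Proposition~\ref{cor2graded}, by using the standard graded translation functors relating $\mathcal{O}_0^{\mathbb{Z}}$ and $\mathcal{O}_\lambda^{\mathbb{Z}}$ (translation to and from the wall). Concretely, let $\theta^{\mathrm{on}}$ and $\theta^{\mathrm{out}}$ denote the graded translation functors onto and out of the wall determined by $W^\lambda$, suitably normalized so that $\theta^{\mathrm{on}}$ is exact and sends $\Delta_y\langle -\ell(y)\rangle$ to $\Delta(y\cdot\lambda)\langle -\ell(y)\rangle$ whenever $y=\underline y$ is the shortest element in its coset, and kills $\Delta_w$ when $w$ is not shortest in its coset. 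On the simple side, for $x=\overline x$ the longest element in its coset, $\theta^{\mathrm{out}}$ applied to $L(x\cdot\lambda)$ has $L_x$ as a composition factor in a controlled degree, and more importantly the adjunction $(\theta^{\mathrm{out}},\theta^{\mathrm{on}})$ (up to a grading shift by $\ell(w_0^\lambda)=\ell(\overline e)$, where $w_0^\lambda$ is the longest element of $W^\lambda$) converts $\mathrm{ext}^1$ computations in $\mathcal{O}_\lambda^{\mathbb{Z}}$ into $\mathrm{ext}^1$ computations in $\mathcal{O}_0^{\mathbb{Z}}$. The upshot is a natural isomorphism, for the relevant objects,
\begin{displaymath}
\mathrm{ext}^1_{\mathcal{O}_\lambda}(L(x\cdot\lambda)\langle -m\rangle,\Delta(y\cdot\lambda)\langle -\ell(y)\rangle)\cong
\mathrm{ext}^1_{\mathcal{O}_0}(L_x\langle -m'\rangle,\Delta_y\langle -\ell(y)\rangle)
\end{displaymath}
with an explicit relation between $m$ and $m'$ coming from the grading shifts in the adjunction; the ungraded version of exactly this isomorphism is what underlies Theorem~\ref{thm75}, so the content here is purely a matter of tracking degrees.

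First I would recall precisely the graded normalization of the translation functors from \cite{St} (or \cite{So2}) and state the two facts I need: (1) $\theta^{\mathrm{on}}\Delta_{\underline y}\langle -\ell(\underline y)\rangle\cong\Delta(\underline y\cdot\lambda)\langle -\ell(\underline y)\rangle$ in $\mathcal{O}_\lambda^{\mathbb{Z}}$; and (2) the graded adjunction isomorphism $\mathrm{ext}^i_{\mathcal{O}_\lambda}(\theta^{\mathrm{on}}M,N)\cong\mathrm{ext}^i_{\mathcal{O}_0}(M,\theta^{\mathrm{out}}N\langle c\rangle)$ for the appropriate constant $c$ expressed in terms of $\ell(w_0^\lambda)$. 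Second, I would identify $\theta^{\mathrm{out}}L(x\cdot\lambda)$: since $x=\overline x$, this module is indecomposable with simple top and socle $L_x$ (this is the standard statement that translating the "antidominant-in-its-coset–free" simple out of the wall behaves well), and I would record in which degree $L_x$ and its grading shifts appear. Third, I would combine these with Proposition~\ref{cor2graded}: the regular-block computation tells us that $\mathrm{ext}^1(L_x\langle -m'\rangle,\Delta_y\langle -\ell(y)\rangle)$ is one-dimensional exactly for $m'=\frac{(n-1)(n-2)}{2}+|i-j|+2k$, and the point of the proof is to check that the degree bookkeeping in the adjunction is an overall shift that does \emph{not} change this formula — i.e.\ the shift contributed by passing $\theta^{\mathrm{on}}$ across the adjunction exactly cancels the shift built into the normalization of $\Delta(y\cdot\lambda)$ and of $\theta^{\mathrm{out}}L(x\cdot\lambda)$. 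Finally, I would observe that $\Phi((i,j,k))=\overline x$ makes sense precisely because the relevant penultimate-cell element is already longest in its $W^\lambda$-coset, so no coset-representative ambiguity arises; the labelling $(i,j,k)\in\mathbf{BM}(y)$ transfers verbatim from Theorem~\ref{thm75}'s middle case.

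The main obstacle is the degree bookkeeping in step two and three: one must pin down the exact grading shifts in the graded translation functors and in the graded adjunction, and verify that they conspire so that the answer $m=\frac{(n-1)(n-2)}{2}+|i-j|+2k$ is literally the same formula as in the regular block rather than a shifted version of it. This is plausible \emph{a priori} because the right-hand side depends only on the data $(i,j,k)$ attached to the penultimate-cell element, and that element, together with the socle of $\co_y$ computed via Theorem~\ref{thm2}, lives in the regular block; singularization only amounts to choosing longest/shortest coset representatives and does not move things within the Bruhat interval controlling the socle. An alternative, more self-contained route that avoids adjunction-shift computations is to redo the argument of Proposition~\ref{extandsoc} directly in $\mathcal{O}_\lambda^{\mathbb{Z}}$: express $\mathrm{ext}^1(L(x\cdot\lambda)\langle -m\rangle,\Delta(y\cdot\lambda)\langle -\ell(y)\rangle)$ as a graded socle multiplicity $[\soc(\Delta(\underline e\cdot\lambda)/\Delta(y\cdot\lambda)\langle -\ell(y)\rangle):L(x\cdot\lambda)\langle -m\rangle]$ and then use the fact that this singular cokernel is obtained from the regular one $\co_y$ by applying $\theta^{\mathrm{on}}$, whose effect on graded composition-factor multiplicities in the relevant degrees is again an overall shift one checks once; either way the crux is the same single degree-counting lemma, which I would isolate and prove before assembling the pieces.
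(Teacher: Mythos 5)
Your fallback route (the last few sentences) is essentially the paper's argument: the paper offers no explicit proof of Proposition~\ref{cor2grade2} beyond ``similarly, Theorem~\ref{thm75} can also be graded,'' and the intended mechanism --- visible in the proof of Proposition~\ref{prop-singular} --- is exactly to identify $\dim\mathrm{ext}^1(L(x\cdot\lambda)\langle -m\rangle,\Delta(y\cdot\lambda)\langle-\ell(y)\rangle)$ with the graded socle multiplicity $[\soc\,\Delta(\lambda)/\Delta(y\cdot\lambda):L(x\cdot\lambda)\langle -m\rangle]$ by the argument of Proposition~\ref{extandsoc}, and then to match this singular cokernel with the regular $\co_y$ via translation onto the wall, using \cite[Proposition~15]{KMM3} together with the graded socle computation \cite[Proposition~22]{KMM3}. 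Had you led with that route, this would be a clean match.

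Your primary route, however, has two genuine problems. First, the claim that $\theta^{\mathrm{on}}$ ``kills $\Delta_w$ when $w$ is not shortest in its coset'' is false: translation to the wall never annihilates a Verma module; it sends every $\Delta_w$ to a grading shift of $\Delta(\underline{w}\cdot\lambda)$. What it kills are the simples $L_w$ with $w\neq\overline{w}$. Second, and more seriously, the displayed isomorphism
$\mathrm{ext}^1_{\mathcal{O}_\lambda}(L(x\cdot\lambda)\langle -m\rangle,\Delta(y\cdot\lambda)\langle-\ell(y)\rangle)\cong\mathrm{ext}^1_{\mathcal{O}_0}(L_x\langle -m'\rangle,\Delta_y\langle-\ell(y)\rangle)$
is the \emph{conclusion}, not something the adjunction hands you. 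If you write $L(x\cdot\lambda)=\theta^{\mathrm{on}}L_x$ and apply adjunction, you land at $\mathrm{ext}^1_{\mathcal{O}_0}(L_x,\theta^{\mathrm{out}}\Delta(y\cdot\lambda)\langle c\rangle)$, and $\theta^{\mathrm{out}}\Delta(y\cdot\lambda)\cong\theta_{\tilde w_0}\Delta_y$ has a Verma flag of length $|W^\lambda|$; relating an extension into this module to an extension into the single Verma $\Delta_y$ is precisely the non-trivial content of Proposition~\ref{prop65} and Proposition~\ref{prop-singular}\eqref{prop-singular.2}, not ``purely a matter of tracking degrees.'' Moving the functor the other way instead produces $\mathrm{ext}^1(\theta^{\mathrm{out}}L(x\cdot\lambda),\Delta_y)$ with a non-simple first argument, so the same obstruction appears. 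You correctly sense that a single lemma is needed, but you mislocate it as degree bookkeeping when it is the structural comparison of socles (equivalently, of first extensions) across the wall that \cite[Proposition~15]{KMM3} supplies. Drop the adjunction scaffolding and run the socle argument in $\mathcal{O}_\lambda^{\mathbb Z}$ directly.
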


\subsection{First extension to a Verma from other simples in other types}\label{s3.6}

By Proposition~\ref{extandsoc}, the problem again reduces to determining 
the (socles of) $\co_w$, for $w\in W$. However, the latter does not seem to follow 
a uniformly describable pattern, in general. In particular, it is shown in 
\cite{KMM4} that none of the statements in Theorem~\ref{thm2} is true, in general, 
in other types. 

What remains to be true is that, for $x,w\in W$ with $x\neq w_0$,
we have $\Ext^1(L_x,\Delta_w) = 0$, unless $x\in \mathcal J$.
Another partial result is an upper bound.
Let
\[_s \mathbf{BM}_t (w)= \{z\in \mathbf{BM}(w)\ |\ \text{ $sz<z$ and $zt<z$}\}, \]
for $w\in W$ and  $s,t$ simple reflections. The following is
Theorem~F(c) in \cite{KMM4}.

\begin{theorem}\label{thmnotA}
Let $w\in W$ and $x\in \mathcal J$. If $s,t$ are simple 
reflections in $W$ such that $sx>x$ and $xt>x$, then
\begin{equation}\label{eqc}
\dim\ext^1(L_x,\Delta_w\langle d\rangle) \leq \dim\ext^1(L_x,\Delta_b\langle d \rangle), 
\end{equation}
for all $d\in \mathbb Z$, where $b$ is the join of $\!_s\mathbf{BM}_t(w)$. The right hand side of \eqref{eqc} is again bounded by
\begin{equation}
\dim \Ext^1(L_x,\Delta_b)\leq |_s\mathbf{BM}_t(w) |.
\end{equation}
In particular, $\Ext^1(L_x,\Delta_w)=0$, if $_s\mathbf{BM}_t (w)=\emptyset$.
\end{theorem}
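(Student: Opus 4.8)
The plan is to reduce everything to the already-established machinery: Proposition~\ref{extandsoc} converts the extension group into a socle multiplicity in the cokernel $\co_w$, and Corollary~\ref{corthm2} (in the type-$A$ case) or its general analogue controls which $L_x$ with $x\in\mathcal J$ appear. Since we are now in arbitrary type, the first thing I would do is isolate the one structural input that still survives from \cite{KMM4}: for $x\in\mathcal J$ with $sx>x$ and $xt>x$, the multiplicity $[\soc\co_w:L_x\langle d\rangle]$ depends on $w$ only through the sub-poset $\{y\in\mathbf B : y\le w\}$ and, more precisely, only through those join-irreducibles $y$ with $sy<y$ and $yt<y$ — because $L_x$ can only be ``fed'' by a generator $L_x$-type subquotient coming from such a $y$ under the base-to-$\mathcal J$ correspondence. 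This is where the hypothesis $sx>x$, $xt>x$ is used: it pins down, via the $\Phi$-type bijection, exactly the left- and right-descent constraints $sy<y$, $yt<y$ on the relevant join-irreducibles, which is precisely the definition of $\!_s\mathbf{BM}_t(w)$.

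Next I would prove the monotonicity inequality \eqref{eqc}. Set $b=\bigvee{}_s\mathbf{BM}_t(w)$. The point is that $\!_s\mathbf{BM}_t(w)\subseteq\{y\in\mathbf B : y\le b\}$ trivially, so by the general (arbitrary-type) version of Corollary~\ref{corthm2}\eqref{corthm2.1} — namely that simple subquotients $L_x$ of $\co_w$ with $x\in\mathcal J$ correspond to join-irreducibles $\le w$ — every $L_x\langle d\rangle$ occurring in $\soc\co_w$ with $x$ having the prescribed ascents corresponds to some $y\in{}_s\mathbf{BM}_t(w)$, hence $y\le b$, hence that same $L_x\langle d\rangle$ occurs as a subquotient of $\co_b$. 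One then has to upgrade ``subquotient of $\co_b$'' to ``in $\soc\co_b$'': here I would use that $b$ is the join, so $b$ is Bruhat-maximal among the $y$'s being considered and therefore the corresponding $L_x$ survives into the socle by the socle description (the analogue of Theorem~\ref{thm2}\eqref{thm2.4}: socle picks out the images of Bruhat-maximal join-irreducibles $\le w$). Comparing socle multiplicities degree by degree gives $\dim\ext^1(L_x,\Delta_w\langle d\rangle)\le\dim\ext^1(L_x,\Delta_b\langle d\rangle)$ via Proposition~\ref{extandsoc}, and summing over $d$ gives the ungraded statement.

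For the second inequality $\dim\Ext^1(L_x,\Delta_b)\le|{}_s\mathbf{BM}_t(w)|$, I would argue that the total multiplicity of $L_x$ (in all degrees) in $\soc\co_b$ is at most the number of join-irreducibles $y\le b$ that map to $x$ under the base-to-$\mathcal J$ correspondence and satisfy $sy<y$, $yt<y$; and every such $y$ lies in $\mathbf{BM}(b)$, hence — because $b=\bigvee{}_s\mathbf{BM}_t(w)$ and the elements of $\!_s\mathbf{BM}_t(w)$ are exactly the maximal such join-irreducibles below $w$, which are cofinal below $b$ — the count is bounded by $|{}_s\mathbf{BM}_t(w)|$. (One should check the small technical point that distinct elements of $\mathbf{B}$ can a priori map to the same $x\in\mathcal J$ in different degrees, but that only makes the total count larger on the left, consistent with the inequality; what we need is the converse bound, which follows from injectivity of the relevant correspondence restricted to a fixed two-sided cell, as in Theorem~\ref{thm2}\eqref{thm2.2}.) The final ``in particular'' is then immediate: if $\!_s\mathbf{BM}_t(w)=\emptyset$ then $b$ does not exist / the bound is $0$, so $\soc\co_w$ contains no $L_x$ of the required type, and since we already know $\Ext^1(L_x,\Delta_w)=0$ whenever $x\notin\mathcal J$, and for $x\in\mathcal J$ there always exist simple reflections with $sx>x$ and $xt>x$ unless $x=w_0$ (excluded), we get $\Ext^1(L_x,\Delta_w)=0$ outright.

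The main obstacle I anticipate is the step upgrading ``$L_x\langle d\rangle$ is a subquotient of $\co_b$'' to ``$L_x\langle d\rangle$ lies in the \emph{socle} of $\co_b$'': in type $A$ this is handed to us by Corollary~\ref{corthm2}\eqref{corthm2.2}, but in general Theorem~\ref{thm2} fails, so one must instead invoke whatever partial socle-control \cite{KMM4} does provide for elements of the penultimate cell — essentially that, although $\co_w$ need not have simple socle and the full socle need not be describable, the $\mathcal J$-part of the socle is still governed by Bruhat-maximality of join-irreducibles below $w$ with the prescribed descents. Making that precise, and checking it is exactly strong enough to run the two comparisons above, is the crux; the rest is bookkeeping with the functor $\Phi$ and degree shifts, and translating between $\ext^1$ and socle multiplicities is routine via Proposition~\ref{extandsoc}.
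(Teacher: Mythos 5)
The paper does not actually prove this statement: it quotes it verbatim as Theorem~F(c) of \cite{KMM4}, so the only ``proof'' present is a citation, and your argument has to stand on its own. As written it does not. The load-bearing step --- that every $L_x\langle d\rangle$ in $\soc\co_w$ ``corresponds to'' a join-irreducible $y\le w$ with $sy<y$ and $yt<y$, and that Bruhat-maximality of such $y$ below $b$ then forces the same $L_x\langle d\rangle$ into $\soc\co_b$ --- is precisely the general-type analogue of Theorem~\ref{thm2}\eqref{thm2.2}--\eqref{thm2.4} and Corollary~\ref{corthm2}, and Subsection~\ref{s3.6} states explicitly that none of the statements of Theorem~\ref{thm2} survives outside type $A$. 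You acknowledge this and defer to ``whatever partial socle-control \cite{KMM4} does provide,'' but that partial control \emph{is} the content of the theorem being proved, so the argument is circular at its crux. A second, independent problem is multiplicities: in types $D$, $E$, $F$ the quantity $\dim\ext^1(L_x,\Delta_w\langle d\rangle)=[\soc\co_w:L_x\langle d\rangle]$ can exceed $1$ (the paper records bounds $2,3,4,6$), so even a correct support-level correspondence between base elements and elements of $\mathcal J$ would only compare supports, not the dimensions appearing in \eqref{eqc}.

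What is missing is the genuinely new input of \cite{KMM4}: a direct relation between the join $b=\bigvee{}_s\mathbf{BM}_t(w)$ and the Verma modules themselves (the compatibility of the join operation with intersections of Verma submodules of $\Delta_e$), which is what permits a comparison of $\soc\co_w$ with $\soc\co_b$ without any type-$A$-style bijection. Note also that $b$ need not lie below $w$ in the Bruhat order, so the trivial containment ${}_s\mathbf{BM}_t(w)\subseteq\{y\in\mathbf B\,:\,y\le b\}$ does not by itself realize $\co_b$ and $\co_w$ as sub- or quotient objects of one another; some such structural link is indispensable for \eqref{eqc}. Your reduction of $\ext^1$ to socle multiplicities via Proposition~\ref{extandsoc}, and your derivation of the final vanishing claim from the two displayed inequalities, are fine; the two inequalities themselves are not established by the proposal.
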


Using further computation, it is determined in \cite{KMM4} that
\begin{displaymath}
\dim\ext^1(L_x,\Delta_b\langle d \rangle)
\end{displaymath}
is bounded by $1$ in type $B$, by $2$ in types $DF$, and by $3$, $4$, and $6$ 
in types $E_6$, $E_7$ and $E_8$, respectively.

The paper \cite{KMM4} develops several techniques to compute 
specific $\co_w$. Thus for a given $w\in W$, it is often possible 
to determine $\ext^1(L_x,\Delta_w\langle d\rangle)$, for all $x\in W$ 
and $d\in \mathbb Z$. See \cite[Section~5]{KMM4} for details.

\section{$\mathcal{S}$-subcategories in $\mathcal{O}$}\label{s4}

In this subsection we recall the definition and basic properties of 
$\mathcal{S}$-subcategories in $\mathcal{O}$ from \cite{FKM,MS}.

\subsection{Definition}\label{s4.1}

Let $\mathfrak{p}$ be a parabolic subalgebra of $\mathfrak{g}$
containing $\mathfrak{h}\oplus\mathfrak{n}_+$. We denote by
$W^{\mathfrak{p}}$ the corresponding parabolic subgroup of $W$
and by $w_0^\mathfrak{p}$ the longest element of $W^{\mathfrak{p}}$.
Denote by $\mathtt{X}_\mathfrak{p}^{\mathrm{long}}$ 
and $\mathtt{X}_\mathfrak{p}^{\mathrm{short}}$ 
the sets of the longest and the shortest representatives in the 
$W^{\mathfrak{p}}$-cosets from 
${}_{W^{\mathfrak{p}}}\hspace{-1mm}\setminus\hspace{-1mm} W$, respectively.
The map $w_0^\mathfrak{p}\cdot{}_-: \mathtt{X}_\mathfrak{p}^{\mathrm{long}}
\to \mathtt{X}_\mathfrak{p}^{\mathrm{short}}$ is a bijection with inverse
$w_0^\mathfrak{p}\cdot{}_-: \mathtt{X}_\mathfrak{p}^{\mathrm{short}}
\to \mathtt{X}_\mathfrak{p}^{\mathrm{long}}$.

Recall that the parabolic category $\mathcal{O}^{\mathfrak{p}}_0$
is defined in \cite{RC} as the Serre subcategory of $\mathcal{O}_0$
generated by all $L_w$, where $w\in \mathtt{X}_\mathfrak{p}^{\mathrm{short}}$.

We define the $\mathcal{S}$-subcategory $\mathcal{S}_0^{\mathfrak{p}}$ of 
$\mathcal{O}_0$ as the quotient of $\mathcal{O}_0$ modulo the 
Serre subcategory $\mathcal{Q}_\mathfrak{p}$ generated by all 
$L_w$, where $w\not\in \mathtt{X}_\mathfrak{p}^{\mathrm{long}}$.
We denote by $\pi_{\mathfrak{p}}:\mathcal{O}_0\to \mathcal{S}_0$ the Serre
quotient functor.

The category $\mathcal{S}_0^{\mathfrak{p}}$ admits various realizations as 
a full subcategory of $\mathcal{O}_0$. For example, $\mathcal{S}_0^{\mathfrak{p}}$
is equivalent to the full subcategory of $\mathcal{O}_0$ consisting
of all $M$ which have a projective presentation of the form
\begin{displaymath}
X\to Y\to M\to 0, 
\end{displaymath}
such that, for each $P_w$ appearing as a summand of $X$ or $Y$, 
we have $w\in \mathtt{X}_\mathfrak{p}^{\mathrm{long}}$.
Alternatively, $\mathcal{S}_0^{\mathfrak{p}}$ is equivalent to the full 
subcategory of $\mathcal{O}_0$ consisting of all $N$ which have 
an injective copresentation of the form
\begin{displaymath}
0\to N\to X\to Y
\end{displaymath}
such that, for each $I_w$ appearing as a summand of $X$ or $Y$, 
we have $w\in \mathtt{X}_\mathfrak{p}^{\mathrm{long}}$.
By abstract nonsense, see \cite{Au}, $\mathcal{S}_0^{\mathfrak{p}}$ is equivalent to 
the module category over the endomorphism algebra $A^\mathfrak{p}$
of the direct sum of all $P_w$, where $w\in \mathtt{X}_\mathfrak{p}^{\mathrm{long}}$.

We also note that, in the case $W^\mathfrak{p}$ is of type $A_1$, the category
$\mathcal{S}_0^{\mathfrak{p}}$ is the Serre quotient of $\mathcal{O}_0$ by 
$\mathcal{O}^{\mathfrak{p}}_0$. In this case
$W=\mathtt{X}_\mathfrak{p}^{\mathrm{long}}\bigcup\mathtt{X}_\mathfrak{p}^{\mathrm{short}}$.

The graded version $(\mathcal{S}_0^{\mathfrak{p}})^{\mathbb Z}$
of $\mathcal{S}_0^{\mathfrak{p}}$ is similarly defined as the Serre quotient of
$\cO_0^{\mathbb Z}$ by the Serre subcategory of the latter category
generated by all $L_w\langle i\rangle$, where 
$w\not\in \mathtt{X}_\mathfrak{p}^{\mathrm{long}}$ and $i\in\mathbb{Z}$.
We use the same notation $\pi_{\mathfrak p}$ for the graded Serre quotient functor.
The above alternative descriptions have the obvious graded analogues.
For example, $(\mathcal{S}_0^{\mathfrak{p}})^{\mathbb Z}$ is equivalent to
the full subcategory of $\cO_0^{\mathbb Z}$ consisting of all objects which have 
a projective presentation as above with indecomposable summands of the form
$P_w\langle i\rangle$, where $w\in \mathtt{X}_\mathfrak{p}^{\mathrm{long}}$
and $i\in\mathbb{Z}$. Similarly for the injective copresentation.

\subsection{Origins and motivation}\label{s4.2}

$\mathcal{S}$-subcategories in $\mathcal{O}$ were formally defined in \cite{FKM}.
They provide a uniform description for a number of generalizations of category 
$\mathcal{O}$ in \cite{FKM1,FKM2,FKM4,Ma3,MiSo}. Notably, these include
various categories of Gelfand-Zeitlin module, see \cite{Ma3}, and 
Whittaker modules, see \cite{MiSo}.

The realization of the $\mathcal{S}$-subcategories in $\mathcal{O}$ 
as projectively presentable modules in $\mathcal{O}$
was studied in \cite{MS}. In particular, in \cite{MS} it was shown that the
action of projective functors on $\mathcal{S}_0$ categorifies
the permutation $W$-module for $W^{\mathfrak{p}}$, i.e., the
$W$-module obtained by inducing the trivial $W^{\mathfrak{p}}$-module
up to $W$ (see also \cite{MS3} for further details).

\subsection{Stratified structure}\label{s4.3}
Here we recall some structural properties of $\mathcal{S}_0^{\mathfrak{p}}$
established in \cite{FKM,MS}.

For $w\in \mathtt{X}_\mathfrak{p}^{\mathrm{long}}$, denote by 
\begin{itemize}
\item $L^\mathfrak{p}_w$ the object $\pi_\mathfrak{p}(L_w)$
in $\mathcal{S}_0^{\mathfrak{p}}$;
\item $P^\mathfrak{p}_w$ the object $\pi_\mathfrak{p}(P_w)$
in $\mathcal{S}_0^{\mathfrak{p}}$;
\item $I^\mathfrak{p}_w$ the object $\pi_\mathfrak{p}(I_w)$
in $\mathcal{S}_0^{\mathfrak{p}}$;
\item $T^\mathfrak{p}_w$ the object 
$\pi_\mathfrak{p}(T_{w_0^\mathfrak{p}w}\langle -\ell(w_0^\mathfrak{p})\rangle)$
in $\mathcal{S}_0^{\mathfrak{p}}$.
\end{itemize}
By construction, $L^\mathfrak{p}_w$ is simple
and $\{L^\mathfrak{p}_w\,:\, w\in \mathtt{X}_\mathfrak{p}^{\mathrm{long}}\}$
is a complete and irredundant list of representatives of 
simple objects in $\mathcal{S}_0^\mathfrak{p}$. The objects $P^\mathfrak{p}_w$ and 
$I^\mathfrak{p}_w$ are the corresponding indecomposble 
projectives and injectives in $\mathcal{S}_0^\mathfrak{p}$, 
respectively. For structural modules, we will use the same notation for 
the ungraded versions of the modules  and for their graded versions.
The latter are obtained by applying the graded version of $\pi_\mathfrak{g}$
to the standard graded lifts of structural modules.

For $w\in \mathtt{X}_\mathfrak{p}^{\mathrm{long}}$, denote by 
$\overline{\Delta}^\mathfrak{p}_w$ the object $\pi_\mathfrak{p}(\Delta_w)$
in $\mathcal{S}_0^\mathfrak{p}$. 
Then $\overline{\Delta}^\mathfrak{p}_w\cong
\pi_\mathfrak{p}(\Delta_{xw} \langle \ell(x)\rangle)$, for all $x\in W^{\mathfrak{p}}$.
The object $\overline{\Delta}^\mathfrak{p}_w$ is called the
{\em proper standard object} corresponding to the element $w$.

Further, for $w\in \mathtt{X}_\mathfrak{p}^{\mathrm{long}}$, 
let $Q_w\in \mathcal{O}_0$ denote the quotient of $P_w$ modulo 
the trace in $P_w$ of all $P_y$, where $y\in W$ is such that 
$y<w$ with respect to the Bruhat order and $y\neq xw$, for any
$x\in W^{\mathfrak{p}}$. Denote by ${\Delta}^\mathfrak{p}_w$
the object $\pi_\mathfrak{p}(Q_w)$ in $\mathcal{S}_0^\mathfrak{p}$. 
The object ${\Delta}^\mathfrak{p}_w$ is called the
{\em standard object} corresponding to $w$.

The object ${\Delta}^\mathfrak{p}_w$ has a filtration with subquotients
$\overline{\Delta}^\mathfrak{p}_w$ (up to graded shift). The length of 
this filtration is $|W^{\mathfrak{p}}|$. With more details for the 
graded version: for $i\in\mathbb{Z}$, the multiplicity of 
$\overline{\Delta}^\mathfrak{p}_w\langle -2i\rangle$ as a subquotient of a
(graded) proper standard filtration of ${\Delta}^\mathfrak{p}_w$ equals
the cardinality of the set $\{w\in W^\mathfrak{p}\,:\,\ell(w)=i\}$.
Furthermore, each projective object in 
$\mathcal{S}_0^\mathfrak{p}$ has a filtration with standard subquotients. 

The simple preserving duality $\star$ on $\mathcal{O}_0$
induces a simple preserving duality on $\mathcal{S}_0^\mathfrak{p}$ which we
will denote by the same symbol, see \cite[Lemma~2.12]{MS}.

The above means that the underlying algebra $A^\mathfrak{p}$ 
of the category $\mathcal{S}_0^\mathfrak{p}$ is properly stratified in the
sense of \cite{Dl}. The objects $T^\mathfrak{p}_w$ are tilting 
with respect to this structure, in the sense of \cite{AHLU}. 
An additional property of the algebra $A^\mathfrak{p}$ is that 
each $T^\mathfrak{p}_w$ is also cotilting. This follows from the description
of tilting modules for $A^\mathfrak{p}$ in \cite[Section~6]{FKM} and the
fact that these modules are self-dual.

\section{First extension from a simple to a proper
standard module in $\mathcal{S}^{\mathfrak{p}}$}\label{s9}

\subsection{First extension from the antidominant simple}\label{s9.1}

Similarly as in $\cO$, it is easy to separately treat the following special case.

\begin{theorem}\label{fromw0inS}
For $y\in  \mathtt{X}_\mathfrak{p}^{\mathrm{long}}$ and $i\in\mathbb Z$, we have
\begin{displaymath}
\Ext^1_\mathcal{S}(L^\mathfrak{p}_{w_0},\overline{\Delta}^{\mathfrak p}_y) = 
\dim \ext^1_\mathcal{S}\big(L^\mathfrak{p}_{w_0}\langle -\ell(w_0)+2\rangle,\overline{\Delta}^{\mathfrak p}_y\langle -\ell(y)\rangle\big) =
\mathbf{c}(w_0w_0^{\mathfrak p}y). 
\end{displaymath}
\end{theorem}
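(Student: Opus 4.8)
The plan is to reduce the computation in $\mathcal{S}_0^{\mathfrak p}$ to the corresponding computation in $\mathcal{O}_0$, where the answer is given by Theorem~\ref{thm1}. The key observation is that the antidominant simple $L_{w_0}$ lies in $\mathtt{X}_\mathfrak{p}^{\mathrm{long}}$ (since $w_0$ is the longest element of $W$, hence the longest representative in its $W^{\mathfrak p}$-coset), so $L^{\mathfrak p}_{w_0}=\pi_\mathfrak{p}(L_{w_0})$ is a genuine simple object, and moreover $P^{\mathfrak p}_{w_0}=\pi_\mathfrak{p}(P_{w_0})$. The first step is to identify a projective cover (or an initial segment of a projective resolution) of $L^{\mathfrak p}_{w_0}$ in $\mathcal{S}_0^{\mathfrak p}$. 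Since $\pi_\mathfrak{p}$ is an exact quotient functor which sends projectives to projectives and kills exactly the simples outside $\mathtt{X}_\mathfrak{p}^{\mathrm{long}}$, applying $\pi_\mathfrak{p}$ to the start of the projective resolution of $L_{w_0}$ in $\mathcal{O}_0$ should yield the start of the projective resolution of $L^{\mathfrak p}_{w_0}$ in $\mathcal{S}_0^{\mathfrak p}$; here one uses that $P_{w_0}=\Delta_{w_0}$ is already a Verma module, so its radical is a sum of Vermas $\Delta_x$ with $x<w_0$, and one tracks which of these survive $\pi_\mathfrak{p}$.

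The second step is to compute $\mathrm{hom}_\mathcal{S}$ and $\mathrm{ext}^1_\mathcal{S}$ into the proper standard object $\overline{\Delta}^{\mathfrak p}_y=\pi_\mathfrak{p}(\Delta_y)$ using this resolution. Because $\pi_\mathfrak{p}$ is a Serre quotient, $\Hom$ and $\Ext^1$ between objects in the image can be computed either directly in the quotient or, with appropriate care, in $\mathcal{O}_0$: for an object $M\in\mathcal{O}_0$ all of whose top-two-layer composition factors lie in $\mathtt{X}_\mathfrak{p}^{\mathrm{long}}$ (which will be arranged by choosing the right representative of $M$), one has $\ext^i_\mathcal{S}(\pi_\mathfrak{p}M,\pi_\mathfrak{p}N)\cong\ext^i(M,N)$ for $i=0,1$. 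Thus the computation becomes $\ext^1(L_{w_0},\Delta_{w_0^{\mathfrak p}y}\langle\ell(w_0^{\mathfrak p})\rangle)$, using the identification $\overline{\Delta}^{\mathfrak p}_y\cong\pi_\mathfrak{p}(\Delta_{w_0^{\mathfrak p}y}\langle\ell(w_0^{\mathfrak p})\rangle)$ from the recollections in Section~\ref{s4.3} (note $w_0^{\mathfrak p}y\in\mathtt{X}_\mathfrak{p}^{\mathrm{short}}$). Then Theorem~\ref{thm1} applied with $w=w_0^{\mathfrak p}y$ gives $\dim\ext^1(\Delta_{w_0},\Delta_{w_0^{\mathfrak p}y}\langle i\rangle)=\mathbf{c}(w_0\cdot w_0^{\mathfrak p}y)$ in the single degree $i=\ell(w_0)-\ell(w_0^{\mathfrak p}y)-2$, and one checks that after the grading shift by $\langle\ell(w_0^{\mathfrak p})\rangle$ this degree matches the claimed normalization $\langle-\ell(w_0)+2\rangle$ relative to $\overline{\Delta}^{\mathfrak p}_y\langle-\ell(y)\rangle$, using $\ell(y)=\ell(w_0^{\mathfrak p})+\ell(w_0^{\mathfrak p}y)$.

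The degree-matching bookkeeping and, more importantly, the justification that passing to the Serre quotient does not change $\Ext^1$ in this situation are the two places requiring care. The cleanest route to the latter is the projective-presentation realization of $\mathcal{S}_0^{\mathfrak p}$ recalled in Section~\ref{s4.1}: since $L_{w_0}$, $P_{w_0}$, and the relevant Verma modules all have the property that their projective covers are indexed by $\mathtt{X}_\mathfrak{p}^{\mathrm{long}}$ (after choosing the longest representative), they already lie in the full subcategory of $\mathcal{O}_0$ equivalent to $\mathcal{S}_0^{\mathfrak p}$, and $\pi_\mathfrak{p}$ restricted there is the equivalence, so it is fully faithful and $\Ext$-preserving on these objects. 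I expect the main obstacle to be precisely this identification — verifying that $L^{\mathfrak p}_{w_0}$ admits a projective resolution in $\mathcal{S}_0^{\mathfrak p}$ whose first two terms are the $\pi_\mathfrak{p}$-images of the first two terms of the resolution in $\mathcal{O}_0$, equivalently that the Serre subcategory $\mathcal{Q}_\mathfrak{p}$ being quotiented out does not interfere with $\Ext^1(L_{w_0},-)$. Once that is in place, the theorem follows by direct substitution into Theorem~\ref{thm1}, and the two displayed equalities (ungraded dimension and the single graded degree) are then immediate.
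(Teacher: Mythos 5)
Your plan — reduce to $\operatorname{ext}^1_{\mathcal{O}}(L_{w_0},\Delta_{w_0^\mathfrak{p}y})$ and invoke Theorem~\ref{thm1} — identifies the correct target statement, but the reduction you propose does not work, and contains a concrete error. You assert ``$P_{w_0}=\Delta_{w_0}$ is already a Verma module, so its radical is a sum of Vermas''; this is false. The antidominant Verma $\Delta_{w_0}$ is simple (it equals $L_{w_0}$), and the projective cover $P_{w_0}$ is the big projective of Loewy length $2\ell(w_0)+1$ — you have confused $P_{w_0}$ with the dominant Verma $P_e=\Delta_e$. More seriously, $\pi_\mathfrak{p}$ does \emph{not} send all projectives of $\mathcal{O}_0$ to projectives of $\mathcal{S}_0^\mathfrak{p}$: only $P_w$ with $w\in\mathtt{X}_\mathfrak{p}^{\mathrm{long}}$ survive as projectives. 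The second Loewy layer of $P_{w_0}$ contains $L_{w_0s}$ for every simple reflection $s$, and typically some of these $w_0s$ lie in $\mathtt{X}_\mathfrak{p}^{\mathrm{short}}$ (already for $\mathfrak{sl}_3$ with $W^\mathfrak{p}=\{e,s\}$ the factor $L_{ts}$ appears and $ts\notin\mathtt{X}_\mathfrak{p}^{\mathrm{long}}$), so applying $\pi_\mathfrak{p}$ to a minimal projective resolution of $L_{w_0}$ in $\mathcal{O}_0$ does \emph{not} yield a projective resolution in $\mathcal{S}_0^\mathfrak{p}$.

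Your ``cleanest route'' via the projectively presentable realization is also inconsistent with the computation you then perform. To get the answer $\mathbf{c}(w_0w_0^\mathfrak{p}y)$ from Theorem~\ref{thm1} you need the Verma $\Delta_{w_0^\mathfrak{p}y}$ — but that module has top $L_{w_0^\mathfrak{p}y}$ with $w_0^\mathfrak{p}y\in\mathtt{X}_\mathfrak{p}^{\mathrm{short}}$, so its projective cover $P_{w_0^\mathfrak{p}y}$ is \emph{not} among the $P_w$ with $w\in\mathtt{X}_\mathfrak{p}^{\mathrm{long}}$, and $\Delta_{w_0^\mathfrak{p}y}$ does \emph{not} lie in the projectively presentable subcategory. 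If instead you used $\Delta_y$ (the longest representative, which is projectively presentable since $\Delta_y\hookrightarrow P_y$ is well known), then $\pi_\mathfrak{p}(\Delta_y)\cong\overline{\Delta}^\mathfrak{p}_y$ too, but $\operatorname{ext}^1_{\mathcal{O}}(L_{w_0},\Delta_y)$ computes $\mathbf{c}(w_0y)$ — the wrong number. So the claim that $\pi_\mathfrak{p}$ is $\Ext^1$-preserving on these objects simply because the top layers are nice is not enough; you need a genuinely different mechanism. The paper sidesteps all of this by working dually: it uses that $L^\mathfrak{p}_{w_0}=\overline{\Delta}^\mathfrak{p}_{w_0}$ is proper standard and that $T^\mathfrak{p}_y$, the cotilting envelope of $\overline{\Delta}^\mathfrak{p}_y$, is $\operatorname{ext}$-orthogonal to proper standard objects; this reduces $\operatorname{ext}^1_{\mathcal{S}}$ to $\operatorname{hom}_{\mathcal{S}}$ computations against a short exact sequence whose third term $Q$ lifts to a module $Q'$ in $\mathcal{O}$ with a Verma flag, hence with socle a direct sum of copies (and shifts) of $L_{w_0}$. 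Because $w_0\in\mathtt{X}_\mathfrak{p}^{\mathrm{long}}$, the Serre quotient functor identifies these $\operatorname{hom}$ spaces, and only then does one land on $\operatorname{ext}^1_{\mathcal{O}}(L_{w_0},\Delta_{w_0^\mathfrak{p}y})$ and apply Theorem~\ref{thm1}. In short: the correct reduction goes through injective-side (cotilting) data rather than projective covers, precisely because the socle of the relevant cokernel is concentrated in $\mathtt{X}_\mathfrak{p}^{\mathrm{long}}$ while the heads of the relevant kernels are not.
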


\begin{proof}
Note that $L^\mathfrak{p}_{w_0}\cong \overline{\Delta}^\mathfrak{p}_{w_0}$
is a proper standard object. 
The object $T^\mathfrak{p}_{y}$ is both tilting and cotilting. In particular, it is the cotilting envelope of 
$\overline{\Delta}^\mathfrak{p}_{y}$. Let $Q$ be such that the following sequence is short exact in $(\mathcal{S}_0^\mathfrak{p})^\mathbb{Z}$:
\begin{equation}\label{eq3}
0\to  \overline{\Delta}^\mathfrak{p}_{y}\langle -\ell(y) \rangle
\to T^\mathfrak{p}_{y}\langle 2\ell(w_0^\mathfrak{p})-\ell(y) \rangle \to Q \to 0.
\end{equation}
Set $a:=2\ell(w_0^\mathfrak{p})-\ell(y)$.
As proper standard and costandard (and hence also cotiltitng)
objects are homologically orthogonal, it follows that 
\begin{displaymath}
\dim\mathrm{ext}^1_{\mathcal{S}}(L^{\mathfrak{p}}_{w_0}\langle i \rangle,
\overline{\Delta}^{\mathfrak{p}}_y\langle -\ell(y) \rangle)=
\dim\mathrm{hom}_{\mathcal{S}}(L^{\mathfrak{p}}_{w_0}\langle i \rangle,Q)
- \dim\mathrm{hom}_{\mathcal{S}}(L^{\mathfrak{p}}_{w_0}\langle i \rangle,T^\mathfrak{p}_{y}\langle a \rangle)+1.
\end{displaymath}
At the same time, we have
$w_0^\mathfrak{p}y\in \mathtt{X}_\mathfrak{p}^{\mathrm{short}}$.
Therefore,
$\overline{\Delta}^{\mathfrak{p}}_y\cong 
\pi_\mathfrak{p}(\Delta_{w_0^\mathfrak{p}y}\langle \ell(w_0^{\mathfrak p})\rangle)$
and $T^\mathfrak{p}_{y}\cong\pi_\mathfrak{p}(T_{w_0^\mathfrak{p}y}
\langle -\ell(w_0^{\mathfrak p})\rangle)$.
It follows that the sequence given by Formula~\ref{eq3} is obtained
by applying $\pi_\mathfrak{p}$ to the following short
exact sequence in $\mathcal{O}$:
\begin{displaymath}
0\to \Delta_{w_0^\mathfrak{p}y}\langle -\ell(w_0^{\mathfrak p}y) \rangle\to T_{w_0^\mathfrak{p}y} \langle -\ell(w_0^\mathfrak{p}y) \rangle \to Q'\to 0. 
\end{displaymath}
Since $Q'$ has a Verma flag, the socle of $Q'$ is a direct sum of copies
of shifts of $\Delta_{w_0}$, and $w_0\in \mathtt{X}_\mathfrak{p}^{\mathrm{long}}$. 
Consequently, $\pi_\mathfrak{p}$ induces
isomorphisms
\[\mathrm{hom}_{\mathcal{S}}(L^{\mathfrak{p}}_{w_0}\langle i\rangle,Q)=
\mathrm{hom}_{\mathcal{O}}(L_{w_0}\langle i\rangle,Q')\]
and
\[
\mathrm{hom}_{\mathcal{S}}(L^{\mathfrak{p}}_{w_0}
\langle i\rangle,T^\mathfrak{p}_{y}\langle-\ell(w_0^\mathfrak{p}y)  \rangle)=
\mathrm{hom}_{\mathcal{O}}(L_{w_0}\langle i\rangle,T_{w_0^\mathfrak{p}y}
\langle a\rangle).
\]

This implies that
\begin{displaymath}
\mathrm{ext}^1_{\mathcal{S}}(L^{\mathfrak{p}}_{w_0}\langle i\rangle,
\overline{\Delta}^{\mathfrak{p}}_y\langle -\ell(y) \rangle)=
\mathrm{ext}^1_{\mathcal{O}}(L_{w_0}\langle i\rangle,{\Delta}_{w_0^\mathfrak{p}y}\langle-\ell(w_0^\mathfrak{p}y)  \rangle)
\end{displaymath}
and the claim of the theorem now follows from Theorem~\ref{thm1}. 
\end{proof}

\subsection{Inclusions between proper standard modules}\label{s4.05}

Recall from Subsection~\ref{ss:socandext} the properties of 
homomorphisms between Verma modules in $\cO$. Applying the functor 
$\pi_\mathfrak{p}$ gives:
\begin{itemize}
\item every non-zero map between two proper 
standard objects in $\mathcal{S}_0^\mathfrak{p}$ is injective;
\item for $x,y\in \mathtt{X}_\mathfrak{p}^{\mathrm{long}}$, 
we have $\mathrm{hom}_{\mathcal{S}_0}(\overline{\Delta}^\mathfrak{p}_x,
\overline{\Delta}^\mathfrak{p}_y\langle d \rangle)\neq 0$ 
if and only if $x\geq y$ and $d = \ell(y)-\ell(x)$;
\item $\dim \mathrm{Hom}_{\mathcal{S}_0}(\overline{\Delta}^\mathfrak{p}_x,
\overline{\Delta}^\mathfrak{p}_y)\leq 1$, for all 
$x,y\in \mathtt{X}_\mathfrak{p}^{\mathrm{{long}}}$.
\end{itemize}
We thus obtain the canonical quotients $\overline{\Delta}^\mathfrak{p}_y/
\overline{\Delta}^\mathfrak{p}_x:=\overline{\Delta}^\mathfrak{p}_y/(
\overline{\Delta}^\mathfrak{p}_x\langle \ell(y)-\ell(x) \rangle)$.
The following analogue of Proposition~\ref{extandsoc} relates these 
quotients to extensions from simple to proper standard objects in 
$\mathcal{S}_0^\mathfrak{p}$.

\begin{proposition}\label{extandsocinS}
For each $x,y\in  \mathtt{X}_\mathfrak{p}^{\mathrm{long}}$ with $x\neq w_0$, 
we have
\[\dim\ext^1(L^\mathfrak{p}_x\langle d \rangle,
\overline{\Delta}^\mathfrak{p}_y\langle -\ell(y)\rangle)=
[\mathrm{soc}\,
\overline{\Delta}^\mathfrak{p}_e/\overline{\Delta}^\mathfrak{p}_y:
L^\mathfrak{p}_x\langle d \rangle].\]
\end{proposition}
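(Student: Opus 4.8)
The plan is to transport the proof of Proposition~\ref{extandsoc} from $\mathcal{O}_0^{\mathbb Z}$ to $(\mathcal{S}_0^\mathfrak{p})^{\mathbb Z}$ along the quotient functor $\pi_\mathfrak{p}$. Recall $\overline{\Delta}^\mathfrak{p}_e=\pi_\mathfrak{p}(\Delta_e)$, and that by the homomorphism properties of proper standard objects recorded in Subsection~\ref{s4.05} the object $\overline{\Delta}^\mathfrak{p}_y\langle-\ell(y)\rangle$ embeds into $\overline{\Delta}^\mathfrak{p}_e$ with cokernel, by definition, $\overline{\Delta}^\mathfrak{p}_e/\overline{\Delta}^\mathfrak{p}_y$. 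Thus there is a short exact sequence
\[
0\to\overline{\Delta}^\mathfrak{p}_y\langle-\ell(y)\rangle\to\overline{\Delta}^\mathfrak{p}_e\to\overline{\Delta}^\mathfrak{p}_e/\overline{\Delta}^\mathfrak{p}_y\to0
\]
in $(\mathcal{S}_0^\mathfrak{p})^{\mathbb Z}$. Applying $\operatorname{hom}_\mathcal{S}(L^\mathfrak{p}_x\langle d\rangle,-)$ produces a long exact sequence. Since $\dim\operatorname{hom}_\mathcal{S}(S,M)=[\soc M:S]$ for any object $M$ and any graded simple $S$, the statement follows once we prove
\[
\operatorname{hom}_\mathcal{S}(L^\mathfrak{p}_x\langle d\rangle,\overline{\Delta}^\mathfrak{p}_e)=0
\qquad\text{and}\qquad
\ext^1_\mathcal{S}(L^\mathfrak{p}_x\langle d\rangle,\overline{\Delta}^\mathfrak{p}_e)=0
\]
for all $x\in\mathtt{X}_\mathfrak{p}^{\mathrm{long}}$ with $x\neq w_0$ and all $d\in\mathbb{Z}$: the long exact sequence then collapses to an isomorphism $\operatorname{hom}_\mathcal{S}(L^\mathfrak{p}_x\langle d\rangle,\overline{\Delta}^\mathfrak{p}_e/\overline{\Delta}^\mathfrak{p}_y)\cong\ext^1_\mathcal{S}(L^\mathfrak{p}_x\langle d\rangle,\overline{\Delta}^\mathfrak{p}_y\langle-\ell(y)\rangle)$, whose left-hand side has dimension $[\soc(\overline{\Delta}^\mathfrak{p}_e/\overline{\Delta}^\mathfrak{p}_y):L^\mathfrak{p}_x\langle d\rangle]$.

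To establish the two vanishings I would work in $\mathcal{O}_0^{\mathbb Z}$. The Verma module $\Delta_e$ has simple socle $L_{w_0}$, and $w_0\in\mathtt{X}_\mathfrak{p}^{\mathrm{long}}$; hence $\operatorname{hom}_\mathcal{O}(L_z\langle d\rangle,\Delta_e)=0$ whenever $L_z$ is a simple object of $\mathcal{Q}_\mathfrak{p}$, i.e. whenever $z\notin\mathtt{X}_\mathfrak{p}^{\mathrm{long}}$ (in particular $z\neq w_0$). Moreover, Proposition~\ref{extandsoc} applied with $w=e$, where $\Delta_e/\Delta_e=0$, gives $\ext^1_\mathcal{O}(L_z\langle d\rangle,\Delta_e)=0$ for every $z\neq w_0$ and every $d$, so again for every simple object $L_z$ of $\mathcal{Q}_\mathfrak{p}$. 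A routine reduction along composition series then yields $\operatorname{hom}_\mathcal{O}(N,\Delta_e)=\ext^1_\mathcal{O}(N,\Delta_e)=0$ for every $N\in\mathcal{Q}_\mathfrak{p}$. Consequently, by the standard comparison of extensions in an abelian category with those in its Serre quotient (equivalently, the first derived functor of the section functor right adjoint to $\pi_\mathfrak{p}$ vanishes on $\pi_\mathfrak{p}(\Delta_e)$), the functor $\pi_\mathfrak{p}$ induces isomorphisms
\[
\operatorname{hom}_\mathcal{O}(M,\Delta_e)\cong\operatorname{hom}_\mathcal{S}(\pi_\mathfrak{p}M,\overline{\Delta}^\mathfrak{p}_e),
\qquad
\ext^1_\mathcal{O}(M,\Delta_e)\cong\ext^1_\mathcal{S}(\pi_\mathfrak{p}M,\overline{\Delta}^\mathfrak{p}_e)
\]
for every $M\in\mathcal{O}_0^{\mathbb Z}$. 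Taking $M=L_x\langle d\rangle$ (so that $\pi_\mathfrak{p}M=L^\mathfrak{p}_x\langle d\rangle$, since $x\in\mathtt{X}_\mathfrak{p}^{\mathrm{long}}$) and invoking $\soc\Delta_e=L_{w_0}$ together with Proposition~\ref{extandsoc} once more gives precisely the two vanishings for $x\neq w_0$, which completes the argument.

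The step I expect to require the most care is this last comparison: one has to be sure that the vanishing of $\operatorname{hom}_\mathcal{O}(-,\Delta_e)$ and $\ext^1_\mathcal{O}(-,\Delta_e)$ on all of $\mathcal{Q}_\mathfrak{p}$ — not merely on the structural modules that appear explicitly — is what legitimizes identifying $\ext^1_\mathcal{S}(L^\mathfrak{p}_x\langle d\rangle,\overline{\Delta}^\mathfrak{p}_e)$ with $\ext^1_\mathcal{O}(L_x\langle d\rangle,\Delta_e)$. Everything else — the short exact sequence, the long exact sequence, and reading homomorphisms from a simple object as socle multiplicities — is entirely parallel to the proof of Proposition~\ref{extandsoc}.
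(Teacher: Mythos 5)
The skeleton of your argument --- embed $\overline{\Delta}^\mathfrak{p}_y\langle-\ell(y)\rangle$ into the dominant proper standard object, apply the long exact sequence, and reduce everything to the vanishing of $\operatorname{hom}$ and $\ext^1$ from $L^\mathfrak{p}_x\langle d\rangle$ into $\overline{\Delta}^\mathfrak{p}_e$ --- matches the paper's, and the $\operatorname{hom}$-vanishing via $\soc\overline{\Delta}^\mathfrak{p}_e$ being a shift of $L^\mathfrak{p}_{w_0}$ is fine. The gap is in your derivation of $\ext^1_{\mathcal S}(L^\mathfrak{p}_x\langle d\rangle,\overline{\Delta}^\mathfrak{p}_e)=0$ from $\ext^1_{\mathcal O}(L_x\langle d\rangle,\Delta_e)=0$. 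The hypotheses you actually verify, namely $\Hom_{\mathcal O}(N,\Delta_e)=\Ext^1_{\mathcal O}(N,\Delta_e)=0$ for all $N\in\mathcal Q_\mathfrak{p}$, say exactly that $\Delta_e$ is closed for the Serre quotient, i.e.\ $\sigma\pi_\mathfrak{p}\Delta_e\cong\Delta_e$ for the section functor $\sigma$; they do \emph{not} imply $R^1\sigma(\pi_\mathfrak{p}\Delta_e)=0$, so your parenthetical ``equivalently'' is false. Under closedness alone the comparison map is only injective: it sits in an exact sequence $0\to\Ext^1_{\mathcal O}(M,\Delta_e)\to\Ext^1_{\mathcal S}(\pi_\mathfrak{p}M,\pi_\mathfrak{p}\Delta_e)\to\Hom_{\mathcal O}(M,R^1\sigma\pi_\mathfrak{p}\Delta_e)$, which bounds the quotient-category extension space from \emph{below} --- the wrong direction for deducing a vanishing. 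Surjectivity would require in addition something like $\Ext^2_{\mathcal O}(N,\Delta_e)=0$ for all $N\in\mathcal Q_\mathfrak{p}$, and this genuinely fails: already for $\mathfrak{sl}_2$ the projective resolution $0\to P_e\to P_{w_0}\to P_e\to L_e\to 0$ gives $\Ext^2_{\mathcal O}(L_e,\Delta_e)\cong\mathbb C$, and $L_e$ lies in $\mathcal Q_\mathfrak{p}$ whenever $W^\mathfrak{p}\neq\{e\}$.

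The vanishing you need is true, but it requires an argument inside $\mathcal S_0^\mathfrak{p}$ rather than descent from $\mathcal O$. The paper obtains it from the cotilting property of the projective-injective object: up to shift, $\overline{\Delta}^\mathfrak{p}_e\cong\overline{\Delta}^\mathfrak{p}_{w_0^\mathfrak{p}}$ is the first term of a proper standard filtration of its injective envelope $I^\mathfrak{p}_{w_0}$, so the cokernel of $\overline{\Delta}^\mathfrak{p}_{w_0^\mathfrak{p}}\hookrightarrow I^\mathfrak{p}_{w_0}$ again has a proper standard filtration; since every proper standard object has socle a shift of $L^\mathfrak{p}_{w_0}$, no $L^\mathfrak{p}_x$ with $x\neq w_0$ maps into that cokernel, and the long exact sequence for the injective object $I^\mathfrak{p}_{w_0}$ yields $\ext^1_{\mathcal S}(L^\mathfrak{p}_x\langle d\rangle,\overline{\Delta}^\mathfrak{p}_{w_0^\mathfrak{p}})=0$. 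If you replace your Serre-quotient comparison by this argument, the rest of your proof goes through and essentially coincides with the paper's.
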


\begin{proof}
Let $L:= (L^{\mathfrak{p}}_x \langle d \rangle ) ^{\oplus m}$ and suppose we have a short exact sequence
\begin{equation}\label{eq2}
0\to  \overline{\Delta}^{\mathfrak{p}}_y\langle -\ell(y) \rangle\to 
M\to L \to 0
\end{equation}
such that $M$ is indecomposable. Since $L$
is semisimple, we have 
\[\soc M = \soc\overline{\Delta}^{\mathfrak{p}}_y\langle -\ell(y) \rangle = L^{\mathfrak{p}}_{w_0}\langle -\ell(w_0) \rangle.\] 
Thus, the injective covers of $\overline{\Delta}^{\mathfrak{p}}_y$
and of $M$ coincide and are isomorphic to 
$I^{\mathfrak{p}}_{w_0}\langle -\ell(w_0) \rangle$. The latter is also isomorphic to a shift of $P^{\mathfrak{p}}_{w_0}$.

Being both a tilting and a cotilting object,  $P^{\mathfrak{p}}_{w_0}$ has a
proper standard filtration which starts with a submodule isomorphic to
$\overline{\Delta}^{\mathfrak{p}}_{w_0^\mathfrak{p}}$, up to shift.
In particular, the cokernel of the inclusion
\begin{displaymath}
0\to \overline{\Delta}^{\mathfrak{p}}_{w_0^\mathfrak{p}}\langle -\ell(w_0^\mathfrak{p}) \rangle
\to I_{w_0}^\mathfrak{p}\langle -\ell(w_0) \rangle
\end{displaymath}
has a proper standard filtration. 

As the socle of each proper standard module is a 
shift of $L_{w_0}^\mathfrak{p}$, we have 
\begin{displaymath}
\mathrm{Ext}^1_{\mathcal{S}}(L^{\mathfrak{p}}_w,
\overline{\Delta}^{\mathfrak{p}}_{w_0^\mathfrak{p}})=0,
\end{displaymath}
for any $w\in \mathtt{X}_\mathfrak{p}^{\mathrm{long}}$
such that $w\neq w_0$. This implies that $M$ must be a submodule of 
$\overline{\Delta}^{\mathfrak{p}}_{w_0^\mathfrak{p}}
\langle -\ell(w_0^\mathfrak{p}) \rangle$.
In other words, $L$ should be a summand of the
socle of the cokernel of the canonical inclusion
$\overline{\Delta}^{\mathfrak{p}}_{y}\langle -\ell(y) \rangle\subset 
\overline{\Delta}^{\mathfrak{p}}_{w_0^\mathfrak{p}}\langle -\ell(w_0^\mathfrak{p}) \rangle$. 

On the other hand, any summand of this socle gives rise
to a non-split short exact sequence as in Formula~\eqref{eq2} 
(since in that case $M$ obviously has simple socle). The claim follows.
\end{proof}

\subsection{Cokernel of inclusion of proper standard modules}\label{s9.3}

\begin{lemma}\label{lem4-11}
Let $x,y\in \mathtt{X}_\mathfrak{p}^{\mathrm{short}}$ be such that
$x\geq y$. Let $z\in W$ be such that $L_z$ appears in the socle
of $\Delta_y/\Delta_x$. Then $z\in \mathtt{X}_\mathfrak{p}^{\mathrm{long}}$.
\end{lemma}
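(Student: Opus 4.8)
The plan is to exploit the description of the socle of $\Delta_y/\Delta_x$ from Corollary~\ref{corthm2}\eqref{corthm2.2}: the simple constituents $L_z$ in this socle have $z\in\mathcal{J}$, and they correspond, via the bijection $\Phi$ of Theorem~\ref{thm2}\eqref{thm2.2} (together with the map $\mathbf{B}\to\mathcal{J}$ recalled before Corollary~\ref{cor2}), to elements of $\mathbf{BM}(x)\setminus\mathbf{BM}(y)$. So the statement reduces to a purely combinatorial assertion about the Bruhat order in type $A$: if $z\in\mathcal{J}$ is $\Phi$-associated to some bigrassmannian element $b\in\mathbf{B}$ which lies below $x$ but not below $y$, where $x,y\in\mathtt{X}_\mathfrak{p}^{\mathrm{short}}$ and $x\geq y$, then $z\in\mathtt{X}_\mathfrak{p}^{\mathrm{long}}$.

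Next I would unwind the definition of $\Phi$ and of $\mathtt{X}_\mathfrak{p}^{\mathrm{long}}$ in terms of descents. Recall that for $b\in\mathbf{B}$ with $s_ib<b$ and $bs_j<b$, the element $\Phi(b)=w_{s_i,s_j}\in\mathcal{J}$ is characterized by $w_0=s_i\,w_{s_i,s_j}=w_{s_i,s_j}\,s_j$; hence the \emph{left} descent set of $\Phi(b)$ is $S\setminus\{s_i\}$ and its \emph{right} descent set is $S\setminus\{s_j\}$ (this uses that $w_0$ has all simple reflections as both left and right descents and a standard length count, valid in type $A$). On the other hand, $z\in\mathtt{X}_\mathfrak{p}^{\mathrm{long}}$ means exactly that the right descent set of $z$ contains all simple reflections of $W^\mathfrak{p}$, i.e. $z t < z$ for every $t\in S\cap W^\mathfrak{p}$. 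So the task becomes: show that the unique right-ascent $s_j$ of $b$ is \emph{not} a reflection in $W^\mathfrak{p}$.

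The key step is the following: since $b\leq x$ and $x\in\mathtt{X}_\mathfrak{p}^{\mathrm{short}}$, one has $x s_j < x$ would be false for $s_j\in W^\mathfrak{p}$ — indeed shortest coset representatives have \emph{no} right descents inside $W^\mathfrak{p}$. The point is to transfer this from $x$ down to $b$. Here I would argue by contradiction: suppose $s_j\in W^\mathfrak{p}$. Consider the bigrassmannian element $b$ and the multiplication $b s_j > b$. Because $b$ is bigrassmannian with unique right ascent $s_j$, and because of the explicit description of the elements of ${}_i\mathbf{B}_j$ recalled in Subsection~\ref{s3.4}, one can check directly that $b\leq x$ with $x s_j<x$ impossible forces $b s_j \leq x$ as well; but then $b s_j$ is an element $\leq x$ that is still $\geq b$... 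Actually the cleaner route: use that $b\leq x$ and lifting property of the Bruhat order with respect to $s_j$ (the standard ``lifting lemma'', \cite[Chapter~2]{BB}). Since $bs_j>b$ and $xs_j>x$ (the latter because $x$ is a shortest representative and $s_j\in W^\mathfrak{p}$), the lifting property gives $b \leq x$ and nothing new; instead apply it to get a contradiction from $b\not\leq y$: we have $bs_j>b$ and $y s_j > y$, so $b\leq x$, $b\not\leq y$, $x\geq y$, all compatible, so lifting alone is not enough. The genuinely needed input is Corollary~\ref{corthm2}\eqref{corthm2.1}: $b\leq x$ and $b\not\leq y$; but also $y\leq x$. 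I would instead show that any $b\in\mathbf{B}$ with right ascent $t\in W^\mathfrak{p}$ satisfies $b \leq y$ whenever $b\leq x$ and $y$ is the shortest representative below $x$ of $x W^\mathfrak{p}$... — and this is exactly where the hypothesis $x,y\in\mathtt{X}_\mathfrak{p}^{\mathrm{short}}$ with $x\geq y$ must be used, not merely $y\leq x$.

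\textbf{Main obstacle.} The crux — and the step I expect to be hardest — is precisely this last combinatorial implication: controlling, for a bigrassmannian $b$ whose unique right ascent lies in the parabolic $W^\mathfrak{p}$, whether $b\leq x$ propagates to $b\leq y$ when passing from $x$ to a smaller shortest coset representative $y$. I would handle it either through the monomial/window description of bigrassmannian permutations (a bigrassmannian element in $S_n$ is determined by a triple $(a,b;r)$ encoding a single ``rank jump'', and $\leq$ in Bruhat order becomes an inequality of rank functions), translating ``unique right ascent at position $j$'' and ``$x$ has no right descent at $j$'' into rank-function inequalities that force the corresponding constraint already at $y$; or, more structurally, by invoking the fact (standard for shortest coset representatives) that $\mathtt{X}_\mathfrak{p}^{\mathrm{short}}$ is an order ideal's worth of representatives closed under the relevant operations, so that the join-irreducibles below $x$ that are ``$\mathfrak{p}$-relevant'' already lie below $y$. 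Once this combinatorial lemma is in place, the chain $z\in\mathcal{J} \Rightarrow z = \Phi(b)$ for some $b\in\mathbf{BM}(x)\setminus\mathbf{BM}(y)\Rightarrow$ right ascent of $b$ not in $W^\mathfrak{p}\Rightarrow$ right descent set of $z$ contains $S\cap W^\mathfrak{p}\Rightarrow z\in\mathtt{X}_\mathfrak{p}^{\mathrm{long}}$ closes the argument.
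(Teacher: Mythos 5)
There is a genuine gap here, and also a sign error. On the sign error: the cosets in the paper are the right cosets $W^{\mathfrak p}\backslash W$, so $z\in\mathtt{X}_\mathfrak{p}^{\mathrm{long}}$ means $sz<z$ for all $s\in S\cap W^{\mathfrak p}$ (left descents), not $zt<t$ (right descents) as you write; correspondingly, the relevant datum attached to $b$ is its unique \emph{left} descent $s_i$, which becomes the unique left ascent of $\Phi(b)$, and you would need $s_i\notin W^{\mathfrak p}$. More importantly, you explicitly flag that you cannot close the combinatorial step you reduce to (``The crux --- and the step I expect to be hardest --- is precisely this last combinatorial implication'') and offer only sketches of two possible routes without carrying either out. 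That step is exactly the content of the lemma, so the proof is not complete as written. The step is also trickier than you seem to anticipate: in $S_3$ with $W^{\mathfrak p}=\{e,s\}$ and $x=ts\in\mathtt{X}_\mathfrak{p}^{\mathrm{short}}$, the bigrassmannian $b=s$ satisfies $b\le x$ and has unique left descent $s\in W^{\mathfrak p}$, so mere comparability $b\le x$ is not enough; you must use that $b$ is Bruhat \emph{maximal} among bigrassmannians below $x$, and the argument via lifting that you gesture at does not obviously deliver this without further work.

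The paper's own proof goes a different, much shorter and more general route: it does not invoke the type $A$ classification of socles at all. Since $\Delta_y/\Delta_x$ embeds in $\Delta_e/\Delta_x$, any $L_z$ in the socle of the former is in the socle of the latter; then one applies \cite[Proposition~6]{KMM3}, which states (in all types) that if $L_z$ lies in $\soc(\Delta_e/\Delta_x)$ and $sx>x$, then $sz<z$. Applying this to each $s\in S\cap W^{\mathfrak p}$ (which are precisely the left ascents of $x\in\mathtt{X}_\mathfrak{p}^{\mathrm{short}}$) yields $sz<z$ for all such $s$, i.e.\ $z\in\mathtt{X}_\mathfrak{p}^{\mathrm{long}}$. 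This is valid in all types; your proposed route is inherently confined to type $A$ (it rests on Corollary~\ref{corthm2}), and, even restricted to type $A$, it reproves from scratch (and incompletely) precisely the combinatorial shadow of \cite[Proposition~6]{KMM3} that the paper cites. If you do want a purely combinatorial type-$A$ proof, the statement you should be aiming for is: for $b\in\mathbf{BM}(x)$ one has $D_L(b)\subseteq D_L(x)$; but you need to actually prove that, and you need the hypothesis $b\in\mathbf{BM}(x)$ rather than just $b\le x$.
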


\begin{proof}
Note that $x\in \mathtt{X}_\mathfrak{p}^{\mathrm{short}}$ is equivalent to $sx>x$, for each $s\in S\cap W^{\mathfrak p}$. 
Thus, if $L_z$ appears in the socle of $\Delta_y/\Delta_x$, (and thus in the socle of $\Delta_e/\Delta_x$,) then, 
by \cite[Proposition~6]{KMM3}, we have $sz<z$, for each $s\in S\cap W^{\mathfrak p}$. 
The latter is equivalent to $z\in \mathtt{X}_\mathfrak{p}^{\mathrm{long}}$, as desired.
\end{proof}

\begin{proposition}\label{prop4-12'}
For $x,y\in \mathtt{X}_\mathfrak{p}^{\mathrm{long}}$ 
such that  $x\geq y$, we have
\[\soc (\overline{\Delta}^\mathfrak{p}_y / \overline{\Delta}^\mathfrak{p}_x) 
\cong \pi_\mathfrak{p}(\soc\Delta_{w_0^\mathfrak{p}y}/\Delta_{w_0^\mathfrak{p}x}).\]
This isomorphism holds as well for graded modules with the standard shifts, 
that is, if we shift each $\overline{\Delta}^\mathfrak{p}_w$ or $\Delta_w$ 
by $\langle -\ell(w)\rangle$.
\end{proposition}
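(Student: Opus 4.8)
The plan is to transport the statement across the Serre quotient functor $\pi_{\mathfrak p}$ and then invoke what is already known about Verma modules. First I would fix $x,y\in\mathtt{X}_\mathfrak{p}^{\mathrm{long}}$ with $x\ge y$ and set $x'=w_0^{\mathfrak p}x$, $y'=w_0^{\mathfrak p}y\in\mathtt{X}_\mathfrak{p}^{\mathrm{short}}$. Since $x\ge y$ in $\mathtt{X}_\mathfrak{p}^{\mathrm{long}}$ is equivalent to $x'\ge y'$ in $\mathtt{X}_\mathfrak{p}^{\mathrm{short}}$, the inclusion $\Delta_{x'}\hookrightarrow\Delta_{y'}$ (with the standard degree shift) exists, with cokernel $\Delta_{y'}/\Delta_{x'}$. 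Applying $\pi_\mathfrak{p}$, which is exact, and using the identifications $\overline{\Delta}^\mathfrak{p}_w\cong\pi_\mathfrak{p}(\Delta_{w_0^\mathfrak{p}w}\langle\ell(w_0^{\mathfrak p})\rangle)$ from Subsection~\ref{s4.3}, the exact sequence $0\to\Delta_{x'}\to\Delta_{y'}\to\Delta_{y'}/\Delta_{x'}\to 0$ maps to a short exact sequence whose first two terms are (shifts of) $\overline{\Delta}^\mathfrak{p}_x$ and $\overline{\Delta}^\mathfrak{p}_y$; matching degrees shows the inclusion is precisely the canonical $\overline{\Delta}^\mathfrak{p}_x\hookrightarrow\overline{\Delta}^\mathfrak{p}_y$ from Subsection~\ref{s4.05}, so the cokernel is $\overline{\Delta}^\mathfrak{p}_y/\overline{\Delta}^\mathfrak{p}_x\cong\pi_\mathfrak{p}(\Delta_{y'}/\Delta_{x'})$.

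The remaining point is to compare socles, i.e.\ to show $\soc\big(\pi_\mathfrak{p}(\Delta_{y'}/\Delta_{x'})\big)\cong\pi_\mathfrak{p}\big(\soc(\Delta_{y'}/\Delta_{x'})\big)$. In general a Serre quotient functor need not commute with taking socles, so I would argue as follows. Write $N:=\Delta_{y'}/\Delta_{x'}$ and let $\soc N=\bigoplus_z L_z^{\oplus m_z}$ (with appropriate shifts). By Lemma~\ref{lem4-11}, every such $z$ lies in $\mathtt{X}_\mathfrak{p}^{\mathrm{long}}$, hence $\pi_\mathfrak{p}(L_z)=L^\mathfrak{p}_z\ne 0$ is simple, so $\pi_\mathfrak{p}(\soc N)$ is a nonzero semisimple subobject of $\pi_\mathfrak{p}(N)$ and therefore $\pi_\mathfrak{p}(\soc N)\subseteq\soc(\pi_\mathfrak{p}(N))$. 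For the reverse inclusion, suppose $L^\mathfrak{p}_z\langle d\rangle\hookrightarrow\pi_\mathfrak{p}(N)$ for some $z\in\mathtt{X}_\mathfrak{p}^{\mathrm{long}}$. I would lift this to a map in $\mathcal{O}_0^{\mathbb Z}$: since $\pi_\mathfrak{p}$ is the quotient by the Serre subcategory $\mathcal{Q}_\mathfrak{p}$, and $L_z$ is $\mathcal{Q}_\mathfrak{p}$-torsion-free and its quotient $L^\mathfrak{p}_z$ coincides with its image, one has $\mathrm{hom}_{\mathcal{S}}(L^\mathfrak{p}_z\langle d\rangle,\pi_\mathfrak{p}(N))\cong\mathrm{hom}_{\mathcal{O}}(L_z\langle d\rangle,\widehat N)$, where $\widehat N$ is the reflection of $\pi_\mathfrak{p}(N)$ back into $\mathcal{O}_0^{\mathbb Z}$ (the maximal submodule-then-quotient construction). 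Because the socle of $N$ already has no composition factors in $\mathcal{Q}_\mathfrak{p}$ in its socle and, more importantly, $\Delta_{y'}$ has simple-socle $\Delta_{w_0}$-type behaviour (every proper nonzero submodule of a Verma module has the unique simple submodule $L_{w_0}$-related socle), I expect $\widehat N$ to differ from $N$ only by a $\mathcal{Q}_\mathfrak{p}$-part disjoint from the socle, so that $\mathrm{hom}_{\mathcal{O}}(L_z\langle d\rangle,\widehat N)=\mathrm{hom}_{\mathcal{O}}(L_z\langle d\rangle,N)=[\soc N:L_z\langle d\rangle]$. This gives the reverse inclusion and hence the isomorphism. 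Finally, replacing each module by its standard graded shift $\langle-\ell(w)\rangle$ throughout only relabels degrees and leaves every step valid, giving the graded statement.

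The main obstacle is precisely the interplay between $\pi_\mathfrak{p}$ and socles: I need to know that passing $N=\Delta_{y'}/\Delta_{x'}$ through the quotient and back does not create new socle constituents outside $\mathcal{Q}_\mathfrak{p}$. The clean way to control this is to use that $N$ embeds in $\Delta_e/\Delta_{x'}$, whose socle by Theorem~\ref{thm2} (in type $A$) — or, type-independently, by \cite[Proposition~6]{KMM3} together with the fact that all socle constituents of $\Delta_e/\Delta_{x'}$ lie in $\mathtt{X}_\mathfrak{p}^{\mathrm{long}}$ as in Lemma~\ref{lem4-11} — consists of simples not in $\mathcal{Q}_\mathfrak{p}$; combined with the injectivity of proper standard socles being a shift of $L^\mathfrak{p}_{w_0}$ (as exploited in the proof of Proposition~\ref{extandsocinS}), this pins down $\widehat N$ near its socle and closes the argument.
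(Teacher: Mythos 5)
Your overall strategy is the same as the paper's: identify $\overline{\Delta}^\mathfrak{p}_y/\overline{\Delta}^\mathfrak{p}_x$ with $\pi_\mathfrak{p}(\Delta_{y'}/\Delta_{x'})$ and then use Lemma~\ref{lem4-11} to control the socle. You have also correctly put your finger on the one subtle step that the paper's very terse proof passes over, namely why $\pi_\mathfrak{p}$ should commute with taking the socle here. The inclusion $\pi_\mathfrak{p}(\soc N)\subseteq\soc\pi_\mathfrak{p}(N)$ is handled fine. However, your argument for the reverse inclusion has a genuine gap: the crucial claim ``I expect $\widehat N$ to differ from $N$ only by a $\mathcal{Q}_\mathfrak{p}$-part disjoint from the socle'' is asserted, not proved, and the invocation of ``every proper nonzero submodule of a Verma module has simple socle'' is misapplied --- $N=\Delta_{y'}/\Delta_{x'}$ is a \emph{quotient} of a Verma module, not a submodule, and its socle is typically not simple (that is the whole content of Theorem~\ref{thm2}).

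The detour through the section functor $\widehat N$ is also unnecessary and obscures the clean argument, which you nearly had. Lemma~\ref{lem4-11} tells you that $\soc N$ has no composition factor $L_z$ with $z\notin\mathtt{X}_\mathfrak{p}^{\mathrm{long}}$; in other words $N$ has no nonzero subobject lying in $\mathcal{Q}_\mathfrak{p}$. For a Serre quotient functor $\pi_\mathfrak{p}:\mathcal{O}_0^{\mathbb Z}\to(\mathcal{S}_0^\mathfrak{p})^{\mathbb Z}$ one has, for any $X,Y$,
\[\mathrm{hom}_{\mathcal{S}}(\pi_\mathfrak{p}X,\pi_\mathfrak{p}Y)\cong\varinjlim\ \mathrm{hom}_{\mathcal{O}}(X',Y/Y'),\]
the limit over subobjects $X'\subseteq X$ with $X/X'\in\mathcal{Q}_\mathfrak{p}$ and $Y'\subseteq Y$ with $Y'\in\mathcal{Q}_\mathfrak{p}$. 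Taking $X=L_z\langle d\rangle$ with $z\in\mathtt{X}_\mathfrak{p}^{\mathrm{long}}$, simplicity forces $X'=L_z\langle d\rangle$; taking $Y=N$, the $\mathcal{Q}_\mathfrak{p}$-torsion-freeness just established forces $Y'=0$. Hence
\[\mathrm{hom}_{\mathcal{S}}(L_z^\mathfrak{p}\langle d\rangle,\pi_\mathfrak{p}(N))\cong\mathrm{hom}_{\mathcal{O}}(L_z\langle d\rangle,N),\]
and since every simple of $(\mathcal{S}_0^\mathfrak{p})^{\mathbb Z}$ is of this form, the socle multiplicities of $\pi_\mathfrak{p}(N)$ and of $\pi_\mathfrak{p}(\soc N)$ agree in every degree. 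This closes the reverse inclusion cleanly, without any appeal to $\widehat N$; it is, in effect, what the paper silently uses when it writes ``the claim of the proposition follows by applying $\pi_\mathfrak{p}$.''
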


\begin{proof}
As mentioned in Subsection~\ref{s4.3}, we have the isomorphisms
\begin{displaymath}
\overline{\Delta}^\mathfrak{p}_x\cong
\pi_{\pi}(\Delta_{w_0^\mathfrak{p}x})\langle\ell(w_0^\mathfrak{p})\rangle
\quad\text{ and }\quad
\overline{\Delta}^\mathfrak{p}_y\cong
\pi_{\pi}(\Delta_{w_0^\mathfrak{p}y})\langle\ell(w_0^\mathfrak{p})\rangle.
\end{displaymath}
Note that 
$w_0^\mathfrak{p}x,w_0^\mathfrak{p}y\in \mathtt{X}_\mathfrak{p}^{\mathrm{short}}$. 
Therefore
we may apply Lemma~\ref{lem4-11} to conclude that the socle of 
the cokernel of the inclusion 
$\Delta_{w_0^\mathfrak{p}x}\subset \Delta_{w_0^\mathfrak{p}y}$
contains only $L_z$ such that $z\in \mathtt{X}_\mathfrak{p}^{\mathrm{long}}$.
Now the claim of the proposition follows by applying $\pi_\mathfrak{p}$. 
\end{proof}

\subsection{Ungraded statements in type $A$}\label{s9.4}

In type $A$, the above results can be summarized
and made more precise as follows.

\begin{proposition}\label{prop4-12}
In type $A$, for $x,y\in \mathtt{X}_\mathfrak{p}^{\mathrm{long}}$ such 
that  $x\geq y$, the cokernel of $\overline{\Delta}^\mathfrak{p}_x\subset
\overline{\Delta}^\mathfrak{p}_y$ is isomorphic
to the (multiplicity-free) direct sum of all simples $L_z^{\mathfrak{p}}$, where 
$z\in \mathtt{X}_\mathfrak{p}^{\mathrm{long}}$ and
$z\in \Phi(\mathbf{BM}(x)\setminus\mathbf{BM}(y))$.
\end{proposition}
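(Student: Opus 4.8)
The plan is to combine Proposition~\ref{prop4-12'} with the explicit type-$A$ description of socles of cokernels of Verma inclusions from Corollary~\ref{corthm2}, and then bootstrap from the socle to the whole cokernel using the rigidity/multiplicity-free structure available in type $A$.

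First I would apply Proposition~\ref{prop4-12'}: since $x,y\in \mathtt{X}_\mathfrak{p}^{\mathrm{long}}$ with $x\geq y$, we have $w_0^\mathfrak{p}x, w_0^\mathfrak{p}y\in \mathtt{X}_\mathfrak{p}^{\mathrm{short}}$ with $w_0^\mathfrak{p}x\geq w_0^\mathfrak{p}y$ (the map $w_0^\mathfrak{p}\cdot{}_-$ is an order-reversing bijection between long and short cosets, so one must be careful about which inequality survives — in fact on shortest representatives the Bruhat order is induced, so $x\geq y$ on the long side gives $w_0^\mathfrak px\geq w_0^\mathfrak py$ on the short side), and
\[
\soc(\overline{\Delta}^\mathfrak{p}_y/\overline{\Delta}^\mathfrak{p}_x)\cong \pi_\mathfrak{p}\big(\soc(\Delta_{w_0^\mathfrak{p}y}/\Delta_{w_0^\mathfrak{p}x})\big).
\]
Next, Corollary~\ref{corthm2}\eqref{corthm2.2} identifies the right-hand socle (before applying $\pi_\mathfrak p$) as $\bigoplus_{z}L_z$, where $z$ runs over $\Phi(\mathbf{BM}(w_0^\mathfrak{p}x)\setminus \mathbf{BM}(w_0^\mathfrak{p}y))$, multiplicity-free. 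Here I would need the combinatorial identity $\mathbf{BM}(w_0^\mathfrak px)\setminus\mathbf{BM}(w_0^\mathfrak py)$ matches $\mathbf{BM}(x)\setminus\mathbf{BM}(y)$ in the statement — this requires a short argument that the relevant bigrassmannians below $w_0^\mathfrak pw$ that lie in $\mathcal{J}$ correspond appropriately; alternatively the statement should be read with the $w_0^\mathfrak p$-twisted version, and I would check the two agree using that $\Phi$ only sees the pair of "descent" simple reflections, which are governed by left/right descents that behave predictably under $w_0^\mathfrak p\cdot{}_-$ on short cosets. After applying $\pi_\mathfrak p$, by Lemma~\ref{lem4-11} all these $L_z$ survive (none are killed), so the socle of the $\mathcal S$-cokernel is this multiplicity-free sum.

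The remaining, and I expect main, obstacle is upgrading from "the socle is multiplicity-free and equals this sum" to "the whole cokernel is this semisimple sum". For this I would argue that the cokernel $\overline{\Delta}^\mathfrak{p}_y/\overline{\Delta}^\mathfrak{p}_x$ is itself semisimple. The natural route: the cokernel $\Delta_{w_0^\mathfrak py}/\Delta_{w_0^\mathfrak px}$ in $\mathcal O$ need not be semisimple, but $\pi_\mathfrak p$ kills a large Serre subcategory, and one should show that every composition factor $L_z^\mathfrak p$ of the $\mathcal S$-cokernel with $z\in\mathcal J$ already sits in the socle. Concretely, I would use that in type $A$ the simple subquotients $L_z$ of $\Delta_e$ with $z\in\mathcal J$ all lie in the socle of $\Delta_e/\Delta_w$ once we pass to the appropriate sub-cokernel (this is the content of Theorem~\ref{thm2}\eqref{thm2.3}--\eqref{thm2.4} combined with Corollary~\ref{corthm2}), together with the fact that after applying $\pi_\mathfrak p$ the only surviving simples are the $L_z^\mathfrak p$ with $z\in\mathtt{X}_\mathfrak p^{\mathrm{long}}$ — and one checks that all such surviving $z$ lie in $\mathcal J$ because $\pi_\mathfrak p$ applied to a Verma quotient has composition factors only among $\mathcal{J}$ by the structure of $\overline\Delta^\mathfrak p_e$ (whose composition factors indexed by long cosets all lie in $\mathcal J\cup\{\text{lower}\}$, and the lower ones are below the socle degree). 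I would make this precise by a degree/grading argument: using the graded refinement in Proposition~\ref{prop4-12'} and Proposition~\ref{cor2graded}, each surviving $L_z^\mathfrak p$ occurs in a single known degree, and comparing with the top degree of $\overline{\Delta}^\mathfrak p_y/\overline{\Delta}^\mathfrak p_x$ forces all composition factors into the bottom layer, hence semisimplicity. Finally, assembling: the cokernel is semisimple, multiplicity-free, and equals $\bigoplus_{z}L_z^\mathfrak p$ over $z\in\mathtt X_\mathfrak p^{\mathrm{long}}\cap\Phi(\mathbf{BM}(x)\setminus\mathbf{BM}(y))$, which is the claim; note the intersection with $\mathtt X_\mathfrak p^{\mathrm{long}}$ is automatic since $\Phi$ lands in $\mathcal J$ and Lemma~\ref{lem4-11} guarantees survival, but I would keep it in the statement for clarity.
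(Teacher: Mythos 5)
Your first two steps reproduce the paper's own (one-line) argument: apply Proposition~\ref{prop4-12'} to pass to the socle of a cokernel of a Verma inclusion in $\mathcal{O}$, then identify that socle by Corollary~\ref{corthm2}\eqref{corthm2.2}. Combined with Lemma~\ref{lem4-11}, this yields
\begin{displaymath}
\soc\big(\overline{\Delta}^\mathfrak{p}_y/\overline{\Delta}^\mathfrak{p}_x\big)\cong\bigoplus_{z}L_z^\mathfrak{p},
\qquad z\in\Phi\big(\mathbf{BM}(w_0^\mathfrak{p}x)\setminus\mathbf{BM}(w_0^\mathfrak{p}y)\big),
\end{displaymath}
which is exactly what Proposition~\ref{extandsocinS} then needs for Theorem~\ref{thm4-21}. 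Up to that point the approach matches the paper's.

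Both reconciliations you then propose, however, fail. The index sets $\Phi(\mathbf{BM}(w_0^\mathfrak{p}x)\setminus\mathbf{BM}(w_0^\mathfrak{p}y))$ and $\Phi(\mathbf{BM}(x)\setminus\mathbf{BM}(y))\cap\mathtt{X}_\mathfrak{p}^{\mathrm{long}}$ are genuinely different, so the ``short argument'' you hope for does not exist. In the $\mathfrak{sl}_3$ example of Section~\ref{s5.1}, with $W^\mathfrak{p}=\{e,s\}$, take $x=st$, $y=s$. Then $\Phi(\mathbf{BM}(st)\setminus\mathbf{BM}(s))\cap\mathtt{X}_\mathfrak{p}^{\mathrm{long}}=\{ts\}\cap\{s,st,w_0\}=\varnothing$, while $\Phi(\mathbf{BM}(t)\setminus\mathbf{BM}(e))=\{\Phi(t)\}=\{s\}$, and indeed $\soc(\overline{\Delta}^\mathfrak{p}_s/\overline{\Delta}^\mathfrak{p}_{st})\cong L_s^\mathfrak{p}$. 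The $w_0^\mathfrak{p}$-shift in the indices is not optional, and one cannot argue it away via descents.

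More seriously, the cokernel is not semisimple in general, so the upgrade from socle to whole cokernel that you flag as the ``main obstacle'' is not merely a gap but an obstacle that cannot be overcome. Again in the $\mathfrak{sl}_3$ example, take $x=w_0$, $y=s$: the module $\overline{\Delta}^\mathfrak{p}_s$ is uniserial with Loewy layers $L_s^\mathfrak{p}$, $L_{st}^\mathfrak{p}$, $L_{w_0}^\mathfrak{p}$ (displayed in Section~\ref{s5.1}), so $\overline{\Delta}^\mathfrak{p}_s/\overline{\Delta}^\mathfrak{p}_{w_0}$ is the nontrivial length-two uniserial module with head $L_s^\mathfrak{p}$ and socle $L_{st}^\mathfrak{p}$. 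Your key claim that every composition factor $L_z^\mathfrak{p}$ with $z\in\mathcal{J}$ already lies in the socle is exactly what fails here: $s\in\mathcal{J}$ and $L_s^\mathfrak{p}$ sits strictly above the socle, so the proposed degree-comparison cannot collapse the module into one layer. The downstream application in Theorem~\ref{thm4-21} uses only the socle, and the cited ingredients prove precisely the socle statement with the $w_0^\mathfrak{p}$-shifted indices; you should prove that version and stop, rather than try to establish semisimplicity of the full cokernel or the unshifted index set.
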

\begin{proof}
This follows from Corollary~\ref{corthm2} and Proposition~\ref{prop4-12'}. 
\end{proof}

\begin{theorem}\label{thm4-21}
In type $A$, let $x,y\in \mathtt{X}_\mathfrak{p}^{\mathrm{long}}$. Then we have
\begin{equation}\label{eq1}
\dim\mathrm{Ext}^1_{\mathcal{S}}(L^{\mathfrak{p}}_x,
\overline{\Delta}^{\mathfrak{p}}_y)=
\begin{cases}
\mathbf{c}(w_0w_0^\mathfrak{p}y),& x=w_0;\\
1,& x\in\Phi(\mathbf{BM}(y));\\
0,&  \text{otherwise}.
\end{cases}
\end{equation}
\end{theorem}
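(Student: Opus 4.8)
The plan is to reduce the statement to the previously established results, handling the three cases separately. The first case $x = w_0$ is already Theorem~\ref{fromw0inS}: indeed, $\mathbf{c}(w_0 w_0^{\mathfrak p} y)$ is exactly the value computed there, so nothing new is needed. For the remaining cases we invoke Proposition~\ref{extandsocinS}, which identifies $\dim\Ext^1_{\mathcal S}(L^{\mathfrak p}_x, \overline{\Delta}^{\mathfrak p}_y)$ (summed over graded shifts) with the multiplicity of $L^{\mathfrak p}_x$ in the socle of the cokernel $\overline{\Delta}^{\mathfrak p}_e/\overline{\Delta}^{\mathfrak p}_y$. So the whole problem becomes the computation of that socle.

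Next I would apply Proposition~\ref{prop4-12} with the choice $x = e$ (noting $e \in \mathtt{X}_{\mathfrak p}^{\mathrm{long}}$ is automatic since $e$ is the shortest element of its coset only when $W^{\mathfrak p}$ is trivial — so one should instead take the largest long element, or more cleanly argue directly via Proposition~\ref{prop4-12'} with $\overline{\Delta}^{\mathfrak p}_e$ interpreted as $\pi_{\mathfrak p}(\Delta_{w_0^{\mathfrak p}})\langle \ell(w_0^{\mathfrak p})\rangle$). Proposition~\ref{prop4-12'} then tells us that the socle of $\overline{\Delta}^{\mathfrak p}_e/\overline{\Delta}^{\mathfrak p}_y$ is $\pi_{\mathfrak p}(\soc \Delta_{w_0^{\mathfrak p}}/\Delta_{w_0^{\mathfrak p}y})$, and Corollary~\ref{corthm2}(ii) describes this socle in category $\cO$ as the simples $L_z$ with $z$ corresponding, under $\Phi$, to elements of $\mathbf{BM}(w_0^{\mathfrak p}y) \setminus \mathbf{BM}(w_0^{\mathfrak p})$. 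Since $w_0^{\mathfrak p}y \in \mathtt{X}_{\mathfrak p}^{\mathrm{short}}$, the set $\mathbf{BM}(w_0^{\mathfrak p})$ is empty (or absorbed), and one identifies $\mathbf{BM}(w_0^{\mathfrak p} y)$ under the passage to the $\mathcal S$-category with $\mathbf{BM}(y)$ in the long-coset indexing. After applying $\pi_{\mathfrak p}$, which kills exactly the $L_z$ with $z \notin \mathtt{X}_{\mathfrak p}^{\mathrm{long}}$, and invoking Lemma~\ref{lem4-11} to see that no such $L_z$ occurs in the socle anyway, the surviving socle constituents are precisely the $L^{\mathfrak p}_x$ with $x \in \Phi(\mathbf{BM}(y))$, each with multiplicity one.

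Assembling: for $x \neq w_0$, the multiplicity $[\soc(\overline{\Delta}^{\mathfrak p}_e/\overline{\Delta}^{\mathfrak p}_y) : L^{\mathfrak p}_x]$ is $1$ if $x \in \Phi(\mathbf{BM}(y))$ and $0$ otherwise, which via Proposition~\ref{extandsocinS} gives exactly the second and third cases of \eqref{eq1}. Combined with Theorem~\ref{fromw0inS} for the case $x = w_0$, this establishes the theorem. The main obstacle I anticipate is purely bookkeeping: one must check carefully that the map $\Phi$ and the sets $\mathbf{BM}(\cdot)$ behave correctly under the translation between the short-representative indexing used in $\cO$ (where Corollary~\ref{corthm2} lives) and the long-representative indexing used for $\mathcal S_0^{\mathfrak p}$, i.e. that $\mathbf{BM}(w_0^{\mathfrak p}y)$ in $\cO$ corresponds, after projection, to $\mathbf{BM}(y)$ as it appears in the statement — this requires knowing that all maximal join-irreducibles below $w_0^{\mathfrak p}y$ already lie in (or have $\Phi$-image in) $\mathtt{X}_{\mathfrak p}^{\mathrm{long}}$, which is exactly the content of Lemma~\ref{lem4-11}. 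Everything else is a direct citation of the quoted results.
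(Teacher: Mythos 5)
Your strategy is exactly the paper's: Theorem~\ref{fromw0inS} for $x=w_0$, then Proposition~\ref{extandsocinS} to reduce the remaining cases to a socle computation, resolved via Proposition~\ref{prop4-12'} together with Theorem~\ref{thm2}/Corollary~\ref{corthm2}. You also correctly single out what the genuine obstacle is, namely matching the $\mathbf{BM}(\cdot)$-indexing that comes out of the $\cO$-computation with the one appearing in the statement.

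However, that obstacle is not dispatched in your write-up. Two concrete problems. First, the assertion that $\mathbf{BM}(w_0^\mathfrak{p})$ is ``empty (or absorbed)'' is false unless $W^\mathfrak{p}$ is trivial; but this is a detour anyway, since $\overline{\Delta}^\mathfrak{p}_e=\pi_\mathfrak{p}(\Delta_e)$, so the relevant cokernel upstairs is simply $\Delta_e/\Delta_{w_0^\mathfrak{p}y}$ and Theorem~\ref{thm2}\eqref{thm2.4} applies directly, no subtraction of $\mathbf{BM}(w_0^\mathfrak{p})$ needed. Second, and more seriously, the chain Proposition~\ref{extandsocinS} $\to$ Proposition~\ref{prop4-12'} $\to$ Theorem~\ref{thm2}\eqref{thm2.4} yields the answer in terms of $\Phi\bigl(\mathbf{BM}(w_0^\mathfrak{p}y)\bigr)$, i.e.\ the join-irreducibles below the \emph{short} coset representative $w_0^\mathfrak{p}y$. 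Your claim that this ``identifies'' with $\mathbf{BM}(y)$ for the long representative $y$ is left unproven, and it is not an identity: already in the $\mathfrak{sl}_3$ example of Subsection~\ref{s5.1}, for $y=st$ one has $\mathbf{BM}(st)=\{st\}$ with $\Phi(st)=ts\notin\mathtt{X}_\mathfrak{p}^{\mathrm{long}}$, which would predict $\Ext^1_{\mathcal S}(L^\mathfrak{p}_x,\overline{\Delta}^\mathfrak{p}_{st})=0$ for all $x\neq w_0$, whereas the table shows $\dim\Ext^1_{\mathcal S}(L^\mathfrak{p}_s,\overline{\Delta}^\mathfrak{p}_{st})=1$; the short representative $w_0^\mathfrak{p}\cdot st=t$ gives $\Phi(\mathbf{BM}(t))=\{s\}$, which does match. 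So the correct output of your (and the paper's) argument is the formula with $\mathbf{BM}(w_0^\mathfrak{p}y)$ in place of $\mathbf{BM}(y)$; the step you label as ``purely bookkeeping'' is where the discrepancy lives and it needs to be addressed rather than waved through, either by proving the two expressions agree (they do not, in general) or by reading the statement with the short-representative convention (compare the notation $\mathbf{BM}(\underline{y})$ used in Theorem~\ref{thm75} and Proposition~\ref{prop-singularA}).
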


\begin{proof}
The case $x=w_0$ is covered by Theorem~\ref{fromw0inS}.
For $x\neq w_0$, Formula~\eqref{eq1} 
follows from Proposition~\ref{prop4-12} and Proposition~\ref{extandsocinS}.
\end{proof}

\subsection{Graded statement in type $A$}\label{s9.5}

We can also explicitly determine the degree shifts for 
the graded non-zero extensions in Theorem~\ref{thm4-21}.

\begin{proposition}\label{prop-4-31}
Assume we are in type $A$. Let $y\in \mathtt{X}_\mathfrak{p}^{\mathrm{long}}$
and $x=\Phi((i,j,k))$, for some 
$(i,j,k)\in \mathbf{BM}(y)\cap
\mathtt{X}_\mathfrak{p}^{\mathrm{long}}$.
Then the unique degree $m\in\mathbb{Z}$ for which
$\dim\mathrm{ext}_{}^{1}(L^\mathfrak{p}_x\langle -m\rangle,\overline\Delta^{\mathfrak{p}}_y\langle -\ell(y)\rangle)=1$ is 
\[m = \frac{(n-1)(n-2)}{2}+\vert i-j\vert+2k.\]
\end{proposition}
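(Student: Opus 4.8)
The plan is to reduce the graded statement to the already-established graded computations in category $\mathcal{O}$, exactly as the ungraded Theorem~\ref{thm4-21} was reduced to Corollary~\ref{cor2} (equivalently Proposition~\ref{cor2graded}) in the regular block. The key point is that Proposition~\ref{extandsocinS} and Proposition~\ref{prop4-12'} already come with ``standard shift'' graded refinements, so I only need to track degrees carefully through the Serre quotient functor $\pi_\mathfrak{p}$.

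First I would recall that, by the graded version of Proposition~\ref{extandsocinS}, the unique degree $m$ with $\dim\mathrm{ext}^1(L^\mathfrak{p}_x\langle -m\rangle,\overline\Delta^\mathfrak{p}_y\langle-\ell(y)\rangle)=1$ is precisely the degree $d$ for which $L^\mathfrak{p}_x\langle d\rangle$ occurs in $\soc(\overline\Delta^\mathfrak{p}_e/\overline\Delta^\mathfrak{p}_y)$, where all proper standard objects carry their standard shift $\langle-\ell(\cdot)\rangle$. Next, by the graded part of Proposition~\ref{prop4-12'} (with $x$ there replaced by $e$), this socle is $\pi_\mathfrak{p}$ applied to $\soc(\Delta_{w_0^\mathfrak{p}e}/\Delta_{w_0^\mathfrak{p}y})$ with the standard shifts $\langle -\ell(w_0^\mathfrak{p})\rangle$ and $\langle-\ell(w_0^\mathfrak{p}y)\rangle$ respectively. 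Since $\pi_\mathfrak{p}$ is a graded exact functor sending $L_z\langle d\rangle$ to $L^\mathfrak{p}_z\langle d\rangle$ for $z\in\mathtt{X}^{\mathrm{long}}_\mathfrak{p}$ and killing the other simples, the degree in which $L^\mathfrak{p}_x$ appears in $\soc(\overline\Delta^\mathfrak{p}_e/\overline\Delta^\mathfrak{p}_y)$ equals the degree in which $L_x$ appears in $\soc(\Delta_{w_0^\mathfrak{p}}/\Delta_{w_0^\mathfrak{p}y})$, each Verma carrying its standard shift.

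Then I would identify that last degree using the already-proved graded statement in $\mathcal{O}$. Writing $\underline y := w_0^\mathfrak{p}y\in\mathtt{X}^{\mathrm{short}}_\mathfrak{p}$, the hypothesis $(i,j,k)\in\mathbf{BM}(y)\cap\mathtt{X}^{\mathrm{long}}_\mathfrak{p}$ needs to be translated to the statement that the corresponding bigrassmannian element lies in $\mathbf{BM}(\underline y)$ (this is the content of the socle description via $\mathbf{BM}$, using Corollary~\ref{corthm2} and that $w_0^\mathfrak{p}(\cdot)$ is a bijection $\mathtt{X}^{\mathrm{long}}_\mathfrak{p}\to\mathtt{X}^{\mathrm{short}}_\mathfrak{p}$). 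With that translation in hand, Proposition~\ref{cor2graded} applied to $\underline y$ gives that $L_x\langle -m\rangle$ sits in $\soc(\Delta_e/\Delta_{\underline y})$ exactly for $m=\frac{(n-1)(n-2)}{2}+|i-j|+2k$; one then checks that passing from $\soc(\Delta_e/\Delta_{\underline y})$ with shift $\langle 0\rangle$ to $\soc(\Delta_{w_0^\mathfrak{p}}/\Delta_{\underline y})$ with the standard shifts does not alter this degree, because the inclusion $\Delta_{w_0^\mathfrak{p}}\langle-\ell(w_0^\mathfrak{p})\rangle\hookrightarrow\Delta_e$ matches the top of $\Delta_{w_0^\mathfrak{p}}$ with the degree-$\ell(w_0^\mathfrak{p})$ part of $\Delta_e$ in a way that is compatible with the degree bookkeeping already implicit in Proposition~\ref{prop4-12'}.

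The main obstacle I expect is purely the degree bookkeeping: verifying that the standard graded shifts on the proper standard objects in $\mathcal{S}_0^\mathfrak{p}$ are compatible, under $\pi_\mathfrak{p}$, with the standard graded shifts on the Verma modules in $\mathcal{O}$, so that the degree $m$ really is unchanged rather than shifted by some function of $\ell(w_0^\mathfrak{p})$. Concretely, one must confirm that $\overline\Delta^\mathfrak{p}_w\langle-\ell(w)\rangle\cong\pi_\mathfrak{p}(\Delta_{w_0^\mathfrak{p}w}\langle-\ell(w_0^\mathfrak{p}w)\rangle)$ — which follows from the isomorphism $\overline\Delta^\mathfrak{p}_w\cong\pi_\mathfrak{p}(\Delta_{w_0^\mathfrak{p}w}\langle\ell(w_0^\mathfrak{p})\rangle)$ recorded in Subsection~\ref{s4.3} together with $\ell(w_0^\mathfrak{p}w)=\ell(w_0^\mathfrak{p})+\ell(w)$ for $w\in\mathtt{X}^{\mathrm{long}}_\mathfrak{p}$ composed with the short representative — and that this is exactly the normalization used in the graded version of Proposition~\ref{prop4-12'}. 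Once this is pinned down, the formula for $m$ is inherited verbatim from Proposition~\ref{cor2graded}, and there is nothing further to compute.
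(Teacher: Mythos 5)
Your argument is correct and follows the paper's own route exactly: the paper's proof is precisely the citation of Propositions~\ref{prop4-12'}, \ref{extandsocinS} and \ref{cor2graded}, and your write-up simply makes the degree bookkeeping through $\pi_\mathfrak{p}$ explicit. The normalization check $\overline\Delta^\mathfrak{p}_w\langle-\ell(w)\rangle\cong\pi_\mathfrak{p}\bigl(\Delta_{w_0^\mathfrak{p}w}\langle-\ell(w_0^\mathfrak{p}w)\rangle\bigr)$ that you flag as the main obstacle is indeed the only point requiring verification, and you verify it correctly.
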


\begin{proof}
This follows from Proposition~\ref{prop4-12'}, Proposition~\ref{extandsocinS} and Proposition~\ref{cor2graded}.
\end{proof}

\subsection{First extension from other simples to proper standard modules in other types}
\label{9.6}

Proposition~\ref{extandsocinS} and Proposition~\ref{prop4-12'} translate all graded and ungraded results from \cite{KMM2} to the corresponding statements on the first extension spaces.
In particular, we have
\begin{itemize}
    \item for $x,y\in \mathtt{X}_\mathfrak{p}^{\mathrm{long}}$, we have
 $\Ext^1(L^\mathfrak{p}_x,\overline\Delta^\mathfrak{p}_w) = 0 $ unless $x\in \mathcal J$;
 \item if $x\in \mathcal J$, we have
    \[\dim\Ext^1(L^\mathfrak{p}_x,\overline\Delta_w^\mathfrak{p})\leq |_s\mathbf{BM}_t(w) |,\]
     where  $s,t$ are simple reflections in $W$ such that $sx>x$ and $xt>x$.
\end{itemize}

We emphasize that the main point of giving the above bound is to have a general 
statement, and that the bound $|_s\mathbf{BM}_t(w) |$ is a gross exaggeration
in most of the cases. For computing/bounding first extension spaces between 
simple and proper standard modules, it is strongly recommended to ignore 
the bound $|_s\mathbf{BM}_t(w) |$ and instead look at  \cite[Section~5]{KMM2} 
(see also the discussion after \cite[Theorem~F]{KMM2}).

\section{First extension from a simple to a 
standard module in $\mathcal{S}^{\mathfrak{p}}$}\label{s6}

\subsection{Elementary general observations}\label{s6.1}

Since $w_0$ corresponds to the minimum element for the partial order
with respect to which $A^\mathfrak{p}$ is stratified,
the standard object $\Delta^\mathfrak{p}_{w_0}$ is a tilting object. 
Due to the special properties of $A^\mathfrak{p}$ mentioned at the 
end of Subsection~\ref{s4.3}, it is also a cotilting module.
The simple object $L^\mathfrak{p}_{w_0}$ is a proper standard module.
Therefore, due to the homological orthogonality of 
proper standard and cotilting modules, we have
\begin{displaymath}
\mathrm{Ext}^i_{\mathcal{S}}(L^\mathfrak{p}_{w_0},
\Delta^\mathfrak{p}_{w_0})=0,\quad\text{ for all }i>0.
\end{displaymath}

The projective-injective object $I_{w_0}^\mathfrak{p}$ is a tilting object
and is thus the tilting envelope of the standard object 
$\Delta^\mathfrak{p}_{w_0^\mathfrak{p}}$. Therefore the cokernel of 
the inclusion $\Delta^\mathfrak{p}_{w_0^\mathfrak{p}}\hookrightarrow
I_{w_0}^\mathfrak{p}$ has a standard filtration. As the socle of 
each standard object is isomorphic to $L^\mathfrak{p}_{w_0}$, it follows
that the only simple object appearing in the socle of the cokernel of
the above inclusion is $L^\mathfrak{p}_{w_0}$. Consequently,
\begin{displaymath}
\mathrm{Ext}^1_{\mathcal{S}}(L^\mathfrak{p}_{x},
\Delta^\mathfrak{p}_{w_0^\mathfrak{p}})=0,\quad\text{ for all }
x\in \mathtt{X}_\mathfrak{p}^{\mathrm{long}}\setminus\{w_0\}.
\end{displaymath}
We will generalize this result below in Subsection~\ref{s6.2}.

\subsection{Reduction to category $\mathcal{O}$}\label{s6.3}

The following statement reduces the problem of computing 
first extensions from simple to standard objects in $\mathcal{S}$
to the problem of computing first extensions between certain
modules in $\mathcal{O}$.

\begin{proposition}\label{prop65}
For $x,y\in \mathtt{X}_\mathfrak{p}^{\mathrm{long}}$
and $i\in\mathbb{Z}$,
we have an isomorphism
\begin{displaymath}
\mathrm{ext}^1_{\mathcal{S}}
(L^\mathfrak{p}_x,\Delta^\mathfrak{p}_y\langle i\rangle)\cong
\mathrm{ext}^1(L_x,Q_y\langle i\rangle).
\end{displaymath}
\end{proposition}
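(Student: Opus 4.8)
The plan is to realize both sides as $\operatorname{Ext}^1$-groups computed against the \emph{same} module $Q_y$ in $\mathcal{O}$, exploiting that $\Delta^\mathfrak{p}_y=\pi_\mathfrak{p}(Q_y)$ by definition and that $L_x$ is annihilated by nothing problematic since $x\in\mathtt{X}^{\mathrm{long}}_\mathfrak{p}$. The Serre quotient functor $\pi_\mathfrak{p}\colon\mathcal{O}_0^\mathbb{Z}\to(\mathcal{S}_0^\mathfrak{p})^\mathbb{Z}$ is exact, so it induces maps on $\operatorname{ext}$-groups; the whole point is to show that $\pi_\mathfrak{p}$ induces an isomorphism $\operatorname{ext}^1(L_x,Q_y\langle i\rangle)\xrightarrow{\sim}\operatorname{ext}^1_\mathcal{S}(L^\mathfrak{p}_x,\Delta^\mathfrak{p}_y\langle i\rangle)$. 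First I would recall the standard fact (from the theory of Serre quotients / recollement, e.g.\ as used already in the proofs of Theorem~\ref{fromw0inS} and Proposition~\ref{extandsocinS}) that $\pi_\mathfrak{p}$ is the localization at the Serre subcategory $\mathcal{Q}_\mathfrak{p}$ generated by the $L_w\langle i\rangle$ with $w\notin\mathtt{X}^{\mathrm{long}}_\mathfrak{p}$, and that it has a right adjoint section functor on the relevant subcategories; more concretely, $(\mathcal{S}_0^\mathfrak{p})^\mathbb{Z}$ is equivalent to the full subcategory of projectively presentable objects, and $Q_y$ is constructed precisely so that its image is $\Delta^\mathfrak{p}_y$.

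The key technical step is the comparison of extension groups. I would use the projective cover $P_x\tto L_x$ in $\mathcal{O}_0^\mathbb{Z}$ with kernel $K$, giving the exact sequence $\operatorname{hom}(P_x,Q_y\langle i\rangle)\to\operatorname{hom}(K,Q_y\langle i\rangle)\to\operatorname{ext}^1(L_x,Q_y\langle i\rangle)\to 0$, and the analogous sequence in $\mathcal{S}$ using that $\pi_\mathfrak{p}(P_x)=P^\mathfrak{p}_x$ is the projective cover of $L^\mathfrak{p}_x$ (here one uses $x\in\mathtt{X}^{\mathrm{long}}_\mathfrak{p}$ so that $P^\mathfrak{p}_x\neq 0$ and remains projective). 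The heart of the matter is then to check that $\pi_\mathfrak{p}$ induces isomorphisms $\operatorname{hom}_\mathcal{O}(P_x,Q_y\langle i\rangle)\cong\operatorname{hom}_\mathcal{S}(P^\mathfrak{p}_x,\Delta^\mathfrak{p}_y\langle i\rangle)$ and $\operatorname{hom}_\mathcal{O}(K,Q_y\langle i\rangle)\cong\operatorname{hom}_\mathcal{S}(\pi_\mathfrak{p}(K),\Delta^\mathfrak{p}_y\langle i\rangle)$. The first is immediate because $P_x$ is projective in $\mathcal{O}_0^\mathbb{Z}$ and $\operatorname{hom}(P_x,-)$ just reads off a graded multiplicity space, which is preserved since neither $P_x$ nor $Q_y$ has composition factors outside $\mathtt{X}^{\mathrm{long}}_\mathfrak{p}$ in the relevant spot — actually the cleanest route is: $\operatorname{hom}_\mathcal{S}(\pi_\mathfrak{p} M,\pi_\mathfrak{p} N)$ computes $\operatorname{hom}$ in the quotient, and for $M$ projectively presentable and $N$ in the image of the section functor these agree with $\operatorname{hom}_\mathcal{O}$. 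So I would argue that $Q_y$ lies in (or is close enough to) the essential image of the section functor: by construction $Q_y$ is a quotient of $P_x$-type projectives with all relevant summands indexed by $\mathtt{X}^{\mathrm{long}}_\mathfrak{p}$, hence $Q_y$ is projectively presentable, hence $\pi_\mathfrak{p}$ is fully faithful on $\{$projectively presentable objects$\}$ and in particular $\operatorname{hom}_\mathcal{O}(M,Q_y)\cong\operatorname{hom}_\mathcal{S}(\pi_\mathfrak{p}M,\Delta^\mathfrak{p}_y)$ for any $M\in\mathcal{O}_0^\mathbb{Z}$ admitting such a presentation, and for general $M$ one replaces it by a two-step projective resolution.

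I expect the main obstacle to be the bookkeeping needed to guarantee that $Q_y$ genuinely lies in the subcategory on which $\pi_\mathfrak{p}$ is fully faithful, i.e.\ that one may compute the $\operatorname{ext}^1$ via a projective presentation rather than a full resolution without picking up correction terms from $\mathcal{Q}_\mathfrak{p}$. Concretely, one must verify that $\operatorname{Ext}^1_\mathcal{O}(\mathcal{Q}_\mathfrak{p},Q_y)$ or $\operatorname{Hom}_\mathcal{O}(\mathcal{Q}_\mathfrak{p},Q_y)$ contributions vanish — equivalently, that $Q_y$ has no composition factors $L_w$ with $w\notin\mathtt{X}^{\mathrm{long}}_\mathfrak{p}$ in its socle and, more subtly, that applying $\pi_\mathfrak{p}$ does not kill any homomorphisms \emph{into} $Q_y$ from the syzygy $K$ of $L_x$. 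This is where one invokes that $Q_y$, being a quotient of $P_y$ by traces of projectives indexed \emph{outside} the coset structure, was \emph{defined} to be the maximal quotient of $P_y$ lying in the subcategory; so its socle already avoids $\mathcal{Q}_\mathfrak{p}$, and the needed fully-faithfulness is exactly \cite[Section~4]{MS} (the projectively presentable realization). Once that is in place, comparing the two right-exact sequences termwise and using the five lemma (or just chasing the cokernel description) yields the claimed isomorphism $\operatorname{ext}^1_\mathcal{S}(L^\mathfrak{p}_x,\Delta^\mathfrak{p}_y\langle i\rangle)\cong\operatorname{ext}^1(L_x,Q_y\langle i\rangle)$.
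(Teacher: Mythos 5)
Your overall plan is sound in outline and is dual to what the paper does (the paper uses the injective copresentation $0\to Q_y\to I_{w_0}\to C_y\to 0$ rather than a projective presentation of $L_x$), and you have correctly isolated the crux: one must show that the comparison maps on $\mathrm{hom}$-spaces induced by $\pi_\mathfrak{p}$ are isomorphisms, which comes down to a vanishing statement about $\mathcal{Q}_\mathfrak{p}$ against $Q_y$. However, the justification you give for that vanishing is wrong, and the actual key input is missing.

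The gap is in the sentence ``$Q_y$ is a quotient of $P_x$-type projectives with all relevant summands indexed by $\mathtt{X}^{\mathrm{long}}_\mathfrak{p}$, hence $Q_y$ is projectively presentable, hence \dots\ $\operatorname{hom}_\mathcal{O}(M,Q_y)\cong\operatorname{hom}_\mathcal{S}(\pi_\mathfrak{p}M,\Delta^\mathfrak{p}_y)$ for any $M$.'' Projective presentability of $Q_y$ (i.e.\ $Q_y$ lying in the essential image of the \emph{left} adjoint section of $\pi_\mathfrak{p}$) gives $\operatorname{hom}_\mathcal{O}(Q_y,N)\cong\operatorname{hom}_\mathcal{S}(\Delta_y^\mathfrak{p},\pi_\mathfrak{p}N)$ for all $N$, or $\operatorname{hom}_\mathcal{O}(M,Q_y)\cong\operatorname{hom}_\mathcal{S}(\pi_\mathfrak{p}M,\Delta_y^\mathfrak{p})$ only when $M$ itself is projectively presentable. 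But your $M$ is the first syzygy $K=\mathrm{rad}(P_x)$, and there is no reason for $K$ to be projectively presentable (its projective cover generally involves $P_w$ with $w\notin\mathtt{X}^{\mathrm{long}}_\mathfrak{p}$). What you actually need is that $Q_y$ is $\mathcal{Q}_\mathfrak{p}$-\emph{closed}, i.e.\ $\operatorname{Hom}_\mathcal{O}(B,Q_y)=\operatorname{Ext}^1_\mathcal{O}(B,Q_y)=0$ for all $B\in\mathcal{Q}_\mathfrak{p}$ (equivalently, $Q_y$ lies in the image of the \emph{right} adjoint section, i.e.\ is injectively copresentable with injectives indexed in $\mathtt{X}^{\mathrm{long}}_\mathfrak{p}$), for then $\operatorname{Hom}_\mathcal{O}(M,Q_y)\cong\operatorname{Hom}_\mathcal{S}(\pi_\mathfrak{p}M,\Delta_y^\mathfrak{p})$ holds for \emph{all} $M$. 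You flag this condition as the thing to check but do not prove it.

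The paper supplies exactly this missing input: by \cite[Proposition~2.9]{MS}, $Q_y$ is parabolically induced from a projective--injective object of category $\mathcal{O}$ for the Levi $\mathfrak{a}$ of $\mathfrak{p}$, hence $Q_y$ (and likewise $I_{w_0}$ and the cokernel $C_y$ of $Q_y\hookrightarrow I_{w_0}$) is a direct sum of projective--injective $\mathfrak{a}$-modules. This structural fact forces the socle of $C_y$ to lie in $\mathtt{X}^{\mathrm{long}}_\mathfrak{p}$ and yields the closedness needed for $\pi_\mathfrak{p}$ to preserve the relevant $\mathrm{hom}$-spaces in the long exact sequence. Without this (or an equivalent computation), the vanishing of $\operatorname{Ext}^1_\mathcal{O}(\mathcal{Q}_\mathfrak{p},Q_y)$ — which you correctly identify as the subtle point — is not established, and the appeal to ``fully faithfulness on projectively presentable objects'' does not close the gap because it is the wrong kind of fully faithfulness for the objects at hand.
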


\begin{proof}
The functor $\pi_\mathfrak{p}$ connects $\mathcal{O}_0$ 
and $\mathcal{S}^\mathfrak{p}_0$. Since 
$\pi_\mathfrak{p}(L_x)=L^\mathfrak{p}_x$
and $\pi_\mathfrak{p}(Q_y)=\Delta^\mathfrak{p}_y$, we need to show
that the socle of the cokernel $C_y$ of the natural inclusion $Q_y\hookrightarrow I_{w_0}$
only contains simples of the form $L_z$, where 
$z\in \mathtt{X}_\mathfrak{p}^{\mathrm{long}}$.

Let $\mathfrak{a}$ be the semi-simple part of $\mathfrak{p}$. 
For $w\in \mathtt{X}_\mathfrak{p}^{\mathrm{long}}$, the module
$Q_w$ is obtained by parabolic induction (from $\mathfrak{p}$ to 
$\mathfrak{g}$) of a projective-injective module in the category
$\mathcal{O}$ for $\mathfrak{a}$, see \cite[Proposition~2.9]{MS}.
In particular, $Q_w$ is an (infinite)
direct sum of projective-injective modules in the category
$\mathcal{O}$ for $\mathfrak{a}$. 
Since $I_{w_0}=P_{w_0}$ has a filtration 
whose subquotients are various $Q_w$'s, the module $I_{w_0}$
is an (infinite) direct sum of projective injective module in the category
$\mathcal{O}$ for $\mathfrak{a}$.  Consequently,
 the module $C_y$ 
is an (infinite) direct sum of projective injective module in the category
$\mathcal{O}$ for $\mathfrak{a}$. In particular, for any simple root
$\alpha$ of $\mathfrak{a}$, the action of a non-zero element in
$\mathfrak{a}_{\-\alpha}$ on any simple submodule $L_z$ of $C_y$
is injective.

This means that $sz>z$, for any simple reflection $s\in W^\mathfrak{p}$,
and hence $z\in \mathtt{X}_\mathfrak{p}^{\mathrm{long}}$ as asserted.
Now the statement of the proposition follows by comparing the long exact
sequence obtained by applying $\mathrm{hom}^1(L_x,{}_-\langle i\rangle)$
to the short exact sequence 
\begin{displaymath}
0\to  Q_y\to I_{w_0}\to C_y\to 0
\end{displaymath}
with the image of this long exact sequence under $\pi_\mathfrak{p}$. 
\end{proof}

As a corollary, we have the following general observation:

\begin{corollary}\label{cor67}
For $x,y\in \mathtt{X}_\mathfrak{p}^{\mathrm{long}}$
and $i\in\mathbb{Z}$. If
$\mathrm{ext}^1_{\mathcal{S}}
(L^\mathfrak{p}_x,\Delta^\mathfrak{p}_y\langle i\rangle)\neq 0$,
then $x\in \mathcal{J}\cup\{w_0\}$.
\end{corollary}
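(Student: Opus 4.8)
The plan is to deduce Corollary~\ref{cor67} directly from Proposition~\ref{prop65} together with the vanishing results on extensions from simples to Verma modules in category $\cO$ recalled in Section~\ref{s3}. By Proposition~\ref{prop65}, we have $\mathrm{ext}^1_{\mathcal{S}}(L^\mathfrak{p}_x,\Delta^\mathfrak{p}_y\langle i\rangle)\cong \mathrm{ext}^1(L_x,Q_y\langle i\rangle)$, so it suffices to show that $\mathrm{ext}^1(L_x,Q_y\langle i\rangle)=0$ whenever $x\notin \mathcal{J}\cup\{w_0\}$. The key structural fact is that $Q_y$ has a Verma flag: indeed, $Q_y$ is a quotient of $P_y$ by the trace of certain projectives, and more to the point, as recalled in the proof of Proposition~\ref{prop65} it is obtained by parabolic induction of a projective-injective module, hence in particular has a filtration whose subquotients are Verma modules $\Delta_z\langle * \rangle$ (with $z$ ranging over $W^\mathfrak{p}y$, up to shifts).

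First I would record the observation that if a module $M$ in $\cO_0^{\mathbb Z}$ admits a filtration with subquotients of the form $\Delta_z\langle m\rangle$, then for any $x\neq w_0$ the space $\mathrm{ext}^1(L_x,M\langle i\rangle)$ embeds (via the long exact sequences coming from the filtration) into a direct sum of spaces $\mathrm{ext}^1(L_x,\Delta_z\langle m'\rangle)$; this uses that $\mathrm{hom}$ and $\mathrm{ext}^1$ are both finite-dimensional and that $\mathrm{ext}^1(L_x,\Delta_z\langle m'\rangle)$ contributes to a subquotient of $\mathrm{ext}^1(L_x,M\langle i\rangle)$ only through these pieces. Then I would invoke the fact recalled in Subsection~\ref{s3.6} (from \cite{KMM4}): for $x,w\in W$ with $x\neq w_0$, one has $\mathrm{Ext}^1(L_x,\Delta_w)=0$ unless $x\in\mathcal{J}$. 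Combining these, for $x\notin \mathcal{J}\cup\{w_0\}$ every relevant $\mathrm{ext}^1(L_x,\Delta_z\langle m'\rangle)$ vanishes, hence $\mathrm{ext}^1(L_x,Q_y\langle i\rangle)=0$, which is exactly what we need.

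The remaining bookkeeping is to handle the case $x\neq w_0$ carefully (the case $x=w_0$ is vacuous since $w_0\in\mathcal{J}\cup\{w_0\}$ trivially, so no argument is required there) and to make sure the filtration argument is valid for a possibly infinite direct sum / infinite filtration, since $Q_y$ need not be finitely generated. This is not a serious obstacle: $L_x$ is finitely generated, so any map $L_x\to Q_y$ or any extension of $L_x$ by $Q_y$ factors through a finitely generated subquotient, and one can reduce to a finite portion of the filtration; alternatively, one works in the graded setting where each graded piece is finite-dimensional and the degree $i$ is fixed, so only finitely many layers of the filtration can contribute in a bounded degree range.

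The main obstacle I anticipate is precisely this passage between the filtration of $Q_y$ and the vanishing of $\mathrm{ext}^1$ — i.e. being precise about which $\mathrm{ext}^1(L_x,\Delta_z\langle m\rangle)$ can appear and why the dévissage is exact in the (graded, possibly infinite) setting. Everything else is a direct citation: Proposition~\ref{prop65} does the reduction to $\cO$, the description of $Q_y$ via parabolic induction in \cite{MS} gives the Verma flag, and the cited result from \cite{KMM4} supplies the vanishing $\mathrm{Ext}^1(L_x,\Delta_w)=0$ for $x\notin\mathcal{J}\cup\{w_0\}$. One then concludes that $x\in\mathcal{J}\cup\{w_0\}$ is necessary for $\mathrm{ext}^1_{\mathcal{S}}(L^\mathfrak{p}_x,\Delta^\mathfrak{p}_y\langle i\rangle)\neq 0$, as claimed.
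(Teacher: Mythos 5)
Your argument is correct and follows essentially the same route as the paper: reduce via Proposition~\ref{prop65} to $\mathrm{ext}^1(L_x,Q_y\langle i\rangle)$, use the Verma flag of $Q_y$ and d\'evissage along it, and then invoke the vanishing $\Ext^1(L_x,\Delta_w)=0$ for $x\notin\mathcal{J}\cup\{w_0\}$ (the paper cites \cite[Proposition~3]{KMM3}; your citation of the fact as recalled in Subsection~\ref{s3.6} from \cite{KMM4} is the same statement). One small point: your concern about $Q_y$ possibly being non-finitely-generated is unfounded --- $Q_y$ is a quotient of $P_y$ and hence a finite length object of $\mathcal{O}_0$, with a Verma flag of finite length $|W^{\mathfrak{p}}|$; the ``infinite direct sum'' appearing in the proof of Proposition~\ref{prop65} describes its decomposition as an $\mathfrak{a}$-module, not as a $\mathfrak{g}$-module, so the d\'evissage is the ordinary finite one.
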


\begin{proof}
By Proposition~\ref{prop65}, we need to show that the assumption
$\mathrm{ext}^1(L_x,Q_y\langle i\rangle)\neq 0$
implies $x\in \mathcal{J}\cup\{w_0\}$. The module $Q_w$
has a Verma flag, by construction. From \cite[Proposition~3]{KMM3}
it follows that
$\mathrm{ext}^1(L_x,\Delta_w\langle i\rangle)\neq 0$, for $w\in W$,
implies $x\in \mathcal{J}\cup\{w_0\}$. As any non-zero extension 
from $L_x$ to $Q_y\langle i\rangle$ must induce a non-zero
extension from $L_x$ to one of the Verma subquotients of $Q_y\langle i\rangle$,
the claim of the corollary follows.
\end{proof}

\subsection{The case of standard modules which can be obtained
using projective functors}\label{s6.2}

An element $w\in W$ is called {\em ($\mathfrak p$-)special} provided that 
the subgroup $w\inv W^\mathfrak{p}w$ is parabolic, that is, there exists a parabolic subgroup 
${W}^\mathfrak{\tilde p}$ of $W$ such that 
$W^\mathfrak{p}w=w{W}^\mathfrak{\tilde p}$. 
For example, any $w\in W^{\mathfrak p}$, in particular $w_0^\mathfrak{p}$, is special.
Also, $w_0$ is special, for we can choose 
${W}^\mathfrak{\tilde p}=w_0{W}^\mathfrak{p}w_0$.

\begin{proposition}\label{prop-singular}
Let $x,y\in\mathtt{X}_\mathfrak{p}^{\mathrm{long}}$ and assume that $y$ is special. 
\begin{enumerate}[$($i$)$]
\item \label{prop-singular.1} We have
\[\Ext^1_{\mathcal S}(L_x^\mathfrak{p},\Delta_y^\mathfrak{p})\cong \Ext^1_{\cO}(L(x\cdot \lambda),\Delta(y\cdot\lambda)),\]
where $\lambda$ is an integral dominant weight which has the dot-stabilizer $W^\mathfrak{\tilde p}$. 
\item \label{prop-singular.2} Under the additional assumption $x\neq w_0$, we have
\[\dim \Ext^1_{\mathcal S}(L_x^\mathfrak{p},\Delta_y^\mathfrak{p}) = \dim \Ext^1_{\cO}(L(x\cdot \lambda),\Delta(y\cdot\lambda)) = 
[\soc\co_{w_0^\mathfrak{p}y} : L_x].\]
\end{enumerate}
\end{proposition}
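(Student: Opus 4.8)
The plan is to transport the computation to category $\cO$ via the quotient functor $\pi_\mathfrak{p}$, and there to recognize $Q_y$ (the lift to $\cO$ of $\Delta^\mathfrak{p}_y$) as the image of a Verma module in a singular block under a translation functor.

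First I would invoke Proposition~\ref{prop65} to get, for all $i\in\mathbb Z$,
\[\ext^1_\mathcal{S}(L^\mathfrak{p}_x,\Delta^\mathfrak{p}_y\langle i\rangle)\cong \ext^1(L_x,Q_y\langle i\rangle),\]
hence $\Ext^1_\mathcal{S}(L^\mathfrak{p}_x,\Delta^\mathfrak{p}_y)\cong\Ext^1(L_x,Q_y)$, with the graded information carried by the right-hand side. Thus everything reduces to understanding $Q_y$ for special $y$.

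The key step is the identification of $Q_y$. Write $W^\mathfrak{p}y=yW^\mathfrak{\tilde p}$ and let $\lambda$ be an integral dominant weight with dot-stabilizer $W^\mathfrak{\tilde p}$; let $\theta^{\mathrm{on}}\colon\cO_0\to\cO_\lambda$ and $\theta^{\mathrm{out}}\colon\cO_\lambda\to\cO_0$ be translation onto and out of the $\lambda$-wall, which are exact and biadjoint up to a graded shift. Since $w_0^\mathfrak{p}y$ is the shortest element of $W^\mathfrak{p}y$, a standard translation computation gives $\theta^{\mathrm{on}}\Delta_{w_0^\mathfrak{p}y}\cong\Delta(y\cdot\lambda)$, so that $\theta^{\mathrm{out}}\Delta(y\cdot\lambda)\cong\Theta\,\Delta_{w_0^\mathfrak{p}y}$ for the projective functor $\Theta=\theta^{\mathrm{out}}\theta^{\mathrm{on}}$ on $\cO_0$. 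The point is then that $\Theta\,\Delta_{w_0^\mathfrak{p}y}\cong Q_y$ up to a graded shift: the module $\Theta\,\Delta_{w_0^\mathfrak{p}y}$ has a Verma flag, has simple top $L_y$ (using $y=\overline y$, which holds because $y$ is the longest element of $W^\mathfrak{p}y=yW^\mathfrak{\tilde p}$), hence is a quotient of $P_y$, and its Verma subquotients are the $\Delta_z$ with $z\in yW^\mathfrak{\tilde p}=W^\mathfrak{p}y$, each occurring once with the expected shifts — which is precisely the Verma flag of $Q_y$; a quotient of $P_y$ realizing a prescribed Verma flag is unique, so the two coincide. (This is essentially a reformulation of \cite[Proposition~2.9]{MS}.) With this in hand, biadjointness gives
\[\Ext^1_\mathcal{S}(L^\mathfrak{p}_x,\Delta^\mathfrak{p}_y)\cong\Ext^1(L_x,\theta^{\mathrm{out}}\Delta(y\cdot\lambda))\cong\Ext^1_{\cO}\bigl(\theta^{\mathrm{on}}L_x,\Delta(y\cdot\lambda)\bigr),\]
and since $\theta^{\mathrm{on}}L_x$ is $L(x\cdot\lambda)$ (for the appropriate representative) or $0$ (in which case both sides vanish), this yields part~(i); tracking the shift through $\theta^{\mathrm{on}},\theta^{\mathrm{out}}$ gives the graded version.

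For part~(ii), with $x\neq w_0$ the first equality is part~(i), and for the second I would compare $\Ext^1_{\cO}(L(x\cdot\lambda),\Delta(y\cdot\lambda))$ with $\Ext^1(L_x,\Delta_{w_0^\mathfrak{p}y})$ using $\Delta(y\cdot\lambda)=\theta^{\mathrm{on}}\Delta_{w_0^\mathfrak{p}y}$ together with Proposition~\ref{extandsoc} and Theorem~\ref{thm75}, which express both sides through $\soc(\co_{w_0^\mathfrak{p}y})$; this gives $\dim\Ext^1(L_x,\Delta_{w_0^\mathfrak{p}y})=[\soc\,\co_{w_0^\mathfrak{p}y}:L_x]$.

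The hard part will be the identification of $Q_y$ in the key step: one has to match the Verma flag of $Q_y$, which is intrinsically indexed by the \emph{left} coset $W^\mathfrak{p}y$, with that of the translated Verma module, which comes indexed by the \emph{right} coset $yW^\mathfrak{\tilde p}$ — and it is exactly at this matching that speciality of $y$ is used. Once that is settled, everything else is adjunction plus (mildly tedious) bookkeeping of graded shifts.
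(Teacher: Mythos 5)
Your overall route coincides with the paper's: reduce to $\Ext^1(L_x,Q_y)$ via Proposition~\ref{prop65}, identify $Q_y$ with $\theta_{\tilde w_0}\Delta_w=\theta^{\mathrm{out}}\theta^{\mathrm{on}}\Delta_w$ for $w$ in the coset $W^\mathfrak{p}y=yW^{\mathfrak{\tilde p}}$ by matching Verma flags of quotients of $P_y$ (the paper applies $\theta_{\tilde w_0}$ to $\Delta_y$ where you use $\Delta_{w_0^\mathfrak{p}y}$; this is immaterial since $\theta^{\mathrm{on}}\Delta_w$ depends only on the coset, up to shift), and then use biadjointness. Your treatment of part (ii) also matches the paper's, which passes from $[\soc\Delta(\lambda)/\Delta(y\cdot\lambda):L(x\cdot\lambda)]$ to $[\soc\co_{w_0^\mathfrak{p}y}:L_x]$ via \cite[Proposition~15]{KMM3}.

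The one step that fails is your parenthetical ``or $0$ (in which case both sides vanish).'' When $\theta^{\mathrm{on}}L_x=0$, i.e.\ when $x$ is \emph{not} the longest element of the right coset $xW^{\mathfrak{\tilde p}}$, the adjunction does force $\Ext^1_{\mathcal S}(L_x^\mathfrak{p},\Delta_y^\mathfrak{p})=0$, but the right-hand side $\Ext^1_{\cO}(L(x\cdot\lambda),\Delta(y\cdot\lambda))$ involves the simple $L(x\cdot\lambda)=L(\overline{x}\cdot\lambda)$, which is never zero, and this extension space need not vanish. Concretely, take $\mathfrak{sl}_3$ with $W^\mathfrak{p}=\{e,s\}$, $y=w_0$ (so $W^{\mathfrak{\tilde p}}=\{e,t\}$) and $x=s\in\mathtt{X}_\mathfrak{p}^{\mathrm{long}}$: then $\theta^{\mathrm{on}}L_s=0$ and indeed $\Ext^1_{\mathcal S}(L_s^\mathfrak{p},\Delta_{w_0}^\mathfrak{p})=0$ (see the second table in Subsection~\ref{s5.1}), whereas $L(s\cdot\lambda)=L(st\cdot\lambda)$ and $\Ext^1_{\cO}(L(st\cdot\lambda),\Delta(ts\cdot\lambda))\cong\mathbb{C}$ by Theorem~\ref{thm75}. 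The point is that membership in $\mathtt{X}_\mathfrak{p}^{\mathrm{long}}$ is a condition on the left coset $W^\mathfrak{p}x$ and does not imply $x=\overline{x}$ with respect to $W^{\mathfrak{\tilde p}}$. To be fair, the paper's own proof makes the same unjustified identification $\theta^{\mathrm{on}}L_x\cong L(x\cdot\lambda)$ ``since $x\in\mathtt{X}_\mathfrak{p}^{\mathrm{long}}$,'' so you have reproduced rather than introduced the problem; but since you explicitly isolated the case $\theta^{\mathrm{on}}L_x=0$, you should resolve it honestly: either add the hypothesis that $x$ is the longest element of $xW^{\mathfrak{\tilde p}}$ (which restores part (i) and, via your argument, the graded refinement), or state the conclusion with $\theta^{\mathrm{on}}L_x$ in place of $L(x\cdot\lambda)$.
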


\begin{proof}
Let $x,y$ be as above and let $\mathfrak{\tilde p}$ be such that $W^{\mathfrak p}y=y{W}^\mathfrak{\tilde p}$. Let $\tilde{w}_0$ be the longest element in ${W}^\mathfrak{\tilde p}$.
We have
$Q_w\cong \theta_{\tilde{w}_0}\Delta_{w}$, since both sides are characterized as the quotient of $P_w$ with a filtration where the factors are exactly $\Delta_{z}\langle -\ell(w)+\ell(z) \rangle$ for $z\in W^\mathfrak{p}w=w{W}^\mathfrak{\tilde p}$ (with multiplicity one). 
Let $\lambda$ be a dominant integral
weight for which ${W}^\mathfrak{\tilde p}$ is the dot-stabilizer.
Let $\mathcal{O}_\lambda$ be the corresponding block of $\mathcal{O}$.
Consider the corresponding projective functors
\begin{displaymath}
\theta_{\tilde{w}_0}^\mathrm{on}:\mathcal{O}_0\to\mathcal{O}_\lambda\quad
\text{ and }\quad
\theta_{\tilde{w}_0}^\mathrm{out}:\mathcal{O}_\lambda\to\mathcal{O}_0
\end{displaymath}
of translation onto and out of the ${W}^\mathfrak{\tilde p}$-wall, respectively.
These functors are biadjoint and 
$\theta_{\tilde{w}_0}\cong 
\theta_{\tilde{w}_0}^\mathrm{out}\theta_{\tilde{w}_0}^\mathrm{on}$. 
In particular, for  $x\in \mathtt{X}_\mathfrak{p}^{\mathrm{long}}$, we have
\begin{displaymath}
\mathrm{Ext}^1(L_x,Q_w)\cong
\mathrm{Ext}^1(\theta_{\tilde{w}_0}^\mathrm{on}L_x,
\theta_{\tilde{w}_0}^\mathrm{on}\Delta_w).
\end{displaymath}
Since $x\in \mathtt{X}_\mathfrak{p}^{\mathrm{long}}$, we have $\theta_{\tilde{w}_0}^\mathrm{on}L_x\cong L(x\cdot \lambda)$ in $\mathcal{O}_\lambda$. We also have
$\theta_{\tilde{w}_0}^\mathrm{on}\Delta_w\cong \Delta(w\cdot\lambda)$ in $\mathcal{O}_\lambda$. 
The claimed evaluation of $\dim \Ext^1_{\mathcal S}(L_x^\mathfrak{p},\Delta_y^\mathfrak{p})$ 
now follows from Proposition~\ref{prop65}.  

Now we prove the second equality in the second statement, where the first equality is obtained from the first claim. If $x\neq w_0$ 
then the proof of Proposition~\ref{extandsocinS} (or of Proposition~\ref{extandsoc}) identifies 
the value $\dim \Ext^1_{\cO}(L(x\cdot \lambda),\Delta(y\cdot\lambda))$ with the value
$[\soc\Delta(\lambda)/\Delta(y\cdot \lambda) : L(x\cdot\lambda)]$. The latter agrees with $[\soc\co_{w_0^{\mathfrak p}y} : L_x]$ 
by \cite[Proposition 15]{KMM3} since 
$w_0^{\mathfrak p}y=y\tilde{w}_0$ is the shortest element in $W^{\mathfrak p}y=y{W}^\mathfrak{\tilde p}$.
\end{proof}

\subsection{A type $A$ formula}

By Subsection~\ref{s3.35}, Proposition~\ref{prop-singular} completely computes the first extension between simple and standard in $\mathcal S$-subcategories in type $A$.

\begin{proposition}\label{prop-singularA}
Let $x,y\in\mathtt{X}_\mathfrak{p}^{\mathrm{long}}$ with $y$ special and 
assume we are in type $A$. Then 
\begin{displaymath}
\dim\mathrm{Ext}_{\mathcal S}^{1}(L_x^\mathfrak{p},\Delta_y^\mathfrak{p})=
\begin{cases}
\mathbf{c}(\overline{x}\underline{y})-\mathrm{rank}(W^\mathfrak{p}), & \overline{x}=w_0;\\
1, & \overline{x}\in \Phi(\mathbf{BM}(\underline{y}));\\
0, & \text{otherwise}. 
\end{cases}
\end{displaymath}
\end{proposition}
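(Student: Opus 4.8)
The plan is to derive Proposition~\ref{prop-singularA} as a direct specialization of Proposition~\ref{prop-singular} combined with the type $A$ computation of first extensions in singular blocks recorded in Theorem~\ref{thm75}. Since we are assuming $y$ is special and we are in type $A$, Proposition~\ref{prop-singular}\eqref{prop-singular.1} gives an isomorphism $\Ext^1_{\mathcal S}(L_x^\mathfrak{p},\Delta_y^\mathfrak{p})\cong \Ext^1_{\cO}(L(x\cdot\lambda),\Delta(y\cdot\lambda))$, where $\lambda$ is an integral dominant weight whose dot-stabilizer is the parabolic subgroup $W^{\tilde{\mathfrak p}}$ satisfying $W^\mathfrak{p}y=y W^{\tilde{\mathfrak p}}$. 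The right-hand side is exactly the quantity computed by Theorem~\ref{thm75} (applied with that $\lambda$), so the whole proof amounts to translating the three cases in Theorem~\ref{thm75} into the three cases in the statement.

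Concretely, the steps are as follows. First I would invoke Proposition~\ref{prop-singular}\eqref{prop-singular.1} to replace the $\mathcal S$-side extension group by $\Ext^1_{\cO}(L(x\cdot\lambda),\Delta(y\cdot\lambda))$. Second, I would apply Theorem~\ref{thm75} to this last group; its output is $\mathbf{c}(\overline{x}\underline{y})-\mathrm{rank}(W^\lambda)$ when $\overline{x}=w_0$, it is $1$ when $\overline{x}\in\Phi(\mathbf{BM}(\underline{y}))$, and it is $0$ otherwise. Third, I would identify the combinatorial data on the $\cO_\lambda$-side with the data appearing in the proposition: since $W^\lambda = W^{\tilde{\mathfrak p}}$, we have $\mathrm{rank}(W^\lambda)=\mathrm{rank}(W^{\tilde{\mathfrak p}})$, and since $W^\mathfrak{p}y = y W^{\tilde{\mathfrak p}}$ these two parabolic subgroups are conjugate, hence have equal rank, so $\mathrm{rank}(W^\lambda)=\mathrm{rank}(W^\mathfrak{p})$; this matches the first case. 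The bars and underlines in Theorem~\ref{thm75} refer to longest and shortest elements in $x W^\lambda$ and $y W^\lambda$ respectively, which (using $W^\lambda=W^{\tilde{\mathfrak p}}$ and the coset identity) are precisely $\overline{x}$ and $\underline{y}$ in the proposition's notation relative to $W^{\tilde{\mathfrak p}}$, so the second and third cases match verbatim.

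One point that deserves a sentence of care, and which I expect to be the only real friction, is the bookkeeping of which stabilizer governs the bars and underlines: in the statement of Proposition~\ref{prop-singularA} the notation $\overline{x}$, $\underline{y}$, and $\mathbf{BM}$ must be read with respect to $W^{\tilde{\mathfrak p}}$ (equivalently, with respect to the dot-stabilizer $W^\lambda$ of the singular weight produced in Proposition~\ref{prop-singular}), not with respect to $W^\mathfrak{p}$ itself; since $x,y\in\mathtt{X}_\mathfrak{p}^{\mathrm{long}}$ this is a genuine translation, and I would flag it explicitly so the reader does not confuse $W^\mathfrak p$-cosets with $W^{\tilde{\mathfrak p}}$-cosets. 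Apart from this identification, the argument is essentially a one-line chain: Proposition~\ref{prop-singular} reduces to a singular-block computation in $\cO$, and Theorem~\ref{thm75} is exactly that computation in type $A$. I would therefore write the proof as: apply Proposition~\ref{prop-singular}\eqref{prop-singular.1}, then apply Theorem~\ref{thm75}, then note $\mathrm{rank}(W^\lambda)=\mathrm{rank}(W^{\mathfrak p})$ by conjugacy of the stabilizers, and read off the three cases.

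\begin{proof}
Let $\tilde{\mathfrak p}$ be such that $W^{\mathfrak p}y = y W^{\tilde{\mathfrak p}}$, which exists since $y$ is special, and let $\lambda$ be an integral dominant weight whose dot-stabilizer $W^\lambda$ equals $W^{\tilde{\mathfrak p}}$. By Proposition~\ref{prop-singular}\eqref{prop-singular.1} we have
\begin{displaymath}
\dim\mathrm{Ext}_{\mathcal S}^{1}(L_x^\mathfrak{p},\Delta_y^\mathfrak{p}) =
\dim\mathrm{Ext}_{\cO}^{1}(L(x\cdot\lambda),\Delta(y\cdot\lambda)).
\end{displaymath}
The right-hand side is computed by Theorem~\ref{thm75}, with the bars and underlines taken with respect to $W^\lambda = W^{\tilde{\mathfrak p}}$. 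Since $W^{\mathfrak p}$ and $W^{\tilde{\mathfrak p}}$ are conjugate in $W$ (via $y$), they have the same rank, so $\mathrm{rank}(W^\lambda) = \mathrm{rank}(W^{\mathfrak p})$. Because $x,y\in\mathtt{X}_\mathfrak{p}^{\mathrm{long}}$, the longest element $\overline{x}$ of $x W^{\tilde{\mathfrak p}}$ and the shortest element $\underline{y}$ of $y W^{\tilde{\mathfrak p}}$ are exactly the elements denoted $\overline{x}$ and $\underline{y}$ in the statement, and likewise $\mathbf{BM}(\underline{y})$ is taken as in the statement. Substituting these identifications into the three cases of Theorem~\ref{thm75} yields the three cases of the proposition.
\end{proof}
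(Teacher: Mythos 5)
Your proposal is correct and follows exactly the route the paper intends: the paper gives no written proof for Proposition~\ref{prop-singularA}, stating only that it is a consequence of Proposition~\ref{prop-singular} together with Subsection~\ref{s3.35} (i.e.\ Theorem~\ref{thm75}), which is precisely the chain you spell out, including the necessary observations that $\mathrm{rank}(W^\lambda)=\mathrm{rank}(W^{\tilde{\mathfrak p}})=\mathrm{rank}(W^\mathfrak{p})$ by conjugacy and that $\overline{x}$, $\underline{y}$, $\mathbf{BM}$ are to be read relative to $W^{\tilde{\mathfrak p}}=W^\lambda$.

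One small polish: the clause ``Because $x,y\in\mathtt{X}_\mathfrak{p}^{\mathrm{long}}$'' before the identification of $\overline{x}$ and $\underline{y}$ is not actually doing any work --- $x$ need not be $\tilde{\mathfrak p}$-special, so $x$ itself has no privileged position in $xW^{\tilde{\mathfrak p}}$; the identification holds simply because the statement's notation is defined (only) in Subsection~\ref{s3.35} with respect to $W^\lambda$. You could drop that clause, or replace it with a remark that $\underline{y}=w_0^{\mathfrak p}y=y\tilde w_0$ (as in the proof of Proposition~\ref{prop-singular}) since $y$ is special, while for $x$ the bar is an honest passage to the $W^{\tilde{\mathfrak p}}$-coset.
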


The graded version of this claim is obtain in the obvious way 
using the shifts described in Subsection~\ref{s3.4}.

\section{Examples}\label{s5}

\subsection{$\mathbf{sl}_3$-example}\label{s5.1}

Consider the case of the Lie algebra $\mathfrak{sl}_3$.
In this case we have $W=S_3=\{e,s,t,st,ts,w_0=sts=tst\}$.
Let $\mathfrak{p}$ be such that $W^\mathfrak{p}=\{e,s\}$. With such a choice, we have
\begin{displaymath}
\mathtt{X}_\mathfrak{p}^{\mathrm{long}}=\{s,st,w_0\}
\quad\text{ and }\quad
\mathtt{X}_\mathfrak{p}^{\mathrm{short}}=\{e,t,ts\}
\end{displaymath}
and the Hasse diagrams for the (opposite of the) Bruhat order on 
$W$, $\mathtt{X}_\mathfrak{p}^{\mathrm{long}}$ and 
$\mathtt{X}_\mathfrak{p}^{\mathrm{short}}$ are as follows:
\begin{displaymath}
\xymatrix@R=3mm@C=3mm{
&e\ar@{-}[dr]\ar@{-}[dl]&\\
s\ar@{-}[drr]\ar@{-}[d]&&t\ar@{-}[dll]\ar@{-}[d]\\
st\ar@{-}[dr]&&ts\ar@{-}[dl]\\
&w_0&
}\qquad\qquad
\xymatrix@R=3mm@C=3mm{s\ar@{-}[d]\\st\ar@{-}[d]\\w_0}\qquad\qquad
\xymatrix@R=3mm@C=3mm{e\ar@{-}[d]\\t\ar@{-}[d]\\ts}
\end{displaymath}

If we denote $L_x^\mathfrak{p}$ simply by $x$, then the 
subquotients of the graded filtrations of
the indecomposable projectives in $\mathcal{S}_0$ are as follows:
\begin{displaymath}
\xymatrix@R=3mm@C=3mm{&P_{w_0}^\mathfrak{p}&\\&w_0&\\&st&\\w_0&s&w_0\\
st&&st\\w_0&s&w_0\\&st&\\&w_0&}\qquad\qquad
\xymatrix@R=3mm@C=3mm{&P_{st}^\mathfrak{p}&\\&st&\\w_0&s&\\st&st&\\w_0&w_0&s\\&st&\\&w_0&}\qquad\qquad
\xymatrix@R=3mm@C=3mm{&P_s^\mathfrak{p}&\\&s\ar@{-}[d]&\\&st\ar@{-}[dl]\ar@{-}[dr]&
\\w_0\ar@{-}[dr]&&s\ar@{-}[dl]\\&st\ar@{-}[d]&\\&w_0&} 
\end{displaymath}
The (graded and unique) Loewy filtrations of the proper standard modules
are as follows:
\begin{displaymath}
\xymatrix@R=3mm@C=3mm{\overline{\Delta}_{w_0}^\mathfrak{p}\\w_0}\qquad\qquad
\xymatrix@R=3mm@C=3mm{\overline{\Delta}_{st}^\mathfrak{p}\\st\ar@{-}[d]\\w_0}\qquad\qquad
\xymatrix@R=3mm@C=3mm{\overline{\Delta}_s^\mathfrak{p}\\s\ar@{-}[d]\\st\ar@{-}[d]\\w_0} 
\end{displaymath} 
We note that all proper standard modules are multiplicity-free and hence 
the corresponding module diagrams are well-defined. This is not the case 
for the indecomposable projectives $P_{w_0}^\mathfrak{p}$ and
$P_{st}^\mathfrak{p}$ which are not even graded multiplicity-free.
The projective $P_{s}^\mathfrak{p}$ is not multiplicity-free but it is
graded multiplicity-free and hence its module diagram is well-defined
as well as the algebra $A^\mathfrak{p}$ is positively graded.

The following table contains information on the {\color{magenta}dimension}
and the {\color{teal}degree shift} for the extension spaces from a simple
object to a proper standard object in the format $({\color{magenta}d},{\color{teal}m})$
for the formula ${\color{magenta}\dim}\,\mathrm{ext}^1_{\mathcal{S}_0}
(L_x^\mathfrak{p},\overline{\Delta}^\mathfrak{p}_y\langle{\color{teal}m}\rangle)$:
\begin{displaymath}
\begin{array}{c||c|c|c}
x\setminus y&s&st&w_0\\
\hline\hline
s&-&
({\color{magenta}1},{\color{teal}-1})&
-\\ \hline
st&-&
-&
({\color{magenta}1},{\color{teal}-1})\\ \hline
w_0&({\color{magenta}2},{\color{teal}0})&
({\color{magenta}2},{\color{teal}-1})&
({\color{magenta}1},{\color{teal}-2})
\end{array}
\end{displaymath}

Note that $s$ and $w_0$ are special while $st$ is not.
The following table contains information on the {\color{magenta}dimension}
and the {\color{teal}degree shift} for the extension spaces from a simple
object to a standard object in the format $({\color{magenta}d},{\color{teal}m})$
for the formula ${\color{magenta}\dim}\,\mathrm{ext}^1_{\mathcal{S}_0}
(L_x^\mathfrak{p},{\Delta}^\mathfrak{p}_y\langle{\color{teal}m}\rangle)$:
\begin{displaymath}
\begin{array}{c||c|c|c}
x\setminus y&s&st&w_0\\
\hline\hline
s&-&
({\color{magenta}1},{\color{teal}1})&
-\\ \hline
st&-&
-&
({\color{magenta}1},{\color{teal}1})\\ \hline
w_0&({\color{magenta}1},{\color{teal}2})&
({\color{magenta}1},{\color{teal}1})&
-
\end{array}
\end{displaymath}

\subsection{$\mathbf{sl}_4$-example}\label{s5.2}

The Lie algebra $\mathbf{sl}_4$ is the smallest Lie algebra for which 
there are non-trivial Kazhdan-Lusztig polynomials. These non-trivial
KL-polynomials also contribute to a non-trivial extension from a
simple module to a Verma module. 

We have $W=S_4$ and let $s_1$, $s_2$ and $s_3$ be the simple 
reflections with the corresponding Dynkin diagram
\begin{displaymath}
\xymatrix{s_1\ar@{-}[rr]&&s_2\ar@{-}[rr]&&s_3}.
\end{displaymath}
As pointed out in \cite[Subsection~1.3]{KMM3}, we have 
the following fact (which we present here in the graded version):
\begin{displaymath}
\mathrm{ext}^1(L_{s_2w_0}\langle-3\rangle,\Delta_{s_2}\langle-1\rangle)\cong\mathbb{C}. 
\end{displaymath}
Note that $s_2w_0$ is a longest representative in the cosets
${}_{W^\mathfrak{p}}\setminus W$ for the choices of a parabolic 
subgroups $W^\mathfrak{p}$ in $W$ given by the following subsets of simple roots:
\begin{displaymath}
\varnothing,\quad  \{s_1\},\quad \{s_2\}, \quad \{s_1,s_2\}. 
\end{displaymath}
We denote the corresponding parabolic subalgebras 
by $\mathfrak{p}_i$, for $i=1,2,3,4$. Consequently, we have:
\begin{gather*}
\mathrm{ext}^1_{\mathcal{S}^\mathfrak{p}_1}
(L_{s_2w_0}^{\mathfrak{p}_1}\langle-3\rangle,
\overline{\Delta}_{s_2}^{\mathfrak{p}_1}\langle-1\rangle)\cong\mathbb{C},\\
\mathrm{ext}^1_{\mathcal{S}^\mathfrak{p}_2}
(L_{s_2w_0}^{\mathfrak{p}_2}\langle-3\rangle,
\overline{\Delta}_{s_1s_2}^{\mathfrak{p}_2}\langle-2\rangle)\cong\mathbb{C},\\
\mathrm{ext}^1_{\mathcal{S}^\mathfrak{p}_3}
(L_{s_2w_0}^{\mathfrak{p}_3}\langle-3\rangle,
\overline{\Delta}_{s_3s_2}^{\mathfrak{p}_3}\langle-2\rangle)\cong\mathbb{C},\\
\mathrm{ext}^1_{\mathcal{S}^\mathfrak{p}_4}
(L_{s_2w_0}^{\mathfrak{p}_4}\langle-3\rangle,
\overline{\Delta}_{s_1s_3s_2}^{\mathfrak{p}_4}\langle-3\rangle)\cong\mathbb{C}. 
\end{gather*}

\vspace{2mm}

\noindent
H.~K.: Department of Mathematics, Uppsala University, Box. 480,
SE-75106, Uppsala,\\ SWEDEN, email: {\tt hankyung.ko\symbol{64}math.uu.se}

\noindent
V.~M.: Department of Mathematics, Uppsala University, Box. 480,
SE-75106, Uppsala,\\ SWEDEN, email: {\tt mazor\symbol{64}math.uu.se}

\end{document}